\newtheorem*{yo}{Yoneda lemma for formal group schemes}
\newtheorem{logo}{Proposition}[section]
\newtheorem{homo}[logo]{Proposition}
\newtheorem{hon}[logo]{Proposition}
\newtheorem{ass}[logo]{Lemma}
\newtheorem{glob}[logo]{Proposition}
\newtheorem{ufix_suf}{Proposition}[section]
\newtheorem{mor_ufix}[ufix_suf]{Proposition}
\newtheorem{ufix_prod}[ufix_suf]{Proposition}
\newtheorem{ker_zero}[ufix_suf]{Lemma}
\newtheorem{ufix_ex}{Theorem}
\newtheorem{glufix}[ufix_suf]{Proposition}
\newtheorem{spf}{Proposition}[section]
\newtheorem{hom_wr}[spf]{Proposition}
\newtheorem{log_wr}[spf]{Proposition}
\newtheorem{tran_wr}[spf]{Proposition}
\newtheorem{wr_sch}{Proposition}[section]
\newtheorem{mod_gr}[wr_sch]{Proposition}
\newtheorem{tor_mod}[wr_sch]{Proposition}
\newtheorem{fix_sch}[wr_sch]{Proposition}
\newtheorem{tam_iso}[wr_sch]{Proposition}
\newtheorem{cc_iso}[wr_sch]{Proposition}
\newtheorem{wr_mfg}[wr_sch]{Proposition}
\newtheorem{wr_mgs}{Proposition}[section]
\newtheorem*{wr_log}{Corollary}
\newtheorem{linco}[wr_mgs]{Proposition}
\newtheorem{q_mat}[ufix_ex]{Theorem}
\newtheorem*{q_mat+}{Corollary}
\newtheorem{repr}[ufix_ex]{Theorem}
\newtheorem{sm_ram}{Lemma}[section]
\newtheorem{tip3}[sm_ram]{Proposition}
\newtheorem{v_unr}[sm_ram]{Lemma}
\newtheorem{edlem}[sm_ram]{Lemma}
\newtheorem{eigen}[sm_ram]{Proposition}
\newtheorem{v_ram}[sm_ram]{Lemma}
\newtheorem{tpt3}[ufix_ex]{Theorem}
\newtheorem*{tpt3+}{Corollary}
\newtheorem{isrefg}[sm_ram]{Lemma}
\newtheorem{isreto}[sm_ram]{Proposition}
\newtheorem{tip41}{Proposition}[section]
\newtheorem{tpt4}[tip41]{Proposition}
\newtheorem{tip51}{Proposition}[section]
\newtheorem{ms_pabz}[tip51]{Lemma}
\newtheorem{tpt5}[ufix_ex]{Theorem}
\newtheorem{appl}[tip51]{Proposition}
\newtheorem*{isfoco}{Corollary}
\newcommand{\Z}{\mathbb{Z}}
\newcommand{\Q}{\mathbb{Q}}
\newcommand{\G}{\mathbb{G}}
\newcommand{\F}{\mathbb{F}}
\newcommand{\LL}{\mathbb{L}_m}
\newcommand{\id}{{\rm id}}
\newcommand{\Gal}{{\rm Gal}}
\newcommand{\Hom}{{\rm Hom}}
\newcommand{\Aut}{{\rm Aut}}
\newcommand{\Ima}{{\rm Im\ }}
\newcommand{\M}{{\rm M}}
\newcommand{\Sp}{{\rm Sp\ }}
\newcommand{\Spf}{{\rm Spf\ }}
\newcommand{\GL}{{\rm GL}}
\newcommand{\Ker}{{\rm Ker}}
\newcommand{\Kera}{{\rm Ker\ }}
\newcommand{\rk}{{\rm rk}}
\newcommand{\caa}{{\cal A}}
\newcommand{\cab}{{\cal B}}
\newcommand{\cac}{{\cal C}}
\newcommand{\cae}{{\cal E}}
\newcommand{\cag}{{\cal G}}
\newcommand{\cah}{{\cal H}}
\newcommand{\caj}{{\cal J}}
\newcommand{\cai}{{\cal I}}
\newcommand{\call}{{\cal L}}
\newcommand{\can}{{\cal N}}
\newcommand{\cao}{{\cal O}}
\newcommand{\car}{{\cal R}}
\newcommand{\cat}{{\cal T}}
\newcommand{\cau}{{\cal U}}
\newcommand{\cax}{{\cal X}}
\newcommand{\cay}{{\cal Y}}
\newcommand{\caz}{{\cal Z}}
\newcommand{\wh}{\widehat}
\newcommand{\bt}{\blacktriangle}
\renewcommand{\phi}{\varphi}
\renewcommand{\epsilon}{\varepsilon}
\begin{document}

\title{Formal completions of N\'eron models for algebraic tori}
\author{Oleg Demchenko\thanks{The author wishes to thank JSPS
for financial support and the Mathematical Institute of Tohoku
University for its hospitality.}, Alexander Gurevich\thanks
{The author was partially supported by the Minerva Foundation
through the Emmi Noether Institute, the Ministry of
Absorption, Israel, and the grant 3-3578 from the Ministry
of Science, Israel, and the Russian Foundation for Basic
Research.}, Xavier Xarles\thanks{The author was partially
supported by the grant MTM 2006-11391 from DGI.}}

\maketitle

\begin{abstract}
We calculate the formal group law which represents the completion
of the N\'eron model for an algebraic torus over $\Q$ split
in a tamely ramified abelian extension. To that end, we introduce
an analogue of the fixed part of a formal group law with respect
to a group action and give a method to compute its Honda type.
\end{abstract}


\noindent
\textit{Key words:}
algebraic tori; N\'{e}ron models; formal groups.

\section*{Introduction}

\bigskip
An explicit description of a formal group law which
represents the completion of the N\'eron model for an elliptic
curve over $\Q$ was given by Honda with the aid of the $L$-series
of the curve considered (see \cite{Ho1}). The first attempt to
construct such a formal group law for algebraic tori was made by
Deninger and Nart \cite{DN}. They consider a torus over $\Q$ which
is split over an abelian tamely ramified extension $K$ of $\Q$ and
define a formal group law in terms of the Galois representation
corresponding to this torus. Employing Honda's isomorphism
criterion for formal group laws (see \cite{Ho}) they show that
after inverting some primes, the formal group obtained becomes
isomorphic to the completion of the N\'eron model for the torus.

In our article, we work in the same setting as Deninger and Nart
and calculate a formal group law which represents the completion
of the N\'eron model. Our main tools are Edixhoven's
interpretation of the N\'eron model as a maximal fixed subscheme
\cite{Ed}, Honda's theory of formal group laws \cite{Ho} and a
theory of universal fixed pairs for a formal group law with a
group action which is introduced within the scope of this note.

The outline of the paper is as follows. In Section~1, we review
the notion of the formal completion of a group scheme and
introduce related notation. We proceed with the definition and
basic properties of formal group laws accompanied with the main
results of Honda's theory (Section~2). In Section~3, formal group
laws supplied with a group action are studied. We introduce
the notion of a universal fixed pair widely employed in subsequent
constructions and find universal fixed pairs for certain formal
group laws over the ring of integers in an unramfied extension of
$\Q_p$ (Theorem~\ref{ufix_ex}).
Moreover we establish a sufficient condition for a formal group law
over $\Z$ to appear in a universal fixed pair. The construction
of the Weil restriction of a formal group law is considered in
Section~4. 
We present an explicit expression for the logarithm of the Weil
restriction of the multiplicative formal group law and further
compute its Honda's type.
Another Honda's type of the same formal group law is computed
in \cite{Ib}. An advantage of our type is that its coefficients
belong to $\Z$, and thus it can be used for finding a universal
fixed pair in the global case.

Section~5 is devoted to the definition of N\'eron model and
its main properties. Our reference for this topic is \cite{BLR}.
We also recall Edixhoven's result (see \cite{Ed})
which asserts that if $T$ is a torus over $L$ and $K/L$ is a
tamely ramified Galois extension, then the N\'eron model for $T$
is isomorphic to the maximal fixed subscheme in the Weil
restriction of the N\'eron model for $T_K$ with respect to the
natural action of $\Gal(K/L)$.

In Section~6, we consider a $d$-dimensional torus $T$ over $L$
split over a Galois extension $K$ of $L$ and prove that the
cotangent space of the Weil restriction of the N\'eron model for
$T_K$ is isomorphic to
$$(\cax\otimes_\Z\cao_L)\otimes_{\cao_L}\Hom_{\cao_L}(\cao_K,\cao_L)$$
as $\Gal(K/L)$-module, where $\cax$ is the Galois module of
$K$-characters corresponding to $T$ (Theorem~\ref{q_mat}).
Further we show that if $K/L$ is tamely ramified, then
the Weil restriction $\Phi$ of the direct sum of $d$ copies of
the multiplicative formal group law provided with the Galois
action whose linear part is given in Theorem~\ref{q_mat} admits
a universal fixed pair.
Since the formal completion commutes with taking the maximal
fixed subscheme and Weil restriction, we conclude that the formal
group law which appears in this universal fixed pair represents
the completion of the N\'eron model for $T$ (Theorem~\ref{repr}).
Further we consider several special cases and apply the techniques
of Sections~3 and 4 in order to calculate the formal group law
representing the completion of the N\'eron model explicitly.

The case where $L=\Q_p$ and $K$ is an abelian tamely ramified
extension of $\Q_p$ is treated in Section 7. We prove that
the formal completion is isomorphic to the direct sum of
a $p$-divisible group whose dimension is equal to that of
the maximal subtorus of $T$ split over the maximal unramified
subextension of $K$, and several copies of the additive formal
group schemes (Theorem~\ref{tpt3}). If $K/\Q_p$ is unramified,
we get the same answer as in \cite{DN}.
In another particular case, when $K/\Q_p$ is totally ramified,
the formal completion turns out to be isomorphic to the direct
sum of several copies of the multiplicative and additive formal
group schemes. A similar result for the reduction
of the N\'eron model for such tori appears in \cite{NX}.
We apply Theorem~\ref{tpt3} to show that in the case considered,
the formal completion of the N\'eron model is uniquely determined
by its reduction, and give an example of two tori such that
their N\'eron models have isomorphic completions, but
non-isomorphic reductions.

Section~8 is devoted to one-dimensional tori over $L=\Q$ which
are split over a tamely ramified quadratic extension $K$ of $\Q$.
We construct two formal group laws representing the completion
of the N\'eron model (for one of them this was proven in \cite{DN})
and reprove Honda's theorem (see \cite{Ho1}) which asserts that
the formal group law obtained from the Dirichlet series of $K/\Q$
is strongly isomorphic to the formal group law
$F_q(x,y)=x+y+\sqrt q xy$, where $q$ is the discriminant of
$K/\Q$. The case where $L=\Q$ and $K$ is an abelian tamely
ramified extension of $\Q$, is considered in Section~9.
Due to the Kronecker-Weber theorem, one can suppose that
$K=\Q(\xi)$, where $\xi$ is a $q$-th primitive root of unity,
and $q$ is the product of distinct primes. We express
the coefficients of the corresponding formal group law
in terms of the images of the Frobenius automorphisms
with respect to the Galois representation in the character
group of $T$ (Theorem~\ref{tpt5}). This is our main result.
As an application, we show that a torus over $\Q$ split over
an abelian tamely ramified extension of $\Q$ is determined
up to isomorphism by the completion of its N\'eron model.
A similar result for Jacobians and elliptic curves was proven
in \cite{Na}.

Throughout the paper, we use the following matrix notation.\\
\noindent
If $U=\{a_{i,j}\}_{0\leq i\leq m-1;0\leq j\leq n-1}$,
$V=\{b_{i',j'}\}_{0\leq i'\leq m'-1;0\leq j'\leq n'-1}$
are matrices, their Kronecker product $U\otimes V$ is a matrix
$W=\{c_{k,l}\}_{0\leq k\leq mm'-1;0\leq l\leq nn'-1}$,
where $c_{i'm+i,j'n+j}=a_{i,j}b_{i',j'}$ for
$0\leq i\leq m-1,0\leq j\leq n-1,0\leq i'\leq m'-1,0\leq j'\leq n'-1$.
Notice that $(U\otimes V)(U'\otimes V')=(UU'\otimes VV')$
and $(U\otimes V)^T=U^T\otimes V^T$.

To make calculations with Kronecker product easier, we sometimes
employ the following ``matrix-of-matrices'' representation.
Let $R$ be a ring and let $U^{(i,j)}, 0\leq i,j\leq n-1$
be matrices from $\M_m(R)$ and
$U^{(i,j)}=\{a_{k,l}^{(i,j)}\}_{0\leq k,l\leq m-1}$.
Then the correspondence
$$
\{U^{(i,j)}\}\mapsto \{c_{s,t}\}_{0\leq s,t\leq mn-1},\quad
c_{im+k,jm+l}=a_{k,l}^{(i,j)}
$$
is a bijection between $\M_n(\M_m(R))$ and $\M_{mn}(R)$. Remark that
if $U\in\M_m(R)$ and $V=\{b_{i,j}\}_{0\leq i,j\leq n-1}\in M_n(R)$
then $\{Ub_{i,j}\}_{0\leq i,j\leq n-1}\mapsto U\otimes V$.

Denote $n\times n$ matrices
$$
J_{n}=
\begin{pmatrix}
1      & 0 & \cdots & 0\\
0      & 0 &        & 0\\
\vdots &   & \ddots & \vdots\\
0      & 0 & \cdots &  0
\end{pmatrix}
,\quad
J'_n=\begin{pmatrix}
1      & \cdots & 1\\
\vdots & \ddots & \vdots\\
1      & \cdots & 1
\end{pmatrix}
,\quad
P_{n}=
\begin{pmatrix}
0      & 0 & \cdots & 0 & 1\\
1      & 0 &        & 0 & 0\\
0      & 1 &        & 0 & 0\\
\vdots &   & \ddots &   & \vdots\\
0      & 0 & \cdots & 1 & 0
\end{pmatrix}.
$$
As usual, the $n\times n$ identity matrix is denoted by $I_n$,
and the $m\times n$ matrix
$\begin{pmatrix}
I_n\\
0
\end{pmatrix}$
is denoted by $I_{m,n}$ ($n\leq m$).
Finally, we write $\delta_i^j$ for Kronecker's delta, i.e.
$\delta_i^j=1$, if $i=j$, and $\delta_i^j=0$ otherwise.

\section{Completion of group schemes along the zero section}

Let $A$ be a ring. Let $\cac$ be a pro-nilpotent
augmentated $A$-algebra, i.e. an $A$-algebra with augmentation
map $\epsilon:\cac\to A$, $\epsilon(a)=a$ for any $a\in A$, and
decreasing chain of ideals $\caj_k$ such that $\Kera\epsilon=\caj_1$,
and $\caj_1/\caj_k$ is a nilpotent $A$-algebra for any $k$.
If $\cac$ is complete and Hausdorff in the topology defined
by $\caj_k$, one can define the functor $\Spf \cac$ from the
category of nilpotent $A$-algebras to the category of sets by
$\Spf\cac(N)=\varinjlim\Hom^*_A(\cac/\caj_k,A\oplus N)$, where
$\Hom^*$ is the set of homomorphisms of augmentated algebras.
A functor from the category of nilpotent $A$-algebras
to the category of sets is called a {\it formal scheme} over $A$,
if it is isomorphic to $\Spf\cac$ for some complete Hausdorff
pro-nilpotent augmentated $A$-algebra $\cac$. Evidently, any formal
scheme can be extended to a functor defined on the category of
complete Hausdorff pro-nilpotent augmentated $A$-algebras.
A continuous homomorphism $h:\cac'\to\cac$ of complete Hausdorff
pro-nilpotent augmentated $A$-algebras defines a morphism
$\Spf h:\Spf\cac\to\Spf\cac'$ in the following way: if
$s\in\Spf(\cac)(N)$, then $\Spf h(s)=s\circ h\in\Spf(\cac')(N)$.

\begin{yo} \label{yo}
Let $\cac$, $\cac'$ be complete Hausdorff pro-nilpotent
augmentated $A$-algebras, and $\eta:\Spf\cac\to\Spf\cac'$
be a morphism of functors. Then there is a unique continuous
homomorphism $h:\cac'\to\cac$ such that $\eta=\Spf h$.
\end{yo}

Let $\cah$ be an $A$-algebra and $\caj\subset \cah$ be an
ideal in $\cah$ such that $\cah/\caj\cong A$ and $\cap \caj^k=0$.
Then $\hat\cah_\caj=\varprojlim \cah/\caj^k$ with
the augmentation map $\hat\cah_\caj\to\hat\cah_\caj/\caj_1$
and the chain of ideals
$\caj_k=\Ker(\hat\cah_\caj\to\cah_\caj/\caj^k)$
is a complete Hausdorff pro-nilpotent
augmentated $A$-algebra. In this case, the formal $A$-scheme
$\Spf\hat \cah_\caj$ is called the {\it formal completion of}
$S=\Sp \cah$ {\it along} $\caj$ and is denoted by $\hat S_\caj$.
Remark that if $k$ is such that
$N^k=0$, then
$\Hom^*_A(\cah/\caj^k,A\oplus N)=\Hom^*_A(\cah,A\oplus N)$.
Hence $\hat S_\caj(N)$ can be identified with the subset of
$S(A\oplus N)=\Hom_A(\cah,A\oplus N)$ consisting of
the homomorphisms which map $\caj$ in $N$.

A functor from the category of nilpotent $A$-algebras to
the category of groups is called a {\it formal group scheme}
over $A$ if its composition with the forgetful functor is
a formal scheme over $A$. Due to Yoneda Lemma, giving a
formal group scheme structure to a formal scheme $\Spf \cac$
is equivalent to fixing comultiplication, counit and
coinverse in $\cac$ which satisfy usual group axioms.

Let $G$ be an affine group scheme over $A$ with Hopf algebra
$\cah$, and $\caj$ denote the augmentation ideal in $\cah$.
Then $\cah/\caj\cong A$ and $\cap \caj^k=0$. Thus one can consider
the formal completion $\hat G_\caj$ that we denote just by
$\hat G$. For a nilpotent $A$-algebra $N$, the comultiplication
in $\cah$ induces a group structure on $\hat G(N)$ what converts
$\hat G$ into a formal group scheme. Moreover any morphism
$\eta:G\to G'$ of affine group schemes induces a morphism
$\hat\eta:\hat G\to\hat G'$ of their formal completions.

\section{Logarithms of formal group laws and their types}

Let $A$ be a ring. We denote by $X$ and $Y$ the sets
of variables $x_1,\ldots,x_d$ and  $y_1,\ldots,y_d$,
respectively. A $d$-dimensional {\it formal group law}
over $A$ is a $d$-tuple of formal power series
$F\in A[[X,Y]]^d$ such that

i) $F(X,0)=0$;

ii) $F(X,F(Y,Z))=F(F(X,Y),Z)$;

iii) $F(X,Y)=F(Y,X)$.

\noindent Let $F$ and $F'$ be $d$- and $d'$-dimensional formal
group laws over $A$. A $d'$-tuple of formal power series
$f\in A[[X]]^{d'}$ is called a {\it homomorphism} from $F$ to
$F'$, if $f(0)=0$ and $f(F(X,Y))=F'(f(X),f(Y))$. The matrix
$D\in\M_{d',d}(A)$ such that $f(X)=DX\mod\deg2$ is called
the {\it linear coefficient} of $f$. Formal group laws are
called {\it strongly isomorphic}, if there exists an isomorphism
between them whose linear coefficient is the identity matrix.

If $G$ is a smooth affine group scheme over $A$
with Hopf algebra $\cah$ and augmentation ideal $\caj$,
then any set of elements $x_1,...,x_d\in \caj$ such that
$x_1+\caj^2,...,x_d+\caj^2$ form a free $A$-basis of
$\caj/\caj^2$, gives rise to an isomorphism between
$\hat\cah_\caj$ and $A[[X]]$ which provides a formal group
scheme structure to $\Spf A[[X]]$. The images of
$x_1,...,x_d\in A[[X]]$ with respect to the comultiplication
form a $d$-tuple of elements of $A[[X,Y]]$ which is a
$d$-dimensional formal group law over $A$. Thus, for example,
the element $x-1$ of the augmentation ideal in the Hopf
algebra $\Z[x,x']/(1-xx')$ of the multiplicative group
scheme $\G_m$ over $\Z$, induces the multiplicative
formal group law $\F_m(x,y)=x+y+xy$.

For a morphism $\eta\colon G\to G'$ of smooth affine group
schemes over $A$, denote by $C$ the matrix of the $A$-module
homomorphism from $\caj'/\caj'^2$ to $\caj/\caj^2$ in
the bases $x'_1+\caj'^2,\ldots,x'_{d'}+\caj'^2$ and
$x_1+\caj^2,\ldots,x_d+\caj^2$. Then the linear coefficient
of the formal group law homomorphism corresponding to
$\hat\eta$ in the same bases is $C^T$.

If $\lambda\in A[[X]]^d$ and $\lambda(X)\equiv X\mod\deg 2$,
then there exists a unique
inverse under composition $\lambda^{-1}$. 
In this case,
$
F_\lambda(X,Y)
=\lambda^{-1}(\lambda(X)+\lambda(Y))
$
is a $d$-dimensional formal group law over $A$, and
$\lambda\in\Hom_A(F_\lambda,(\F_a)^d_A)$, where $\F_a(x,y)=x+y$
is the additive formal group law over $\Z$.

Let $\lambda\in A[[X]]^d$, $\lambda(X)\equiv X\mod\deg 2$ and
$\lambda'\in A[[X']]^{d'}$, $\lambda'(X')\equiv X'\mod\deg 2$,
where $X'$ is the set of variables $x'_1,\ldots,x'_{d'}$.
If $D\in\M_{d',d}(A)$, then
$\lambda'^{-1}\circ D\lambda\in A[[X]]^{d'}$
is a homomorphism from $F_{\lambda}$ to $F_{\lambda'}$.

\begin{logo}\cite[Theorem 1]{Ho}\label{logo}
For any $d$-dimensional formal group law $F$ over $\Q$-algebra $\caa$,
there exists a unique
$\lambda\in\Hom_{\caa}(F,(\F_a)^d_{\caa})$
such that $\lambda(X)\equiv X\mod\deg 2$.
\end{logo}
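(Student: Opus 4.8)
The plan is to construct $\lambda$ explicitly by integrating the invariant differentials of $F$, and then prove uniqueness by a formal power series argument. First I would recall that for a $d$-dimensional formal group law $F$ over a $\Q$-algebra $\caa$, one has the $d\times d$ matrix of formal power series
$$
\Psi(X)=\left.\frac{\partial F(X,Y)}{\partial Y}\right|_{Y=0}\in\M_d(\caa[[X]]),
$$
which is invertible over $\caa[[X]]$ since $\Psi(0)=I_d$. The rows of $\Psi(X)^{-1}\,dX$ are the invariant differentials of $F$. Because $\caa$ is a $\Q$-algebra, each entry of the matrix $\Psi(X)^{-1}$ (a power series with coefficients in $\caa$) can be formally integrated termwise with zero constant term; I would let $\lambda(X)$ be the $d$-tuple whose differential $d\lambda$ equals $\Psi(X)^{-1}\,dX$ and whose constant term is zero. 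Since $\Psi(X)^{-1}=I_d\mod\deg 1$, we get $\lambda(X)\equiv X\mod\deg 2$ automatically.

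The main step is to verify $\lambda\in\Hom_\caa(F,(\F_a)^d_\caa)$, i.e. $\lambda(F(X,Y))=\lambda(X)+\lambda(Y)$. Here I would fix $Y$ and differentiate both sides of the candidate identity $g(X):=\lambda(F(X,Y))-\lambda(X)$ with respect to $X$. Using the chain rule together with the cocycle-type identity obtained by differentiating the associativity axiom (ii) $F(F(X,Y),Z)=F(X,F(Y,Z))$ with respect to $Z$ at $Z=0$, one shows that $\frac{\partial g}{\partial X}=0$ as a matrix of power series; combined with $g(0)=\lambda(F(0,Y))-\lambda(0)=\lambda(Y)-0$ evaluated correctly, one concludes $\lambda(F(X,Y))-\lambda(X)=\lambda(Y)$. (More precisely: the invariant differential is characterized by being translation-invariant under the group law, so its integral is additive; this is the classical argument, and the $\Q$-algebra hypothesis is exactly what lets one pass from "the differential of $\lambda\circ F$ in $X$ matches that of $\lambda$" to the integrated statement, since formal antiderivatives are unique up to a constant in characteristic zero.)

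For uniqueness, suppose $\lambda_1,\lambda_2$ are both homomorphisms $F\to(\F_a)^d_\caa$ with $\lambda_i(X)\equiv X\mod\deg 2$. Then $\mu:=\lambda_1-\lambda_2$ satisfies $\mu(F(X,Y))=\mu(X)+\mu(Y)$ and $\mu(X)\equiv 0\mod\deg 2$. Writing $\mu=\sum_{n\ge 2}\mu_n$ with $\mu_n$ the degree-$n$ homogeneous part, I would argue by induction on $n$: the additivity relation forces $\mu_n(X+Y)=\mu_n(X)+\mu_n(Y)$ modulo contributions from lower-degree parts (which vanish by induction) plus the cross terms coming from the nonlinear part of $F$ (which also vanish by induction); hence $\mu_n$ is an additive homogeneous polynomial of degree $n\ge 2$ over a $\Q$-algebra, which is identically zero. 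So $\mu=0$.

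The hard part is really the bookkeeping in the existence step: making the chain-rule manipulation with the $d\times d$ Jacobian matrices rigorous at the level of formal power series, and correctly invoking the associativity identity in the right form so that the $X$-derivative of $\lambda\circ F$ collapses. Once that matrix identity is in place, both the additivity of $\lambda$ and the characteristic-zero vanishing arguments for uniqueness are routine. I would also remark that this $\lambda$ is precisely the \emph{logarithm} of $F$, consistent with the terminology used in the rest of the paper, and that over a general (non-$\Q$) ring the construction fails because the termwise integration introduces denominators.
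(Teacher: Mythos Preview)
The paper does not supply its own proof of this statement; it simply cites \cite[Theorem~1]{Ho}. Your outline is the classical argument and is essentially Honda's: build $\lambda$ by integrating the invariant differential $\Psi(X)^{-1}\,dX$, then verify additivity from translation-invariance (which in turn comes from the associativity axiom), and deduce uniqueness from the fact that an additive homogeneous polynomial of degree $\ge 2$ over a $\Q$-algebra vanishes.

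One point you pass over deserves a flag in the $d>1$ case. To ``integrate $\Psi(X)^{-1}\,dX$'' you need the $1$-form $\omega_i=\sum_j(\Psi^{-1})_{ij}\,dx_j$ to be closed, i.e.\ $\partial_{x_k}(\Psi^{-1})_{ij}=\partial_{x_j}(\Psi^{-1})_{ik}$. That is not just ``bookkeeping with Jacobians''; it is a genuine input, and it is exactly here that the commutativity axiom (iii) is used. (Honda's own write-up actually avoids the differential-forms language and instead solves the functional equation $\lambda(F(X,Y))=\lambda(X)+\lambda(Y)$ for $\lambda$ degree by degree, which makes the role of the $\Q$-algebra hypothesis and of commutativity equally transparent.) With closedness established, the rest of your existence argument and your uniqueness argument are correct as stated.
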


Let $A$ be a ring of characteristic 0. Then $\caa=A\otimes_{\Z}\Q$
is a $\Q$-algebra. If $F$ is a formal group law over $A$, then
applying Proposition~\ref{logo} to $F_\caa$ we obtain
$\lambda\in\caa[[X]]^d$ which is called the {\it logarithm}
of $F$. The logarithm of $\F_m$ is
$\LL (x)=\sum_{i=1}^\infty(-1)^{i+1}x^i/i$.

\begin{homo}\cite[Proposition 1.6]{Ho}\label{homo}
Let $F$ and $F'$ be $d$- and $d'$-dimensional formal group laws
over $A$ with logarithms $\lambda$ and $\lambda'$, respectively,
and $f\in\Hom_A(F,F')$, then $f=\lambda'^{-1}\circ D\lambda$,
where $D\in\M_{d',d}(A)$ is the linear coefficient of $f$.
\end{homo}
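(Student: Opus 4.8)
The plan is to conjugate $f$ by the two logarithms so as to turn it into a homomorphism between additive formal group laws over the $\Q$-algebra $\caa=A\otimes_\Z\Q$, and then to exploit the fact that any homomorphism between additive formal group laws over a ring of characteristic $0$ is linear, its linear coefficient being the entire homomorphism.

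First I would pass to $\caa$, where both $\lambda$ and $\lambda'$ are defined. Since $\lambda(X)\equiv X\mod\deg 2$, it has a compositional inverse $\lambda^{-1}\in\caa[[X]]^d$ with $\lambda^{-1}(X)\equiv X\mod\deg 2$, and likewise for $\lambda'$. Base-changing $f$ to $\caa$ and composing the homomorphisms $\lambda^{-1}\colon(\F_a)^d_\caa\to F_\caa$, $f\colon F_\caa\to F'_\caa$ and $\lambda'\colon F'_\caa\to(\F_a)^{d'}_\caa$, one obtains $g:=\lambda'\circ f\circ\lambda^{-1}\in\Hom_\caa\bigl((\F_a)^d_\caa,(\F_a)^{d'}_\caa\bigr)$, so that $g(0)=0$ and $g(X+Y)=g(X)+g(Y)$.

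Next I would check that such a $g$ is linear. Writing $g=\sum_{n\geq 1}g_n$ with $g_n$ the degree-$n$ homogeneous component, additivity forces $g_n(X+Y)=g_n(X)+g_n(Y)$ for every $n$; substituting $Y=X$ gives $(2^n-2)g_n=0$, and since $2^n-2$ is invertible in $\caa$ for $n\geq 2$ we conclude $g_n=0$ for $n\geq 2$. Hence $g(X)=EX$ for some $E\in\M_{d',d}(\caa)$, and comparing degree-$1$ terms in $g=\lambda'\circ f\circ\lambda^{-1}$---using $\lambda(X)\equiv X$, $\lambda'(X')\equiv X'$ and $f(X)\equiv DX\mod\deg 2$---gives $E=D$. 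Therefore $\lambda'\circ f\circ\lambda^{-1}=D$, that is, $f=\lambda'^{-1}\circ D\lambda$ as an identity in $\caa[[X]]^{d'}$, which is the assertion; the remark preceding the proposition already shows that the right-hand side is a homomorphism from $F$ to $F'$ with linear coefficient $D$, so nothing more is needed.

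There is no genuine obstacle here. The one step requiring care---and the only place where the hypothesis that $A$ has characteristic $0$ enters---is the passage from additivity of $g$ to linearity of $g$; over rings in which some $2^n-2$ with $n\geq 2$ fails to be invertible this step breaks down, since homogeneous polynomials of higher degree may then be additive.
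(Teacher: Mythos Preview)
Your proof is correct and follows the standard argument: conjugate by the logarithms to reduce to a homomorphism of additive formal group laws over the $\Q$-algebra $\caa$, then use that any additive power series over a $\Q$-algebra is linear. The paper does not supply its own proof of this proposition---it simply cites Honda \cite[Proposition~1.6]{Ho}---so there is no in-paper argument to compare against; your approach is precisely the natural one and matches Honda's original.
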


Let $L$ be a finite unramified extension of $\Q_p$ with integer ring
$\cao_L$ and Frobenius automorphism $\sigma$.
Denote $\call_d=L[[X]]$
and let $\bt:\call_d\to\call_d$ be a $\Q_p$-algebra map
defined by  $\bt(x_i)=x_i^p$ and $\bt(a)=\sigma(a)$,
where $a\in L$.
Let $\cae=\cao_L[[\bt]]$ be a noncommutative $\Q_p$-algebra with
multiplication rule $\bt a=\sigma(a)\bt$, $a\in \cao_L$.
Then $\call_d$ has a left $\cae$-module structure which
induces a left $\M_d(\cae)$-module structure on $\call_d^d$.

If $u\in\M_d(\cae)$, $u\equiv pI_d\mod\bt$ and $\lambda\in\call_d^d$
are such that $u\lambda\equiv 0\mod p$, we say that $\lambda$
is {\it of type} $u$. Clearly, if $u\in\M_d(\cae)$, $u\equiv pI_d\mod\bt$,
then $(u^{-1}p)(\id)\in\call_d^d$ is of type $u$, and
$((u^{-1}p)(\id))\equiv X\mod\deg2$. Remark also that
$\LL $ is of type $p-\bt$.

\begin{hon}\cite[Theorem 2, Proposition 3.3, Theorem 3]{Ho}\label{hon}
\begin{description}
\item[(i)] If $\lambda\in\call_d^d$ is of type $u$, then
$\lambda$ is the logarithm of a formal group law over $\cao_L$.
\item[(ii)] For any formal group law $F$ over $\cao_L$ with logarithm
$\lambda\in\call_d^d$ there exists $u\in\M_d(\cae)$ such that $\lambda$
is of type $u$.
\item[(iii)] Let $F, F'$ be $d$- and $d'$-dimensional formal
group laws over $\cao_L$ with logarithms $\lambda, \lambda'$
of type $u, u'$, respectively, and $D\in\M_{d',d}(\cao_L)$. Then
$\lambda'^{-1}\circ D\lambda\in\Hom_{\cao_L}(F,F')$ if and only if
there exists $w\in\M_{d',d}(\cae)$ such that $u'D=wu$.
\end{description}
\end{hon}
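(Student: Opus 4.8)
Since this proposition is quoted from \cite{Ho} (Theorem 2, Proposition 3.3, Theorem 3), the plan is to reconstruct the arguments rather than invent them, relying on the formalism of $p$-typical-style congruences that the setup has already introduced: the twisted power series ring $\cae=\cao_L[[\bt]]$ acting on $\call_d^d$, and the notion of a tuple $\lambda$ being of type $u$ when $u\equiv pI_d\bmod\bt$ and $u\lambda\equiv 0\bmod p$.

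For part (i), the strategy is to show that $F_\lambda(X,Y)=\lambda^{-1}(\lambda(X)+\lambda(Y))$ has coefficients in $\cao_L$, not merely in $L$. The key is a functional-equation integrality lemma: if $\lambda\equiv X\bmod\deg2$ satisfies $u\lambda\equiv 0\bmod p$ for some $u\equiv pI_d\bmod\bt$, then for any $g\in\cao_L[[X]]^d$ with $g\equiv 0\bmod\deg2$ (or more generally any $g$ satisfying a compatible congruence) the series $\lambda^{-1}(\lambda(X)+g(X))$ has integral coefficients. One proves this by induction on total degree: writing $u=pI_d-\sum_{k\geq1}c_k\bt^k$ with $c_k\in\M_d(\cao_L)$, the relation $u\lambda\equiv 0\bmod p$ lets one express $p\lambda$ in degree $n$ in terms of the values of $\lambda$ in degrees $\leq n/p$ composed with $\bt$, and this bootstraps to show that the denominators introduced by $\lambda^{-1}$ are cancelled. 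Applying the lemma with the appropriate $g$ gives integrality of $F_\lambda$, and axioms (i)--(iii) of a formal group law hold automatically because $\lambda$ is an isomorphism onto the additive group over $L$.

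For part (ii), given $F$ over $\cao_L$ with logarithm $\lambda\in\call_d^d$, one constructs $u$ degree by degree. Set $u=pI_d-\sum_{k\geq1}c_k\bt^k$ and determine the $c_k\in\M_d(\cao_L)$ recursively so that $u\lambda\equiv 0\bmod p$ term by term; at each stage the coefficient one must hit lies in $\cao_L$ precisely because $F$ (hence the relevant combination of coefficients of $\lambda$) is integral — this is the Honda "the logarithm of an integral formal group law admits a type" argument, and the inductive solvability is where one must check that the matrix being inverted at each step is $pI_d$ modulo higher terms, so no new denominators arise. For part (iii), the equivalence is proved in both directions using Proposition~\ref{homo}: the homomorphism corresponding to $D$ is $f=\lambda'^{-1}\circ D\lambda$, and $f$ has coefficients in $\cao_L$ iff $D\lambda$ is "of type $u'$" in the appropriate sense, i.e. iff $u'D\lambda\equiv 0\bmod p$; combining this with $u\lambda\equiv 0\bmod p$ and the integrality lemma shows the condition is equivalent to $u'D=wu$ for some $w\in\M_{d',d}(\cae)$. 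One direction ($u'D=wu\Rightarrow$ integral) is immediate since then $u'D\lambda=wu\lambda\equiv 0\bmod p$; the converse requires the division argument in $\M_{d',d}(\cae)$, peeling off powers of $\bt$ and using that $u$ is a unit times $pI_d$ modulo $\bt$ to solve $u'D-wu\equiv 0$ successively.

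The main obstacle throughout is the functional-equation integrality lemma underlying all three parts: the careful induction showing that the twisted congruence $u\lambda\equiv 0\bmod p$ forces $p$-integrality of $\lambda^{-1}$ composed with integral series. Everything else — constructing $u$, manipulating the matrix equation $u'D=wu$ — is bookkeeping organized around that lemma. Since the statement is cited verbatim from Honda's paper, the honest approach in the write-up is to refer to \cite{Ho} for the proof and merely indicate this structure.
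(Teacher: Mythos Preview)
The paper gives no proof of this proposition: it is stated with a citation to \cite{Ho} (Theorem~2, Proposition~3.3, Theorem~3) and used as a black box throughout. You correctly identify this and your final sentence --- that the honest approach is simply to refer to \cite{Ho} --- is exactly what the paper does; your sketch of Honda's functional-equation lemma and the inductive construction of types is a faithful outline of the original argument, but none of it appears in the paper itself.
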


We will also use the following technical result.

\begin{ass}\cite[Lemma 2.3]{Ho}\label{ass}
If $v\in\cae$, $\lambda\in\call_1$ is of type $u\in\cae$ and
$\phi\in O_L[[X]]$, then
$v(\lambda\circ\phi)\equiv(v\lambda)\circ\phi\mod p$.
\end{ass}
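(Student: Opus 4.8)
The plan is to reduce the statement to a linear identity over $\call_1$ by unwinding the definition of "type" and the action of $\cae$ on power series. First I would write $v=\sum_{k\geq 0} a_k\bt^k$ with $a_k\in\cao_L$, so that for any $\mu\in\call_1$ one has $v\mu=\sum_k a_k\sigma^k(\mu)(x^{p^k})$, where $\sigma^k$ acts on the coefficients; the only subtlety is convergence, but since we work modulo $p$ and $\lambda$ is of type $u$, the hypothesis $u\lambda\equiv 0\bmod p$ with $u\equiv pI_1\bmod\bt$ forces the high-order $\bt$-terms of $v$ applied to $\lambda$ to be controlled — more precisely, writing $u=p-\bt c$ for a suitable series $c\in\cae$, the relation $u\lambda\equiv0\bmod p$ gives $\lambda\equiv p^{-1}\bt c\lambda$, which lets one replace $\lambda$ by something that is $p$-adically small after enough applications of $\bt$, so that $v\lambda\bmod p$ is in fact a finite sum. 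This is the standard Honda bookkeeping and I would only sketch it.

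Next, the heart of the matter: I need $v(\lambda\circ\phi)\equiv(v\lambda)\circ\phi\bmod p$. The operator $v$ is built from $\cao_L$-scalars (on which $\circ\phi$ acts trivially since $\phi\in\cao_L[[X]]$ has coefficients fixed appropriately — here we only substitute into the variable, not the coefficients) and from $\bt$. So the whole claim comes down to the single commutation fact
\[
\bt(\mu\circ\phi)\equiv(\bt\mu)\circ\phi\pmod p\qquad\text{for }\mu\in\call_1,\ \phi\in\cao_L[[x]].
\]
Now $\bt\mu = \sigma(\mu)(x^p)$ where $\sigma(\mu)$ means apply $\sigma$ to the coefficients of $\mu$; hence $(\bt\mu)\circ\phi = \sigma(\mu)(\phi(x)^p)$, while $\bt(\mu\circ\phi) = \sigma(\mu\circ\phi)(x^p) = \sigma(\mu)\big(\sigma(\phi)(x^p)\big)$, using that $\bt$ is a $\Q_p$-algebra map and that $\sigma$ commutes with substitution. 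So I must compare $\sigma(\phi)(x^p)$ with $\phi(x)^p$ inside the argument. But for $\phi\in\cao_L[[x]]$ the Frobenius congruence gives $\phi(x)^p\equiv\phi^{(p)}(x^p) = \sigma(\phi)(x^p)\bmod p$ (the coefficient-wise $p$-th power equals $\sigma$ on $\cao_L$ since $L/\Q_p$ is unramified), and substituting congruent-mod-$p$ power series into a fixed power series with integral coefficients preserves the congruence mod $p$. That yields $\bt(\mu\circ\phi)\equiv(\bt\mu)\circ\phi\bmod p$, and iterating gives $\bt^k(\mu\circ\phi)\equiv(\bt^k\mu)\circ\phi\bmod p$ for all $k$.

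Finally I would assemble the pieces: expand $v=\sum_k a_k\bt^k$, use $\cao_L$-linearity of both sides in the $a_k$, apply the commutation identity for each $\bt^k$, and sum — the finiteness established in the first step ensures all manipulations mod $p$ are legitimate and the sum on each side is the same finite sum mod $p$. The main obstacle is the first step: making precise why $v\lambda$ and $v(\lambda\circ\phi)$ reduce to finite sums mod $p$, i.e. that the apparently infinite $\bt$-expansion of $v$ does no harm. This is exactly where the hypothesis that $\lambda$ is \emph{of type} $u$ (rather than an arbitrary series) is used; the Frobenius congruence step, by contrast, is elementary once stated correctly.
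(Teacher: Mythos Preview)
The paper does not prove this lemma; it simply cites \cite[Lemma~2.3]{Ho}. So there is no in-paper proof to compare against, and I assess your argument on its own terms.

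Your Frobenius-congruence idea is the right one, but there is a real gap in the reduction step. You assert the single commutation $\bt(\mu\circ\phi)\equiv(\bt\mu)\circ\phi\pmod p$ for arbitrary $\mu\in\call_1$, justified by ``substituting congruent-mod-$p$ power series into a fixed power series with integral coefficients preserves the congruence mod $p$.'' But $\sigma(\mu)$ does \emph{not} have integral coefficients: $\lambda$ lives in $\call_1=L[[x]]$ and typically has $p$-power denominators. Concretely, over $L=\Q_p$ with $\mu=x/p$ and $\phi=x+x^2$, one gets $(\bt\mu)\circ\phi-\bt(\mu\circ\phi)=(\phi^p-\phi(x^p))/p$, whose $x^{p+1}$-coefficient is $\binom{p}{1}/p=1\not\equiv 0\pmod p$. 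So the displayed commutation is false for general $\mu$, and the iteration as written does not go through.

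The fix is to use the type hypothesis where it actually matters --- not for ``finiteness'' (each $x^N$-coefficient involves only finitely many $\bt^k$-terms, so there is no convergence issue) but for denominator control. From $u\lambda\equiv 0\pmod p$ with $u\equiv p\pmod\bt$, an easy induction on $v_p(n)$ shows that the coefficient $c_n$ of $x^n$ in $\lambda$ satisfies $p^{v_p(n)}c_n\in\cao_L$. Combine this with the enhanced congruence: if $A\equiv B\pmod p$ in $\cao_L[[x]]$ then $A^{p^j}\equiv B^{p^j}\pmod{p^{j+1}}$. Applying this with $A=\sigma(\phi)(x^p)$, $B=\phi(x)^p$ and $n=p^jm$ with $(m,p)=1$ gives $\sigma(c_n)(A^n-B^n)\in p\cao_L[[x]]$, hence the single-$\bt$ commutation holds for $\mu=\lambda$. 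Since each $\bt^k\lambda$ inherits the same coefficient bound, your iteration then becomes legitimate and the rest of the argument is fine.
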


If $\lambda\in\Q[[X]]^d$ and  $p$ is a prime,
we call a type of $\lambda$ considered as an element of
$\Q_p[[X]]^d$ its {\it $p$-type}. To avoid confusion
in this case, instead of $\bt$ and $\cae$, we write
$\bt_p$ and $\cae_p$.

Let $\Xi$ be a map from the set of prime numbers to $\M_d(\Z)$
such that any two matrices from its image commute.
If $p_1,\ldots,p_k$ are distinct primes and $m=\prod_{i=1}^kp_i^{t_i}$,
put $A_m=\prod_{i=1}^k \Xi(p_i)^{t_i}$ and
$\lambda_\Xi=\sum_{m=1}^\infty A_mX^m/m\in\Q[[X]]$. Finally,
define $F_\Xi(X,Y)=\lambda_\Xi^{-1}(\lambda_\Xi(X)+\lambda_\Xi(Y))$.

\begin{glob}\cite[Theorem 8]{Ho}\label{glob}
$\lambda_\Xi$ is of $p$-type $pI_d-\Xi(p)\bt_p\in\M_d(\cae_p)$,
and $F_\Xi$ is a formal group law over $\Z$.
\end{glob}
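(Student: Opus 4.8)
The plan is to verify the two assertions of Proposition~\ref{glob} separately, reducing both to the scalar ($d=1$) statements in Honda's theory that are already available as Proposition~\ref{logo}--\ref{hon} and Lemma~\ref{ass}. The key observation is that everything in sight is built coordinatewise out of the classical logarithm $\LL(x)=\sum_{i\ge1}(-1)^{i+1}x^i/i$, which is of type $p-\bt$ for every $p$, and that the hypothesis ``any two matrices in the image of $\Xi$ commute'' is exactly what is needed to make the matrix bookkeeping behave like the scalar case.

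First I would fix a prime $p$ and write $m=p^t m'$ with $p\nmid m'$, so that $A_m=\Xi(p)^t A_{m'}$ and $A_{m'}$ is a product of powers of the matrices $\Xi(\ell)$ for $\ell\ne p$; in particular every $A_{m'}$ commutes with $\Xi(p)$. Decomposing the sum defining $\lambda_\Xi$ according to the exact power of $p$ dividing the index, one gets
$$
\lambda_\Xi(X)=\sum_{p\nmid m'} A_{m'}\sum_{t\ge0}\Xi(p)^t\frac{X^{p^t m'}}{p^t m'}.
$$
I want to show $(pI_d-\Xi(p)\bt_p)\lambda_\Xi\equiv0\bmod p$. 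Apply the operator $pI_d-\Xi(p)\bt_p$ term by term. Here $\bt_p$ raises $x\mapsto x^p$ (and acts as Frobenius on coefficients, but the coefficients lie in $\Z$, so that action is trivial). The point is that $\bt_p\big(X^{p^t m'}/(p^t m')\big)=X^{p^{t+1}m'}/(p^t m')$, so the operator telescopes across consecutive values of $t$ exactly as it does for $\LL$: one checks that for the inner series $\mu_{m'}(x):=\sum_{t\ge0}\Xi(p)^t x^{p^t m'}/(p^t m')$ one has $(pI_d-\Xi(p)\bt_p)\mu_{m'}\equiv (pI_d-\Xi(p)\bt_p)\big(\Xi(p)^{0}\LL-\text{correction}\big)$; more cleanly, since $\Xi(p)$ commutes with $A_{m'}$ and with itself, $\mu_{m'}(x)$ is, up to the commuting scalar substitution $x\mapsto x^{m'}$ and the commuting matrix factor, essentially $\LL$ composed with a $p$-th-power substitution, and Lemma~\ref{ass} together with $\LL$ being of type $p-\bt$ gives the congruence mod $p$. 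Summing over $m'$ (a $p$-adically convergent sum of things each $\equiv0\bmod p$) yields $(pI_d-\Xi(p)\bt_p)\lambda_\Xi\equiv0\bmod p$. Since also $pI_d-\Xi(p)\bt_p\equiv pI_d\bmod\bt_p$ and $\lambda_\Xi(X)\equiv X\bmod\deg2$, this says precisely that $\lambda_\Xi$ is of $p$-type $pI_d-\Xi(p)\bt_p$.

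Given that, the second assertion is almost immediate: by Proposition~\ref{hon}(i), being of a type over $\cao_{\Q_p}=\Z_p$ forces $F_\Xi=\lambda_\Xi^{-1}(\lambda_\Xi(X)+\lambda_\Xi(Y))$ to have coefficients in $\Z_p$, for every prime $p$. A priori $\lambda_\Xi$ only has rational coefficients, but a power series with $\Q$-coefficients all of whose coefficients lie in $\Z_p$ for every $p$ has $\Z$-coefficients; hence $F_\Xi\in\Z[[X,Y]]^d$. That it satisfies the formal group law axioms (i)--(iii) is the general fact recorded before Proposition~\ref{logo}: for any $\lambda$ with $\lambda\equiv X\bmod\deg2$, the power series $\lambda^{-1}(\lambda(X)+\lambda(Y))$ is automatically a formal group law. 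So $F_\Xi$ is a formal group law over $\Z$.

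The main obstacle I anticipate is purely organizational rather than conceptual: making the telescoping-mod-$p$ argument for the matrix series genuinely clean, i.e.\ getting the indexing of the double sum and the interaction of $\bt_p$ with the $A_{m'}$ right, and in particular justifying termwise application of $pI_d-\Xi(p)\bt_p$ and the $p$-adic convergence of the resulting sum. The commutativity hypothesis on the image of $\Xi$ is used exactly here, to ensure $\Xi(p)^t$ pulls through $A_{m'}$ and that Honda's scalar Lemma~\ref{ass} can be applied coordinate-by-coordinate after diagonalizing the (commuting) action of $\Xi(p)$, or alternatively to reduce directly to the known $p$-type $p-\bt$ of $\LL$ via the substitution $x\mapsto x^{m'}$. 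Once the congruence $(pI_d-\Xi(p)\bt_p)\lambda_\Xi\equiv0\bmod p$ is in hand, nothing else requires real work.
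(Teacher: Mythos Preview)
Your approach is essentially the paper's: decompose $\lambda_\Xi$ according to the $p$-part of the index, observe the telescoping under $pI_d-\Xi(p)\bt_p$, and then conclude $F_\Xi\in\Z[[X,Y]]^d$ via Proposition~\ref{hon}(i) and $\bigcap_p\Z_p=\Z$. The detour through Lemma~\ref{ass}, diagonalization, and $\LL$ is unnecessary and slightly off (the inner series $\mu_{m'}$ is not a composition involving $\LL$); the paper just computes directly that $(pI_d-\Xi(p)\bt_p)\lambda_\Xi=\sum_{(m,p)=1}\tfrac{pA_m}{m}X^m$, an \emph{exact} identity whose right-hand side is manifestly $\equiv0\bmod p$ since each $m$ is a $p$-adic unit.
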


\begin{proof}
For any prime $p$, we have
$$
(pI_d-\Xi(p)\bt_p)\lambda_\Xi=(pI_d-A_p\bt_p)\left(\sum_{(m,p)=1}
\sum_{i=0}^\infty\frac{A_mA_p^i}{mp^i}X^{mp^i}\right)
$$
$$
=\sum_{(m,p)=1}\frac{pA_m}{m}X^m\equiv 0\mod p.
$$
Thus $\lambda_\Xi$ is of the required $p$-type.
By Proposition~\ref{hon} (i), $F_\Xi$ is a formal group law over $\Z_p$.
Since it is true for any prime $p$, it is defined over $\Z$.
\end{proof}

\section{ Universal fixed pairs for group actions on formal group laws}

The action of a group $\cag$ on a formal group law $\Phi$ over
a ring $A$ is given by a homomorphism $\cag\to\Aut_A(\Phi)$.
We denote the image of $\sigma\in\cag$ under this homomorphism
also by $\sigma$. It will never lead to confusion.

Let $F$ be a formal group law over $A$ and
$f\in\Hom_A(F,\Phi)$. A pair $(F,f)$ is called {\it fixed} for
$(\Phi,\cag)$ if $\sigma\circ f=f$ for any $\sigma\in\cag$. Yoneda Lemma
implies that a pair $(F,f)$ is fixed if and only if for any
nilpotent $A$-algebra $N$, we have $\Ima f(N)\subset \Phi(N)^{\cag}$.

A fixed pair $(F,f)$ is called {\it universal} if for any fixed
pair $(F',f')$, there exists a unique
$g\in\Hom_A(F',F)$ such that $f'=f\circ g$.
Clearly, if $(F,f)$ and $(\tilde F,\tilde f)$ are
universal fixed pairs, then $g\in\Hom_A(\tilde F,F)$
satisfying $\tilde f=f\circ g$ is an isomorphism.

\begin{ufix_suf} \label{ufix_suf}
If $(F,f)$ is a fixed pair such that for any
nilpotent $A$-algebra $N$ the map $f(N): F(N)\to \Phi(N)^{\cag}$ is
bijective, then it is universal.
\end{ufix_suf}

\begin{proof}
We notice that for any nilpotent $A$-algebra $N$ and
any fixed pair $(F',f')$, there exists a unique homomorphism
$ g(N):F'(N)\to F(N)$ such that $f(N)\circ g(N)=f'(N)$,
and that $ g(N)$ is functorial in $N$. Then the statement follows
from Yoneda Lemma.
\end{proof}



\begin{mor_ufix} \label{mor_ufix}
Let $\cag$ act on formal group laws $\Phi$ and $\Phi'$,
and $\phi\in\Hom_A(\Phi,\Phi')$
commute with the actions of $\cag$. If $(F,f)$ and
$(F',f')$ are universal fixed pairs for $(\Phi,\cag)$
and $(\Phi',\cag)$, respectively, then there exists
a unique $\tilde\phi\in\Hom_A(F,F')$ such that
$\phi\circ f=f'\circ\tilde\phi$.
\end{mor_ufix}

\begin{proof}
For any $\sigma\in\cag$, we have
$\sigma\circ\phi\circ f=\phi\circ\sigma\circ f=\phi\circ f$.
Hence, $(F,\phi\circ f)$ is a fixed pair for $(\Phi',\cag)$.
Therefore, there exists a unique $\tilde\phi\in\Hom_A(F,F')$
such that $\phi\circ f=f'\circ\tilde\phi$.
\end{proof}

Let $\cag_1,\cag_2$ act on $\Phi$ and $\sigma_1\sigma_2=\sigma_2\sigma_1$
for any $\sigma_1\in\cag_1,\sigma_2\in\cag_2$.
If $(F_1,f_1)$ is a universal fixed pair for $(\Phi,\cag_1)$
then for any $\sigma_2\in\cag_2$ we have
$\sigma_1\circ(\sigma_2\circ f_1)=\sigma_2\circ(\sigma_1\circ
f_1)=\sigma_2\circ f_1$ for any $\sigma_1\in\cag_1$.
Thus there exists a unique $\sigma'_2\in\Aut_A(F_1)$ such that
$f_1\circ\sigma'_2=\sigma_2\circ f_1$. It induces an action of $\cag_2$
on $F_1$.

\begin{ufix_prod} \label{ufix_prod}
Let $\cag=\cag_1\times\cag_2$ act on a formal group law $\Phi$.
If $(F_1,f_1)$ is a universal fixed pair for $(\Phi,\cag_1)$ and
$(F_2,f_2)$ is a universal fixed pair for $(F_1,\cag_2)$ with respect to
the induced action of $\cag_2$ on $F_1$,
then $(F_2,f_1\circ f_2)$ is a universal fixed pair for $(\Phi,\cag)$.
\end{ufix_prod}

\begin{proof}
Let $(F',f')$ be a fixed pair for $(\Phi,\cag)$, i.e.,
$\sigma\circ f'=f'$ for any $\sigma\in\cag$.
It implies that $(F',f')$ is a fixed pair for $(\Phi,\cag_1)$
thus giving $g_1\in\Hom_A(F',F_1)$ such that $f'=f_1\circ g_1$.
Then for any $\sigma_2\in\cag_2$, we have
$f_1\circ(\sigma'_2\circ g_1)=\sigma_2\circ f_1\circ g_1=
\sigma_2\circ f'=f'=f_1\circ g_1$ which implies
$\sigma'_2\circ g_1=g_1$ by universality of $(F_1,f_1)$.
It gives $g_2\in\Hom_A(F',F_2)$ such that $g_1=f_2\circ g_2$.
Thus $(f_1\circ f_2)\circ g_2=f_1\circ g_1=f'$.

The uniqueness of $g_2$ can be easily checked in a similar way.
\end{proof}

Let $L$ be a finite unramified extension of $\Q_p$ with integer ring
$\cao_L$ and residue field $k$.

\begin{ker_zero} \label{ker_zero}
Let $\cag'$ be a set of matrices over $\cao_L$ with
the same number of columns $n$ such that
$\cap_{D'\in\cag'}\Ker\left(D'\otimes k\right)=\{0\}$.
Then there exist $D'_i\in\cag'$, $D'_i\in\M_{n_i,n}(\cao_L)$
and $C_i\in\M_{n,n_i}(\cao_L)$ for $1\le i\le m$ such that
$\sum_{i=1}^m C_iD'_i\in\GL_n(\cao_L)$.
\end{ker_zero}

\begin{proof}
It is enough to prove that if $\overline\cag$ is a set of
matrices over $k$ with the same number of columns $n$ such that
$\cap_{\overline D\in\overline\cag}\Kera \overline D=\{0\}$,
there exist $\overline D_i\in\overline\cag$,
$\overline D_i\in\M_{n_i,n}(k), 1\le i\le m$, and
$\overline C_i\in\M_{n,n_i}(k), 1\le i\le m$, such that
$\sum_{i=1}^m \overline C_i\overline D_i\in\GL_n(k)$.
Consider a finite set of matrices
$\overline D_1,\ldots,\overline D_m\in\overline\cag$
whose kernels have zero intersection, and construct
a matrix $D^*$ with the rows of these matrices. Since
$\cap_{i=1}^m\Kera \overline D_i=\{0\}$, the rank of $D^*$
is equal to $n$. Then there is a matrix $C^*$
with $n$ rows such that $C^*D^*=I_n$.
The required matrices $\overline C_1,\ldots,\overline C_m$
are formed by the corresponding columns of $C^*$.
\end{proof}

\begin{ufix_ex} \label{ufix_ex}
{\rm I.} Let $\cag^*\subset\M_r(\cao_L)$. The following
conditions are equivalent\\
{\rm (i)} $\rk_{\cao_L}\cap_{D\in\cag^*}\Kera D=
\dim_k\cap_{D\in\cag^*}\Ker\left(D\otimes k\right)$;\\
{\rm (ii)} If $x\in\cao_L^r$ is such that
for any $D\in\cag^*$ there is $y_D\in\cao_L^r$
satisfying $D x=py_D$,
then there exists $x'\in\cao_L^r$ such that
$D x'=y_D$ for any $D \in\cag^*$;\\
{\rm (iii)} There exist $0\le e\le r$ and $Q\in\GL_r(\cao_L)$
such that for any $D \in\cag^*$
$$
Q^{-1}D Q=
\begin{pmatrix}
0 & \hat D \\
0 & \tilde D
\end{pmatrix}, \qquad
\hat D \in\M_{e,r-e}(\cao_L),\,
\tilde  D \in\M_{r-e}(\cao_L),
$$
and there are $D_i\in\cag^*$, $\hat C_i\in\M_{r-e,e}(\cao_L)$
and $\tilde C_i\in\M_{r-e}(\cao_L)$, $1\leq i\leq m$, such that
$\sum_{i=1}^m\hat C_i\hat D_i+\tilde C_i\tilde D_i\in\GL_{r-e}(\cao_L)$.\\

{\rm II.} Let $\Phi $ be an $r$-dimensional formal group law over
$\cao_L$ with logarithm $\Lambda$. Let a group $\cag$ act on $\Phi$ and
$\cag^*=\{D\in\M_r(\cao_L):\Lambda^{-1}\circ (D+I_r)\Lambda\in\cag \}$.
If $\cag^*$ satisfies one of the above equivalent conditions
then $(\Phi ,\cag)$ has a universal fixed pair.
Moreover, if $\Lambda$ is of type $v$, then a universal fixed
pair $(F,f)$ can be taken in such a way that the linear coefficient
of $f$ is $QI_{r,e}$, and the logarithm of $F$ is of type $u$,
where $u$ is the upper-left $e\times e$-submatrix of $\tilde v=Q^{-1}vQ$.\\

\end{ufix_ex}

\begin{proof}
I. Remark that $\cap_{D\in\cag^*}\Kera D$
is a $p$-divisible $\cao_L$-module. Since the reduction of
$\cap_{D\in\cag^*}\Kera D$ is a subspace of
$\cap_{D\in\cag^*}\Ker\left(D\otimes k\right)$, condition (i)
is equivalent to the fact that they coincide. Obviously,
the latter is equivalent to condition (ii).

Now suppose that for any $D \in\cag^*$
$$
\begin{pmatrix}
0 & \hat D \\
0 & \tilde D
\end{pmatrix}
\begin{pmatrix}
\hat x\\
\tilde x
\end{pmatrix}=
\begin{pmatrix}
p\hat y_D\\
p\tilde y_D
\end{pmatrix}, \qquad
\hat x,\hat y_D\in\cao_L^e,\,
\tilde x,\tilde y_D\in\cao_L^{r-e}.
$$
Then $\hat D \tilde x=p\hat y_D$,
$\tilde D \tilde x=p\tilde y_D$ and for
$\tilde C_i, \hat C_i$ provided by condition (iii) we have
$(\sum_{i=1}^m\hat C_i\hat D_i+\tilde C_i\tilde D_i)\tilde x=
p\sum_{i=1}^m\hat C_i\hat y_{D_i}+\tilde C_i\tilde y_{D_i}$
which implies that $\tilde x=p\tilde x'$ for some
$\tilde x'\in\cao_L^{r-e}$. It gives for any $D \in\cag^*$
$$
\begin{pmatrix}
0 & \hat D \\
0 & \tilde D
\end{pmatrix}
\begin{pmatrix}
0\\
\tilde x'
\end{pmatrix}=
\begin{pmatrix}
\hat y_D\\
\tilde y_D
\end{pmatrix}
$$
as required.

For the reverse implication, take
$e=\rk_{\cao_L}\cap_{D\in\cag^*}\Kera D$ and let
$Q$ be the transition matrix from a basis of $\cao_L^r$
with first $e$ vectors from  $\cap_{D\in\cag^*}\Kera D$
to the standard basis.
Then for any $D \in\cag^*$, we have
$$
Q^{-1}D Q=
\begin{pmatrix}
0 & \hat D \\
0 & \tilde D
\end{pmatrix}, \qquad
\hat D \in\M_{e,r-e}(\cao_L),\,
\tilde  D \in\M_{r-e}(\cao_L).
$$
Denote
$\cag'=\{\tilde D \in\M_{r-e}(\cao_L)\colon D\in\cag^*\}\cup
\{\hat D \in\M_{e,r-e}(\cao_L)\colon D\in\cag^*\}$.

If $\tilde x\in k^{r-e}$ and
$\tilde x\in\cap_{D'\in\cag'}\Ker\left(D'\otimes k\right)$ then for any
$D \in\cag^*$
$$
\begin{pmatrix}
0 & \hat D \\
0 & \tilde D
\end{pmatrix}
\begin{pmatrix}
0\\
\tilde x
\end{pmatrix}=
\begin{pmatrix}
0\\
0
\end{pmatrix}
$$
which implies that
$$
\overline x=Q
\begin{pmatrix}
0\\
\tilde x
\end{pmatrix}
\in\cap_{D\in\cag^*}\Ker\left(D\otimes k\right).
$$
Then $\overline x$ is the reduction of an element from
$\cap_{D\in\cag^*}\Kera D$ by condition (i). It gives $\tilde x=0$,
i.e., $\cap_{D'\in\cag'}\Ker\left(D'\otimes k\right)=\{0\}$.
It remains to apply Lemma~\ref{ker_zero}.

II. If $\phi(X)=QX\in\call_r^r$ then
$\tilde\Lambda =\phi^{-1}\circ\Lambda\circ\phi$
is of type $\tilde v $. By Proposition~\ref{hon}~(i), $\tilde\Lambda $
is the logarithm of a formal group law $\tilde\Phi $ over $\cao_L$,
and $\phi\in\Hom_{\cao_L}(\tilde\Phi ,\Phi)$. Furthermore,
$\phi^{-1}\circ\sigma\circ\phi\in\Aut_{\cao_L}\tilde\Phi $
for any $\sigma\in\cag$ which defines an action of $\cag$ on
$\tilde\Phi $. If $D+I_r$ is the linear coefficient of $\sigma$, then
the linear coefficient of
$\phi^{-1}\circ\sigma\circ\phi$ is equal to $(Q^{-1}DQ+I_r)$.
Therefore Proposition~\ref{hon}~(iii) implies
the existence of $w \in\M_r(\cae)$
such that $\tilde v Q^{-1}D Q=w \tilde v $.
For $1\le i \le m$, let $w_i$ correspond to $D_i$ and
$$
\tilde v =\begin{pmatrix}
u  & * \\
\tilde u   & *
\end{pmatrix}, \quad
w_i=
\begin{pmatrix}
\hat z_i & \hat w_i \\
\tilde z_i  & \tilde w_i
\end{pmatrix},
$$
where $u,\hat z_i \in\M_e(\cae)$,
$\tilde u , \tilde z_i \in\M_{r-e,e}(\cae)$,
$\hat w_i\in\M_{e,r-e}(\cae)$,
$\tilde w_i \in\M_{r-e}(\cae)$.
Then
$$
\sum_{i=1}^m
\begin{pmatrix}
\hat C_i & \tilde C_i
\end{pmatrix}
w_i\tilde v =
\sum_{i=1}^m
\begin{pmatrix}
\hat C_i & \tilde C_i
\end{pmatrix}
\begin{pmatrix}
\hat z_i & \hat w_i \\
\tilde z_i  & \tilde w_i
\end{pmatrix}
\begin{pmatrix}
u  & * \\
\tilde u   & *
\end{pmatrix}
$$
$$
=\sum_{i=1}^m
\begin{pmatrix}
\hat C_i & \tilde C_i
\end{pmatrix}
\begin{pmatrix}
u  & * \\
\tilde u   & *
\end{pmatrix}
\begin{pmatrix}
0 & \hat D_i \\
0  & \tilde D_i
\end{pmatrix}=
\begin{pmatrix}
0 & *
\end{pmatrix}.
$$
It gives
$\sum_{i=1}^m(\hat C_i\hat z_i+\tilde C_i\tilde z_i)u+
\sum_{i=1}^m(\hat C_i\hat w_i+\tilde C_i\tilde w_i)\tilde u =0$.
Since $\hat w_i\equiv\hat D_i\mod\bt$ and
$\tilde w_i\equiv\tilde D_i\mod\bt$, we obtain that
$\sum_{i=1}^m\hat C_i\hat w_i+\tilde C_i\tilde w_i\in\M_{r-e}(\cae)$
is invertible, and hence, $\tilde u =zu$ for some $z\in\M_{r-e,e}(\cae)$.

Let $\lambda\in\call_e^e$ be of type $u$ and
$\lambda(X)\equiv X\mod\deg2$. Then by Proposition~\ref{hon} (i),
$\lambda$ is the logarithm of a formal group law $F$ over $\cao_L$.
Since
$$
\tilde v I_{r,e}=
\begin{pmatrix}
u  \\
\tilde u
\end{pmatrix}=
\begin{pmatrix}
I_e \\
z
\end{pmatrix}u,
$$
Proposition~\ref{hon}~(iii) implies that
$\tilde f =\tilde\Lambda ^{-1}\circ I_{r,e}\lambda
\in\Hom_{\cao_L}(F,\tilde\Phi )$.
Moreover, $(Q^{-1}DQ+I_r)I_{r,e}=I_{r,e}$ for any $D\in\cag^*$,
and hence, by Proposition~\ref{homo}, we get $
(\phi^{-1}\circ\sigma\circ\phi)\circ\tilde f=\tilde f$
for any $\sigma\in\cag$.
Thus $(F,\tilde f)$ is a fixed pair for $(\tilde\Phi,\cag)$.

Now let $(F',f')$ be another fixed pair for $(\tilde\Phi,\cag)$
and the linear coefficient of $f'$ be
$$
Z =
\begin{pmatrix}
\tilde Z\\
\hat Z
\end{pmatrix}
\in\M_{r,e'}(\cao_L),
\qquad {\rm where}\quad\tilde Z\in\M_{e,e'}(\cao_L),\,
\hat Z\in\M_{r-e,e'}(\cao_L).
$$
Then $(Q^{-1}DQ+I_r)Z =Z $, and therefore,
$\hat D_i\hat Z=0$ and $\tilde D_i\hat Z=0$, $1\leq i\leq m$.
Since $\sum_{i=1}^m\hat C_i\hat D_i+\tilde C_i\tilde D_i$
is invertible, $\hat Z=0$, and hence, $Z=I_{r,e}\tilde Z$.
According to Proposition~\ref{hon} (ii),
there exists $u'\in\M_{e'}(\cae)$ such that
the logarithm $\lambda'$ of $F'$ is of type $u'$.
By Proposition~\ref{hon} (iii), we have $\tilde v Z =w'u'$
for some
$$
w'=
\begin{pmatrix}
\tilde w' \\
*
\end{pmatrix}\in\M_{r,e'}(\cae),
\qquad {\rm where}\quad\tilde w'\in\M_{e,e'}(\cae).
$$
Then $u\tilde Z=\tilde w'u'$,
and Proposition~\ref{hon}~(iii) implies that
$g=\lambda^{-1}\circ\tilde Z\lambda'\in\Hom_{\cao_L}(F',F)$.
Besides, by Proposition~\ref{homo}, we have $f'=\tilde f \circ g$.

If $g'\in\Hom_{\cao_L}(F',F)$ is such
that $f'=\tilde f \circ g'$, and $Z'\in M_{e,e'}(\cao_L)$ is
the linear coefficient of $g'$, then
$I_{r,e}Z'=Z =I_{r,e}\tilde Z$, and hence, $Z'=\tilde Z$.
Then Proposition~\ref{homo} implies that $g'= g$, and thus
$(F,\tilde f )$ is a universal fixed pair for $(\tilde\Phi ,\cag)$.

Finally, let $f=\phi\circ \tilde f $. Then $f\in\Hom_{\cao_L}(F,\Phi)$,
the linear coefficient of $f$ is $QI_{r,e}$, and $(F,f)$ is a
universal fixed pair for $(\Phi,\cag)$.

\end{proof}

\begin{glufix}\label{glufix}
Let $\Phi$ be a formal group law over $\Z$ provided with an action
of a group $\cag$, and for any prime $p$, the logarithm of $\Phi$
be of $p$-type $v_p$. If there exist $Q\in\GL_r(\Z)$ and a formal
group law $F$ over $\Z$ such that for any prime $p$, $Q$ satisfies
condition {\rm (iii)} of Theorem~\ref{ufix_ex},{\rm I} for $L=\Q_p$,
and the logarithm of $F$ is of $p$-type $u_p$, where $u_p$ is the
upper-left $e\times e$-submatrix of $Q^{-1}v_pQ$, then there exists
$f\in\Hom_{\Z}(F,\Phi)$ such that the linear coefficient of $f$ is
equal to $QI_{r,e}$ and $(F,f)$ is a universal fixed pair for
$(\Phi,\cag)$.
\end{glufix}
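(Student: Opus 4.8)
The plan is to argue one prime at a time and then glue. For each prime $p$ the ``moreover'' part of Theorem~\ref{ufix_ex} will furnish a universal fixed pair for $(\Phi_{\Z_p},\cag)$ whose underlying formal group law is strongly isomorphic to $F_{\Z_p}$ and whose structure map has linear coefficient $QI_{r,e}$; by Honda's Proposition~\ref{homo} such a structure map is unique over $\Q_p$, hence is the image of one and the same power series over $\Q$; and a power series over $\Q$ which is $p$-integral for every $p$ already lies over $\Z$. The main work lies in checking this compatibility across all primes and that the pair obtained by descent remains universal over $\Z$.

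First I would extract the local input. Fix a prime $p$ and apply Theorem~\ref{ufix_ex} with $L=\Q_p$. The set $\cag^*$ it attaches to $\Phi_{\Z_p}$ equals $\{D_\sigma:\sigma\in\cag\}$, where $D_\sigma+I_r$ is the linear coefficient of $\sigma$, since by Proposition~\ref{homo} an endomorphism of $\Phi$ over $\Q_p$ is pinned down by its linear coefficient; hence $\cag^*$ consists of integer matrices independent of $p$, and so is the integer $e=\rk\bigcap_{D\in\cag^*}\Ker D$. By hypothesis $Q$ satisfies condition~(iii) of part~I of Theorem~\ref{ufix_ex} for this $L$, so $\cag^*$ satisfies the equivalent conditions and the ``moreover'' clause produces a universal fixed pair $(F_p,f_p)$ for $(\Phi_{\Z_p},\cag)$ with $f_p$ of linear coefficient $QI_{r,e}$ and with the logarithm of $F_p$ of type $u_p$. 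Since the logarithm of $F$ over $\Q_p$ is, by hypothesis, also of type $u_p$, Proposition~\ref{hon}(iii) with $D=I_e$ gives mutually inverse homomorphisms between $F_{\Z_p}$ and $F_p$, that is, a strong isomorphism $\theta_p\colon F_{\Z_p}\to F_p$. Replacing $(F_p,f_p)$ by the still universal pair $(F_{\Z_p},f_p\circ\theta_p)$ and noting that $\theta_p$ has linear coefficient $I_e$, I may assume $F_p=F_{\Z_p}$ with $f_p$ of linear coefficient $QI_{r,e}$.

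Now I would glue. Let $\Lambda$ and $\lambda$ be the logarithms over $\Q$ of $\Phi$ and of $F$, and set $f:=\Lambda^{-1}\circ(QI_{r,e})\lambda$, a homomorphism $F_\Q\to\Phi_\Q$. For every $p$, both $f_{\Q_p}$ and $f_p$ are homomorphisms $F_{\Q_p}\to\Phi_{\Q_p}$ of linear coefficient $QI_{r,e}$, so they coincide by Proposition~\ref{homo}; in particular $f_{\Q_p}=f_p$ has coefficients in $\Z_p$. Since $\bigcap_p\Z_p=\Z$ inside $\Q$, the series $f$ has coefficients in $\Z$, hence $f\in\Hom_\Z(F,\Phi)$ with linear coefficient $QI_{r,e}$ and $f_{\Z_p}=f_p$ for all $p$. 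As each $(F_{\Z_p},f_p)$ is in particular a fixed pair, the identity $\sigma\circ f=f$ holds after base change to every $\Z_p$, hence over $\Z$; thus $(F,f)$ is a fixed pair for $(\Phi,\cag)$.

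It remains to prove universality, which is the crux. Let $(F',f')$ be a fixed pair for $(\Phi,\cag)$ over $\Z$, with $f'$ of linear coefficient $B$ and logarithm $\lambda'$ over $\Q$. For each $p$, $(F'_{\Z_p},f'_{\Z_p})$ is a fixed pair for $(\Phi_{\Z_p},\cag)$, so universality of $(F_{\Z_p},f_p)$ gives a unique $g_p\in\Hom_{\Z_p}(F',F)$ with $f'=f\circ g_p$. Comparing linear coefficients yields $B=(QI_{r,e})C_p$ with $C_p$ the linear coefficient of $g_p$; left-multiplying by the left inverse $I_{r,e}^TQ^{-1}$ of $QI_{r,e}$ gives $C_p=I_{r,e}^TQ^{-1}B=:C$, independent of $p$. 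By Proposition~\ref{homo} every $g_p$ is then the base change of $g:=\lambda^{-1}\circ C\lambda'$, so $g$ has coefficients in $\Z$, lies in $\Hom_\Z(F',F)$, and satisfies $f'=f\circ g$ (this identity holds after every base change). Any $g'\in\Hom_\Z(F',F)$ with $f'=f\circ g'$ has linear coefficient $C$ as well, so $g'=g$ by Proposition~\ref{homo}. Hence $(F,f)$ is the required universal fixed pair. As flagged, the only delicate point is the cross-prime compatibility of the local constructions; it is forced by Proposition~\ref{homo} — a homomorphism over a $\Q$-algebra is determined by its linear coefficient through the logarithms — together with the elementary fact $\bigcap_p\Z_p=\Z$.
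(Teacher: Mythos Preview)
Your argument is correct and follows essentially the same route as the paper's proof: for each prime $p$ invoke Theorem~\ref{ufix_ex} to get a local universal fixed pair with linear coefficient $QI_{r,e}$, use Proposition~\ref{homo} to identify the resulting local structure maps with the single rational series $f=\Lambda^{-1}\circ(QI_{r,e})\lambda$, conclude $f\in\Hom_\Z(F,\Phi)$ by $\bigcap_p\Z_p=\Z$, and handle universality by the same descent (your left-inverse $I_{r,e}^TQ^{-1}$ extracts exactly the block $\tilde Z$ that the paper uses). The only cosmetic difference is that you first transport $(F_p,f_p)$ along a strong isomorphism to $(F_{\Z_p},f_p\circ\theta_p)$, whereas the paper keeps $F_p$ separate and links it to $F$ via $\lambda_p^{-1}\circ\lambda$; the content is the same.
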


\begin{proof}
By Theorem~\ref{ufix_ex} for any prime $p$, there exists
a universal fixed pair $(F_p,f_p)$ for $(\Phi,\cag)$ such that
the logarithm $\lambda_p$ of $F_p$ is of type $u_p$, and the linear
coefficient of $f_p$ is $QI_{r,e}$. Proposition~\ref{hon} (iii) implies
that $v_pQI_{r,e}=w_pu_p$ for some $w_p\in\M_{r,e}(\cae_p)$, and then
$f=\Lambda^{-1}\circ QI_{r,e}\lambda\in\Hom_{\Z_p}(F,\Phi)$, where
$\Lambda$ and $\lambda$ denote the logarithms of $\Phi$ and $F$,
respectively. Since it is true for any prime $p$,
we have $f\in\Hom_{\Z}(F,\Phi)$. Moreover by Proposition~\ref{homo},
we get $f=f_p\circ(\lambda_p^{-1}\circ\lambda)$, and hence,
$\sigma\circ f=\sigma\circ f_p\circ(\lambda_p^{-1}\circ\lambda)=f$
for any $\sigma\in\cag$. Thus $(F,f)$ is a fixed pair for
$(\Phi,\cag)$.

Let $(F',f')$ be another fixed pair for $(\Phi,\cag)$.
For every prime $p$, there exists $g_p\in\Hom_{\Z_p}(F',F_p)$
such that $f'=f_p\circ g_p$. Denote the linear coefficients of
$f'$ and $g_p$ by $Z\in\M_{r,e'}(\Z)$ and $Z_p\in\M_{e,e'}(\Z_p)$,
respectively, and let
$$
Q^{-1}Z=
\begin{pmatrix}
\tilde Z\\
\hat Z
\end{pmatrix}
\in\M_{r,e'}(\Z),
\qquad {\rm where}\quad\tilde Z\in\M_{e,e'}(\Z),\,
\hat Z\in\M_{r-e,e'}(\Z).
$$
Then $Z=QI_{r,e}Z_p$ implies $\hat Z=0$ and $\tilde Z=Z_p$,
in particular, the entries of $Z_p$ are in $\Z$.
According to Proposition~\ref{hon} (ii), for every prime $p$,
there exists $u'_p\in\M_{e'}(\cae_p)$ such that the logarithm
$\lambda'$ of $F'$ is of type $u'_p$. By Proposition~\ref{hon} (iii),
we have $u_p\tilde Z=w'_pu'_p$ for some $w'_p\in\M_{e,e'}(\cae_p)$,
and then $g=\lambda^{-1}\circ\tilde Z\lambda'\in\Hom_{\Z_p}(F',F)$.
Since it is true for any prime $p$, we get $g\in\Hom_{\Z}(F',F)$.
Besides, by Proposition~\ref{homo}, we have $f'=f\circ g$.

If $g'\in\Hom_{\Z}(F',F)$ is such that $f'=f\circ g'$, and
$Z'\in\M_{e,e'}(\Z)$ is the linear coefficient of $g'$, then
$QI_{r,e}Z'=Z=QI_{r,e}\tilde Z$, and hence, $Z'=\tilde Z$. Then
Proposition~\ref{homo} implies that $g'=g$, and thus
$(F,f)$ is a universal fixed pair for $(\Phi,\cag)$.
\end{proof}

\section{Weil restriction for formal group laws}

Let $A$ be a ring, $B$ be an $A$-algebra which is a free $A$-module
of finite rank with basis $e_0,\ldots,e_{n-1}$
that we fix throughout this section. For a positive
integer $d$, denote $B_d=B[[x_1,\ldots,x_d]]$. Define
the {\it Weil restriction} functor $\car_{B/A}(\Spf B_d)$ from
the category of nilpotent $A$-algebras to the category of sets
by $\car_{B/A}(\Spf B_d)(N)=\Spf B_d(N\otimes_AB)$.
Denote $A_d=A[[z_1,\ldots,z_{nd}]]$.
For any nilpotent $A$-algebra $N$, the map
$$\rho_d(N):\Spf A_d(N)\to \car_{B/A}(\Spf B_d)(N)$$
defined by
$\Bigl(\rho_d(N)(s)\Bigr)(x_l)=\sum_{j=0}^{n-1}s(z_{jd+l})\otimes e_j$,
$1\leq l\leq d$, is a bijection. Therefore, it gives an isomorphism
$\rho_d:\Spf A_d\to \car_{B/A}(\Spf B_d)$ that depends on
the fixed basis. It implies, in particular, that
$\car_{B/A}(\Spf B_d)$ is a formal scheme over $A$.
Let $f: B_{d'}\to B_d$ be a continuous homomorphism
of $B$-algebras. Denote by $\car_{B/A}(f): A_{d'}\to A_{d}$
the unique continuous homomorphism of $A$-algebras such that
$$
\sum_{j=0}^{n-1}\car_{B/A}(f)_{jd'+l}(z_1,\ldots,z_{nd})e_j=
f_l\left(\sum_{i=0}^{n-1}z_{id+1}e_i,\ldots,
\sum_{i=0}^{n-1}z_{id+d}e_i\right)\in B\otimes A_d
$$
for any $1\leq l\leq d'$, where $f_m\in B_{d}$ and
$\car_{B/A}(f)_m\in A_d$ are the images of $z_m$ with respect
to $f$ and $\car_{B/A}(f)$, respectively.
Certainly, $\car_{B/A}(f)$ also depends on the chosen basis.

\begin{spf}\label{spf}
$$
(\Spf f)(\rho_{d})=\rho_{d'}(\Spf \car_{B/A}(f)).
$$
\end{spf}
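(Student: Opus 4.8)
The plan is to verify the identity by testing both sides on an arbitrary nilpotent $A$-algebra $N$ and an arbitrary point $s\in\Spf A_{d'}(N)$, and then invoking the Yoneda lemma (the version for formal schemes, \ref{yo}) to conclude. Since $\rho_{d'}$ is a bijection (in fact a natural isomorphism), it suffices to check that for every such $N$ the two maps $\Spf A_{d'}(N)\to\car_{B/A}(\Spf B_d)(N)$ agree; unwinding the definitions, both sides send $s$ to an element of $\Spf B_d(N\otimes_A B)$, i.e. to a continuous $B$-algebra homomorphism $B_d\to (N\otimes_A B)\oplus$-type data, which is determined by the images of the variables $x_1,\dots,x_d$. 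So the whole proof reduces to comparing the images of each $x_l$, $1\le l\le d$, under the two composite homomorphisms.

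First I would compute the left-hand side. By the definition of $\rho_{d'}$, the point $\rho_{d'}(N)(s)\in\car_{B/A}(\Spf B_{d'})(N)=\Spf B_{d'}(N\otimes_A B)$ sends $x_{l'}$ to $\sum_{j=0}^{n-1}s(z_{jd'+l'})\otimes e_j$ for $1\le l'\le d'$. Applying $\Spf f$ post-composes with $f\colon B_{d'}\to B_d$, so $x_l$ is sent to the image under this point of $f_l\in B_{d'}$, which by substitution is $f_l$ evaluated at $x_{l'}\mapsto \sum_j s(z_{jd'+l'})\otimes e_j$. Next I would compute the right-hand side: $\Spf\car_{B/A}(f)$ pre-composes $s$ with $\car_{B/A}(f)\colon A_{d'}\to A_d$, giving the point $s\circ\car_{B/A}(f)\in\Spf A_d(N)$, and then $\rho_d(N)$ turns this into the point of $\car_{B/A}(\Spf B_d)(N)$ sending $x_l$ to $\sum_{j=0}^{n-1} (s\circ\car_{B/A}(f))(z_{jd+l})\otimes e_j = \sum_{j=0}^{n-1} s\bigl(\car_{B/A}(f)_{jd+l}\bigr)\otimes e_j$.

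It remains to see that these two elements of $N\otimes_A B$ coincide. This is exactly where the defining relation for $\car_{B/A}(f)$ enters: by definition $\sum_{j=0}^{n-1}\car_{B/A}(f)_{jd+l}\,e_j = f_l\bigl(\sum_i z_{id+1}e_i,\dots,\sum_i z_{id+d}e_i\bigr)$ inside $B\otimes_A A_d$. Applying $\mathrm{id}_B\otimes s$ to both sides of this identity (legitimate since $s$ is an $A$-algebra homomorphism and tensoring is functorial, and the power series only involve finitely many variables modulo the relevant ideal because $N$ is nilpotent) turns the left side into $\sum_j s(\car_{B/A}(f)_{jd+l})\otimes e_j$, which is the right-hand side of \ref{spf}, and turns the right side into $f_l$ evaluated at $\sum_i s(z_{id+l'})\otimes e_i$ for $1\le l'\le d$, which is the left-hand side of \ref{spf}. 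Hence the images of every $x_l$ agree, so the two points of $\car_{B/A}(\Spf B_d)(N)$ are equal; naturality in $N$ is immediate from naturality of $\rho_d$, $\rho_{d'}$ and of $\Spf(-)$, and \ref{yo} gives the equality of morphisms.

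The only genuinely delicate point is bookkeeping: keeping straight which index set (size $d$ versus $d'$, and the factor of $n$ coming from the basis $e_0,\dots,e_{n-1}$) goes with which arrow, and checking that the substitution of a formal power series into another is compatible with applying $\mathrm{id}_B\otimes s$. The latter is where nilpotence of $N$ is used — it ensures all the formal series in play become polynomials after passing to a suitable quotient $B_d/\caj^k$, so the manipulations are purely algebraic and no convergence issue arises. I expect this index-matching to be the main (though routine) obstacle; once the defining identity for $\car_{B/A}(f)$ is written out with the correct indices, the verification is mechanical.
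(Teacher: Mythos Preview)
Your approach is exactly the paper's: evaluate both sides on an arbitrary $s$ in the domain, compare the images of each $x_l$, and conclude by the defining identity of $\car_{B/A}(f)$. However, you have systematically swapped $d$ and $d'$: since $f\colon B_{d'}\to B_d$, we have $\Spf f\colon\Spf B_d\to\Spf B_{d'}$ and $\Spf\car_{B/A}(f)\colon\Spf A_d\to\Spf A_{d'}$, so both sides of the statement are morphisms $\Spf A_d\to\car_{B/A}(\Spf B_{d'})$; the test point $s$ must lie in $\Spf A_d(N)$, the series $f_l$ belong to $B_d$ (not $B_{d'}$) for $1\le l\le d'$, and the relevant components of $\car_{B/A}(f)$ are $\car_{B/A}(f)_{jd'+l}$, not $\car_{B/A}(f)_{jd+l}$. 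Once you correct the direction of the arrows your three paragraphs reproduce the paper's computation verbatim; as you yourself anticipated, the only obstacle was the bookkeeping.
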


\begin{proof} Let $N$ be a nilpotent $A$-algebra and
$s\in\Hom_A(A_{d},N)$. Then
$$
\biggl((\Spf f)\Bigl(\rho_{d}(N)(s)\Bigr)\biggr)(x_l)
=\Bigl(\rho_{d}(N)(s)\Bigr)(f(x_l))
$$
$$
=f_l\Bigl(\rho_{d}(N)(s)(x_1),\dots,\rho_{d}(N)(s)(x_{d})\Bigr)
$$
$$
=f_l\left(\sum_{i=0}^{n-1}s(z_{id+1})\otimes e_i,\ldots,
\sum_{i=0}^{n-1}s(z_{id+d})\otimes e_i\right).
$$
for any $1\leq l\leq d'$. On the other hand,
$$
\biggl(\rho_{d'}(N)\Bigl((\Spf \car_{B/A}(f))(s)\Bigr)\biggr)(x_l)=
\sum_{j=0}^{n-1}\Bigl((\Spf \car_{B/A}(f))(s)\Bigr)(z_{jd'+l})\otimes e_j
$$
$$
=\sum_{j=0}^{n-1}s\Bigl(\car_{B/A}(f)(z_{jd'+l})\Bigr)\otimes e_j=
\sum_{j=0}^{n-1}\car_{B/A}(f)_{jd'+l}\Bigl(s(z_1),\ldots, s(z_{nd})\Bigr)e_j.
$$
\end{proof}

Let $F$ be a $d$-dimensional formal group law over $B$.
Then $F$ provides a formal group scheme structure to
$\Spf B_d$, and hence, also to $\car_{B/A}(\Spf B_d)$.
Further, $\rho_d$ allows to define a formal group scheme structure
on $\Spf A_d$, which gives an $nd$-dimensional formal group law
depending on the chosen basis. We denote this formal group law by
$\car_{B/A}(F)$.

\begin{hom_wr}\label{hom_wr}
If $F$ and $F'$ are formal group laws over $B$, and
$f\in\Hom_B(F,F')$, then
$\car_{B/A}(f)\in\Hom_A(\car_{B/A}(F),\car_{B/A}(F'))$.
\end{hom_wr}

\begin{proof}
Denote by $d,d'$ the dimensions of $F,F'$, respectively.
Let $N$ be a nilpotent $A$-algebra and
$s_1,s_2\in\Hom_A(A_d,N)$. Then Proposition~\ref{spf} yields
$$\rho_{d'}(N)\Bigl(\car_{B/A}(f)\bigl(\car_{B/A}(F)(s_1,s_2)\bigr)\Bigr)
=f\Bigl(F\bigl(\rho_d(N)(s_1),\rho_d(N)(s_2)\bigr)\Bigr)$$
$$=F'\Bigl(f\bigl(\rho_d(N)(s_1)\bigr),f\bigl(\rho_d(N)(s_2)\bigr)\Bigr)$$
$$=\rho_{d'}(N)\Bigl(\car_{B/A}(F')
\bigl(\car_{B/A}(f)(s_1),\car_{B/A}(f)(s_2)\bigr)\Bigr).$$
\end{proof}

Suppose that $A$ is of characteristic 0. Then
$\caa=A\otimes_{\Z}\Q$ and $\cab=B\otimes_{\Z}\Q$ are
$\Q$-algebras, and $\cab$ is an $\caa$-algebra of rank $n$.
We consider $\cab$ as a free $\caa$-module with the same
fixed basis $e_0,\ldots,e_{n-1}$.
Denote by $\lambda$ the logarithm of $F$.

\begin{log_wr} \label{log_wr}
$\car_{\cab/\caa}(\lambda)$ is the logarithm of $\car_{B/A}(F)$.
\end{log_wr}

\begin{proof} Denote $\cab_d=\cab[[x_1,...,x_d]]$ and
$\caa_d=\caa[[z_1,...,z_{nd}]]$. Let
$\rho^*_d:\Spf\caa_d\to\car_{\cab/\caa}(\Spf\cab_d)$ be defined
similarly to $\rho_d$. Since $\cab=\caa\otimes_AB$, the formal
group schemes $\car_{\cab/\caa}(\Spf \cab_d)$ and $\car_{B/A}(\Spf
B_d)_{\caa}$ coincide as well as the maps $\rho^*_d$ and
$(\rho_d)_{\caa}$. Hence,
$\car_{\cab/\caa}(F_{\cab})=\car_{B/A}(F)_{\caa}$. Besides, it is
clear that $\car_{\cab/\caa}((\F_a)^d_{\cab})=(\F_a)^{nd}_{\caa}$.
According to Proposition~\ref{hom_wr}, we get
$\car_{\cab/\caa}(\lambda)
\in\Hom_{\caa}(\car_{B/A}(F)_{\caa},(\F_a)^{nd}_{\caa})$. By
definition of $\car_{\cab/\caa}$, we have
$\sum_{j=1}^{n-1}\car_{\cab/\caa}(\lambda)_{jd+l}e_j\equiv
\sum_{i=1}^{n-1}z_{id+l}e_i\mod\deg2$. Hence,
$\car_{\cab/\caa}(\lambda)_{jd+l}\equiv z_{jd+l}\mod\deg2$. Thus
$\car_{\cab/\caa}(\lambda)$ is the logarithm of $\car_{B/A}(F)$.
\end{proof}

The next proposition shows a relation between two Weil restrictions
of the same formal group law defined with the aid of two distinct bases.

\begin{tran_wr} \label{tran_wr}
Let $e_0,\ldots, e_{n-1}$ and $ e'_0,\ldots, e'_{n-1}$
be two free $A$-bases of $B$ and $F$ be a formal group law over $B$.
If $\rho_d:\Spf A_d\to \car_{B/A}(\Spf B_d)$ and
$\rho'_d:\Spf A'_d\to \car_{B/A}(\Spf B_d)$ are
the corresponding maps, $\car_{B/A}(F)$ and $\car'_{B/A}(F)$ are
the corresponding formal group laws over $A$, then there exists
$f\in\Hom_A(\car'_{B/A}(F),\car_{B/A}(F))$ with linear coefficient
$I_d\otimes W$, where $W$ is the transition matrix from
$e'_0,\ldots,e'_{n-1}$ to $e_0,\ldots, e_{n-1}$.
\end{tran_wr}

\begin{proof}
Suppose that $W=\{w_{i,j}\}_{0\leq i,j\leq n-1}\in\GL_n(A)$,
i.e., $e'_j=\sum_{j=0}^{n-1}w_{i,j}e_i$.
Define the continuous homomorphism  $g:A_d\to A'_d$
as follows: $g(z_{id+l})=\sum_{j=0}^{n-1}w_{i,j}z'_{jd+l}$.
Let $N$ be a nilpotent $A$-algebra and $s\in\Hom_A(A'_d,N)$.
Then
$$
\rho_d(N)(\Spf g(s))(x_l)=\sum_{i=0}^{n-1}s(g(z_{id+l}))\otimes e_i
=\sum_{i=0}^{n-1}\sum_{j=0}^{n-1}w_{i,j}s(z'_{jd+l})\otimes e_i.
$$
On the other hand
$$
\rho'_d(N)(s)(x_l)=\sum_{j=0}^{n-1}s(z'_{jd+l})\otimes e'_j
=\sum_{j=0}^{n-1}\sum_{i=0}^{n-1}s(z'_{jd+l})\otimes w_{i,j}e_i.
$$
Thus $\rho_d\circ (\Spf g)=\rho'_d$ and therefore $\Spf g$ induces
a homomorphism
from $\car'_{B/A}(F)$ to $\car_{B/A}(F)$ whose linear
coefficient is equal to $I_d\otimes W$.
\end{proof}

\section{Galois action on the Weil restriction
of split tori}

Let $A$ be a ring, $B$ be an $A$-algebra which is a free
$A$-module of finite rank. Let $S$ be a smooth separated
scheme over $B$ of finite type. Define {\it Weil restriction}
functor $\car_{B/A}(S)$ from the category of $A$-algebras to
the category of sets by $\car_{B/A}(S)(R)=S(R\otimes_A B)$.

\begin{wr_sch}\cite[Section 7.6, Theorem 4, Proposition 5]{BLR}\label{wr_sch}
The functor $\car_{B/A}(S)$ is a separated smooth scheme over $A$.
\end{wr_sch}

If $G$ is a group scheme over $A$ such that its underlying scheme $S$
is smooth, separated and of finite type, then $\car_{B/A}(S)(R)$
is a group, which defines a group scheme denoted by $\car_{B/A}(G)$.

Let $K/L$ be a finite Galois extension of fields, and $T$ be a torus
over $L$ represented by the Hopf algebra $H$. It is easy to see
that if a field $K'$ splits $T$, then $KK'$ splits $\car_{K/L}(T_K)$,
and in particular, the latter scheme is a torus. Define a right
action of ${\rm Gal}(K/L)$ on $\car_{K/L}(T_K)$ as follows:
if $\sigma\in{\rm Gal}(K/L)$, $R$ is an $L$-algebra and
$s\in\car_{K/L}(T_K)(R)=
\Hom_K(H\otimes_LK,R\otimes_LK)$, then
$s\sigma=\hat\sigma^{-1}\circ s\circ \tilde\sigma$,
where $\tilde\sigma:H\otimes_L K\to H\otimes_L K$ and
$\hat\sigma:R\otimes_LK\to R\otimes_LK$ are induced by $\sigma$.
For every $L$-algebra $R$, the map
$$
\Omega(R):T(R)
\to\car_{K/L}(T_K)(R)
$$
defined by $\Omega(R)(g)(a\otimes v)=g(a)(1\otimes v)$,
$a\in H$, $v\in K$, is a homomorphism.
Hence, it gives a  morphism $\Omega:T\to\car_{K/L}(T_K)$.






Suppose that $L$ is a local or global field 
and denote its ring of integers by $\cao_L$. Let $S$ be a smooth
separated scheme over $L$. A smooth separated scheme $\cay$ over
$\cao_L$ is called a {\it N\'eron model} of $S$, if it is a model of
$S$, i.e. $\cay_L=S$, and satisfies the following universal property:
for any smooth scheme $\caz$ over $\cao_L$ and any $L$-morphism
$g:\caz_L\to \cay_L=S$ there exists a unique $\cao_L$-morphism
$h:\caz\to \cay$ such that $h_L=g$ (see \cite{BLR}, Section 1.2,
Definition 1). Evidently, if a scheme $S$ admits a
N\'eron model, then it is unique up to isomorphism.


\begin{mod_gr}\cite[Section 1.2, Proposition 6]{BLR}\label{mod_gr}
Let $G$ be a smooth separated group scheme over $L$ such that
its underlying scheme $S$ admits a N\'eron model $\cay$ over $\cao_L$.
Then $\cay$ admits a unique group scheme structure which induces
the original group scheme structure on $\cay_L=S$.
\end{mod_gr}



\begin{tor_mod}\cite[Section 10.1, Proposition 6]{BLR}\label{tor_mod}
Any torus over $L$ admits a N\'eron model over $\cao_L$.
\end{tor_mod}

By the above Proposition, the schemes $T$ and $\car_{K/L}(T_K)$
admit N\'eron models which we denote by $\cat$ and $\cau$,
respectively. Due to the universal property of N\'eron models,
the right action of $\Gal(K/L)$ on $\car_{K/L}(T_K)$ can be
extended to the right action on $\cau$, and $\Omega$ can be
extended to the morphism $\omega:\cat\to\cau.$

Let $S$ be a separated scheme over a ring $A$ provided
with an action of a finite group $\cag$. Define
the functor $S^\cag$ of fixed points by $S^\cag(R)=S(R)^\cag$.

\begin{fix_sch}\cite[Proposition 3.1]{Ed}\label{fix_sch}
The functor $S^\cag$ is represented by a closed subscheme of $S$.
\end{fix_sch}

\begin{tam_iso}\cite[Theorem 4.2]{Ed}\label{tam_iso}
If $K/L$ is tamely ramified, then
$\omega:\cat\to\cau$ is a closed immersion which induces
an isomorphism $\cat\to\cau^{\Gal(K/L)}$.
\end{tam_iso}

For a group scheme $G$, we denote by $G_0$ the connected
component of the unit in $G$.

The scheme $\cau_0$ is invariant with respect to the right
action of $\Gal(K/L)$ on $\cau$, so we obtain an action
on $\cau$.

\begin{cc_iso}\label{cc_iso}
The natural morphism
$({(\cau_0)}^{\Gal(K/L)})_0\to(\cau^{\Gal(K/L)})_0 $
is an isomorphism.
\end{cc_iso}

\begin {proof}
We describe how to construct the inverse morphism.
There is a natural morphism from $(\cau^{\Gal(K/L)})_0$
to $\cau_0$, and its image is a subscheme invariant
with respect to the action of ${\Gal(K/L)}$. That
gives a morphism from $(\cau^{\Gal(K/L)})_0$ to
${(\cau_0)}^{\Gal(K/L)}$, and the image of this morphism
is a connected subscheme. Thus we obtain a morphism from
$(\cau^{\Gal(K/L)})_0$ to $({(\cau_0)}^{\Gal(K/L)})_0$.
\end{proof}

Let $\cao_K$ denote the ring of integers of $K$.

\begin{wr_mfg}\cite[Lemma 3.1]{NX}\label{wr_mfg}
There exists a natural isomorphism from
$\car_{\cao_K/\cao_L}((\G_m)_{\cao_K}^d)$
to the connected component of the unit in
the N\'eron model for $\car_{K/L}((\G_m)_K^d)$.
\end{wr_mfg}

Suppose that $T$ is split over $K$, and its dimension is $d$.
Denote by $\cax$ the group of group-like elements in $H\otimes_LK$.
Then $\cax$ is a free $\Z$-module of rank $d$. We fix a free
$\Z$-basis $x_1,\ldots,x_d$ in $\cax$, and denote by $X$ the set
of variables $x_1,\ldots,x_d$. Then $\cax$ is identified
with $\Z^d$, $H\otimes_LK$ is identified with
the Hopf algebra $K[X,X^{-1}]$,
$T_K$ is identified with $(\mathbb{G}_m)_K^d$, $\cau$ is
identified with the N\'eron model for $\car_{K/L}((\G_m)_K^d)$,
and according to Proposition~\ref{wr_mfg}, $\cau_0$ is
identified with $\car_{\cao_K/\cao_L}((\G_m)_{\cao_K}^d)$.
Moreover we obtain a right action of $\Gal(K/L)$ on
$\car_{\cao_K/\cao_L}((\G_m)_{\cao_K}^d)$.
If $\sigma\in\Gal(K/L)$, $R$ is an $\cao_L$-algebra and
$$
s\in\car_{\cao_K/\cao_L}((\G_m)_{\cao_K}^d)(R)
=\Hom_{\cao_K}(\cao_K[X,X^{-1}],R\otimes_{\cao_L}\cao_K)
$$
then $s\sigma=\hat{\sigma'}^{-1}\circ s\circ \tilde\sigma'$,
where $\tilde\sigma'$ is the restriction of $\tilde\sigma$
to $\cao_K[X,X^{-1}]$, and $\hat\sigma'$ is induced by $\sigma$.

Let $T'$ be another torus over $L$ split over $K$ with
Hopf algebra $H'$ and $\eta\colon T\to T'$ be a morphism.
Denote by $\cat'$ and $\cau'$ the N\'eron models for $T'$
and $\car_{K/L}(T'_K)$. Then $\eta$ induces a morphism
$\cau_0\to\cau'_0$ which commutes with the actions of
$\Gal(K/L)$ on $\cau_0$ and $\cau'_0$. If a $\Z$-basis
$x'_1,\ldots,x'_{d'}$ in the group $\cax'$ of group-like
elements in $H'\otimes_LK$ is fixed, then $\eta$ induces
a morphism
$$
\tilde\eta\colon\car_{\cao_K/\cao_L}((\G_m)_{\cao_K}^d)
\to\car_{\cao_K/\cao_L}((\G_m)_{\cao_K}^{d'})
$$
which also commutes with the actions of $\Gal(K/L)$.

Suppose that $\eta$ corresponds to the homomorphism
$\cax'\to\cax$ given by the matrix
$C=\{c_{k,l}\}_{1\leq k\leq d;1\leq l\leq d'}\in M_{d,d'}(\Z)$
in the bases $x'_1,\ldots,x'_{d'}$ and $x_1,\ldots,x_d$.
If $R$ is an $\cao_L$-algebra and
$$
s\in\car_{\cao_K/\cao_L}((\G_m)_{\cao_K}^d)(R)
=\Hom_{\cao_K}(\cao_K[X,X^{-1}],R\otimes_{\cao_L}\cao_K),
$$
then $\tilde\eta(R)(s)(x'_l)=\sum_{k=1}^d c_{k,l}s(x_k)$.

\section{Main construction}

We keep the notations of the previous section.
Suppose that the characteristic of $L$ is equal to 0.
Denote the degree of $K/L$ by $n$ and fix a free $\cao_L$-basis
$e_0,\ldots,e_{n-1}$ of $\cao_K$ such that $ e_0=1$. Further, denote
$$
\cah= \cao_L[z_{j,l},z'_l]_{\scriptstyle 0\leq j\leq n-1
\atop\scriptstyle1\leq l\leq d}/\left(1-z'_l\can_{\cao_K/\cao_L}
\left(\sum_{0\le j\le n-1}z_{j,l} e_j\right)\right),
$$
where $\can_{\cao_K/\cao_L}$ is the norm map from $\cao_K$ to $\cao_L$.
For any $\cao_L$-algebra $R$, the map
$$
\nu(R):\Sp\cah(R)\to \car_{\cao_K/\cao_L}
(\Sp \cao_K[X,X^{-1}])(R)
$$
defined by
$\nu(R)(s)(x_l)=\sum_{m=0}^{n-1}s(z_{m,l})\otimes e_m$,
$1\leq l\leq d$, is a bijection. Therefore, it gives an isomorphism
$\nu:\Sp\cah\to \car_{\cao_K/\cao_L}((\G_m)_{\cao_K}^d)$
which allows to define the group scheme structure on $\Sp\cah$.

The augmentation ideal $\caj$ of the Hopf algebra $\cah$ is generated
by the elements $z_1,\ldots,z_{nd}$, where $z_{l}=z_{0,l}-1$,
$z_{jd+l}=z_{j,l}$, for $1\leq l\leq d$, $0<j\leq n-1$.
Further, $z_1+\caj^2,\ldots,z_{nd}+\caj^2$ form a free
$\cao_L$-basis of $\caj/\caj^2$. Thus the elements $z_1,\ldots,z_{nd}$
provide a coordinate system on $\wh{\Sp\cah}=\Spf\hat\cah_\caj$
and give rise to an $nd$-dimensional formal group law $\Phi $ with
logarithm $\Lambda$.

\begin{wr_mgs} \label{wr_mgs}
$\Phi =\car_{\cao_K/\cao_L}((\F_m)^d_{\cao_K})$.
\end{wr_mgs}

\begin{proof}
Let $N$ be a nilpotent $\cao_L$-algebra.
Denote $N'=N\otimes_{\cao_L}\cao_K$.
The set $\wh{\Sp\cah}(N)$ is
the subset of $\Sp\cah(\cao_L\oplus N)$ which consists of
the elements $s\in\Hom_{\cao_L}(\cah,\cao_L\oplus N)$ such that
$s(z_{0,l})-1,s(z_{j,l})\in N$,
$1\leq l\leq d$, $0<j\leq n-1$. On the other hand, the set
$\car_{\cao_K/\cao_L}((\F_m)^d_{\cao_K})(N)=(\F_m)^d_{\cao_K}(N')$
is the subset of $(\G_m)^d_{\cao_K}(\cao_K\oplus  N')$
which consists of the elements
$s'\in\Hom_{\cao_K}(\cao_K[X,X^{-1}],\cao_K\oplus N')$
such that $s'(x_l)-1\in N'$, $1\leq l\leq d$. Hence,
$\nu(\cao_L\oplus N)$ provides a bijection from $\wh{\Sp\cah}(N)$ to
$(\F_m)^d_{\cao_K}(N')$. Moreover, we have
$\hat\cah_\caj=\caa_d$ and $\nu(\cao_L\oplus N)$ restricted to
$\wh{\Sp\cah}(N)$ coincides with $\rho_d(N)$. The formal group laws
$\Phi $ and $\car_{\cao_K/\cao_L}((\F_m)^d_{\cao_K})$ come from
the group structure on $(\F_m)^d_{\cao_K}(N')$ with the aid
of the bijections $\nu(\cao_L\oplus N)$ and $\rho_d(N)$, respectively.
Hence, they coincide.
\end{proof}

\begin{wr_log} \label{wr_log}
For any $1\leq l\leq d$,
$$
\sum_{j=0}^{n-1}\Lambda_{jd+l}(z_1,\ldots,z_{nd})e_j=
\LL \left(\sum_{i=0}^{n-1}z_{id+l}e_i\right)
\in K[[z_1,\ldots,z_{nd}]].
$$
\end{wr_log}

\begin{proof}
It follows from Propositions~\ref{log_wr} and \ref{wr_mgs}.
\end{proof}

Let $\cah'$, $\nu'$, $\caj'$, $\Phi'$ be defined for
the torus $T'$ similar to $\cah$, $\nu$, $\caj$, $\Phi$.
The morphism $\eta\colon T\to T'$ induces a homomorphism
$\Phi\to\Phi'$ which corresponds to the morphism
$\nu'^{-1}\circ\tilde\eta\circ\nu\colon \Sp\cah\to\Sp\cah'$
in the bases $z_1,\ldots,z_{nd}$ and $z'_1,\ldots,z'_{nd'}$.

\begin{linco}\label{linco}
The linear coefficient of the homomorphism $\Phi\to\Phi'$ of
formal group laws induced by $\eta$ is $C^T\otimes I_n$.
\end{linco}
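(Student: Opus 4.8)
The plan is to unwind the homomorphism $\Phi\to\Phi'$ attached to $\eta$ into the three explicit coordinate maps $\nu$, $\tilde\eta$, $\nu'$, compute its effect on the cotangent spaces, and recognise the answer as a Kronecker product.

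Write $\psi=\nu'^{-1}\circ\tilde\eta\circ\nu\colon\Sp\cah\to\Sp\cah'$ and let $\psi^{\#}\colon\cah'\to\cah$ be the corresponding homomorphism of Hopf algebras. By the remark immediately before the statement, the homomorphism $\Phi\to\Phi'$ induced by $\eta$ is, in the coordinates $z_1,\dots,z_{nd}$ and $z'_1,\dots,z'_{nd'}$, the $nd'$-tuple of power series $\bigl(\psi^{\#}(z'_1),\dots,\psi^{\#}(z'_{nd'})\bigr)$ regarded in $\hat\cah_\caj=\cao_L[[z_1,\dots,z_{nd}]]$. Consequently its linear coefficient $D\in\M_{nd',nd}(\cao_L)$ is obtained by reading off the linear parts of the $\psi^{\#}(z'_m)$, so the task reduces to computing $\psi^{\#}(z'_m)$ modulo $\caj^2$.

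First I would trace a test point. For a point $s$ of $\wh{\Sp\cah}(N)$, with $N$ a nilpotent $\cao_L$-algebra, put $\xi_k=\nu(N)(s)(x_k)-1=\sum_{i=0}^{n-1}s(z_{id+k})\,e_i\in N\otimes_{\cao_L}\cao_K$, where $e_0=1$ and, for $i=0$, $z_{id+k}$ is to be read as $z_k=z_{0,k}-1$. Since $\eta$ corresponds to the character-group map with matrix $C$, the morphism $\tilde\eta$ carries the point $x_k\mapsto1+\xi_k$ to the point $x'_{l'}\mapsto\prod_{k=1}^d(1+\xi_k)^{c_{k,l'}}$, which equals $1+\sum_{k=1}^d c_{k,l'}\xi_k$ up to terms of degree $\ge2$ in the $\xi_k$. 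Applying $\nu'^{-1}$ and matching the coefficients of $e_j$ then shows that $\psi^{\#}(z'_{jd'+l'})\equiv\sum_{k=1}^d c_{k,l'}\,z_{jd+k}\pmod{\caj^2}$ for all $0\le j\le n-1$ and $1\le l'\le d'$ (it suffices to evaluate on the universal first-order point $\cah\to\hat\cah_\caj/\caj^2$, on which every product of two coordinates vanishes).

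It remains to read $D$ off this congruence and identify it with $C^T\otimes I_n$. The congruence says that $D$ has $(jd'+l',\,id+k)$-entry equal to $\delta_i^j c_{k,l'}$; in the chosen ordering of the coordinates, where the ``fast'' index is the character index and the ``slow'' index is the $\cao_K$-basis index $j$, this exhibits $D$ as the ``matrix of matrices'' $\{\delta_i^j C^T\}_{0\le i,j\le n-1}$, which by the convention recalled in the introduction is precisely $C^T\otimes I_n$. I expect this last piece of bookkeeping — lining up the two tensor factors and the transpose both with the paper's Kronecker-product convention and with the ordering of the $z$-coordinates — to be the only step demanding real attention; everything before it is a routine substitution into the definitions of $\nu$, $\nu'$ and $\tilde\eta$. (One can equally avoid expanding the product by hand: applying the logarithm $\LL$ of $\F_m$ linearises it, which combined with Propositions~\ref{homo} and~\ref{log_wr} yields the same $D$.)
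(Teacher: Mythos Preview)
Your proposal is correct and follows the same route as the paper: compose $\nu'^{-1}\circ\tilde\eta\circ\nu$, evaluate on a test point, read off the image of $z'_{jd'+l'}$ modulo $\caj^2$, and identify the resulting matrix with $C^T\otimes I_n$ via the paper's Kronecker-product convention. In fact your expansion of $\prod_k(1+\xi_k)^{c_{k,l'}}$ to first order is more careful than the paper's own proof, which writes the additive formula $\tilde\eta(R)(s)(x'_l)=\sum_k c_{k,l}\,s(x_k)$ (rather than the product) and thereby obtains an exact equality where only the congruence modulo $\caj^2$ is actually valid; your version is the honest computation.
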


\begin{proof}
Let $R$ be a $\cao_L$-algebra and $s\in\Hom_{\cao_L}(\cah,R)$.
Then
$$
(\tilde\eta\circ\nu)(R)(s)(x'_l)=
\sum_{k=1}^dc_{k,l}\nu(R)(s)(x_k)=
\sum_{m=0}^{n-1}\sum_{k=1}^dc_{k,l}s(z_{m,k})\otimes e_m.
$$
On the other hand,
$$
(\tilde\eta\circ\nu)(R)(s)(x'_l)=
(\nu'\circ{\nu'}^{-1}\circ\tilde\eta\circ\nu)(R)(s)(x'_l)=
\sum_{m=0}^{n-1}
({\nu'}^{-1}\circ\tilde\eta\circ\nu)(R)(s)(z'_{m,l})\otimes e_m.
$$
Therefore
$({\nu'}^{-1}\circ\tilde\eta\circ\nu)(z'_{m,l})=\sum_{k=1}^dc_{k,l}z_{m,k}$.
Hence, the homomorphism $\caj'/{\caj'}^2\to\caj/\caj^2$ induced
by ${\nu'}^{-1}\circ\tilde\eta\circ\nu$ maps $z'_{md'+l}$ to
$\sum_{k=1}^dc_{k,l}z_{md+k}$, i.e., the matrix of this
homomorphism in the bases $z'_1,\ldots,z'_{nd'}$ and
$z_1,\ldots,z_{nd}$ is $C\otimes I_n$.
\end{proof}

The isomorphism $\nu$ allows to define a right action of
$\Gal(K/L)$ on $\Sp\cah$, and hence, an action on $\cah$ and on
$\caj/\caj^2$. In the basis $z_1,\ldots,z_{nd}$, the action on
$\caj/\caj^2$ gives the representation
$\theta:\Gal(K/L)\to\GL_{nd}(\cao_L)$.

The group $\cax$ is invariant with respect to
the action of the  group $\Gal(K/L)$ on $H\otimes_LK$
which is defined by the intrinsic action on $K$.
Thus in the basis $x_1,\ldots,x_d$, it provides
the representation $\chi:\Gal(K/L)\to\GL_d(\Z)$.

There is a unique action of $\Gal(K/L)$ on
$\tilde\cao_K:=\Hom_{\cao_L}(\cao_K,\cao_L)$ such that
$\sigma \tilde a(\sigma a)= \tilde a(a)$ for any $a\in \cao_K$,
$\tilde a\in\tilde\cao_K$, $\sigma\in\Gal(K/L)$.
The elements $\tilde e_0,\ldots,\tilde e_{n-1}$
defined by $\tilde e_j( e_i)=\delta_i^j$ form a free
$\cao_L$-basis of $ \tilde\cao_K$ which gives
the representation $\psi:\Gal(K/L)\to\GL_n(\cao_L)$.

\begin{q_mat} \label{q_mat}
$\theta(\sigma)=\chi(\sigma)\otimes\psi(\sigma)$
for any $\sigma\in\Gal(K/L)$.
\end{q_mat}

\begin{proof}
Suppose that $\chi(\sigma)=\{a_{k,l}\}_{1\leq k,l\leq d}$,
$\psi(\sigma)=\{b_{i,j}\}_{0\leq i,j\leq n-1}$.
Let $R$ be an $\cao_L$-algebra and $s\in\Hom_{\cao_L}(\cah,R)$.
Then
$$\nu(R)(s)\in
\Hom_{\cao_K}(\cao_K[X,X^{-1}],R\otimes_{\cao_L}\cao_K).
$$
Besides, we have the natural mapping
$ \tilde\cao_K\times R\otimes_{\cao_L}\cao_K\to R$. Then
$$
\tilde e_j(-1+((\nu(R)(s))\sigma)(x_l))=
\sigma \tilde e_j(-1+(\nu(R)(s))(\tilde\sigma x_l))
$$
$$
=\sum_{i=0}^{n-1}b_{i,j} \tilde e_i\left(-1+(\nu(R)(s))
\left(\prod_{k=1}^dx_k^{a_{k,l}}\right)\right)
$$
$$
=\sum_{i=0}^{n-1}b_{i,j} \tilde e_i
\left(-1+\prod_{k=1}^d\left(1+\sum_{m=0}^{n-1}
s(z_{md+k})\otimes e_m\right)^{a_{k,l}}\right)
$$
$$
\equiv\sum_{i=0}^{n-1}b_{i,j} \tilde e_i
\left(\sum_{k=1}^da_{k,l}\sum_{m=0}^{n-1}s( z_{md+k})\otimes e_m\right)
$$
$$
=\sum_{i=0}^{n-1}\sum_{k=1}^db_{i,j}a_{k,l}s( z_{id+k}) \mod s(\caj^2).
$$
On the other hand,
$$ \tilde e_j(-1+(\nu(R)(s\sigma))(x_l))=
\tilde e_j\left(\sum_{m=0}^{n-1}(s\sigma)( z_{md+l})\otimes e_m\right)=
(s\sigma)( z_{jd+l}).
$$
Thus
$\sigma( z_{jd+l})=\sum_{i=0}^{n-1}\sum_{k=1}^db_{i,j}a_{k,l} z_{id+k}$.
\end{proof}

\begin{q_mat+} \label{q_mat+}
The $\cao_L$-linear map
$\Theta: \tilde\cao_K\otimes_{\cao_L}(\cax\otimes_{\Z}\cao_L)\to\caj/\caj^2$
defined by $\Theta( \tilde e_j\otimes x_l)=z_{jd+l}$,
is $\Gal(K/L)$-equivariant.
\end{q_mat+}





The right action of $\Gal(K/L)$ on $\Sp\cah$ induces a right
action on $\Phi$ which is in fact an action of the opposite group
$\Gal(K/L)^\circ$ on $\Phi$. Clearly, the linear coefficient of the
endomorphism $\sigma\in\Gal(K/L)^\circ$ of $\Phi$ is $\theta(\sigma)^T$.

\begin{repr}\label{repr}
If $K/L$ is tamely ramified, then there exists a universal fixed
pair $(F,f)$ for $(\Phi,\Gal(K/L)^\circ)$ such that $F$ represents
$\hat\cat$.
\end{repr}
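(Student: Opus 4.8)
The plan is to assemble the statement from the functorial pieces already established. First I would recall that $\Phi=\car_{\cao_K/\cao_L}((\F_m)^d_{\cao_K})$ by Proposition~\ref{wr_mgs}, that it is the formal group law attached to the coordinate system $z_1,\dots,z_{nd}$ on $\wh{\Sp\cah}$, and that $\Sp\cah$ is identified via $\nu$ with $\cau_0=\car_{\cao_K/\cao_L}((\G_m)^d_{\cao_K})$. Since formal completion of an affine group scheme along its augmentation ideal is functorial and commutes with the $\Gal(K/L)$-action, the formal group scheme $\wh{\cau_0}$ with its $\Gal(K/L)^\circ$-action is exactly $(\Phi,\Gal(K/L)^\circ)$. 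The key point I would stress is that completion commutes with both Weil restriction (this is essentially built into the construction of $\car_{B/A}(F)$ in Section~4, compatibly with $\rho_d$) and with taking the fixed subscheme; I would make the latter precise by noting that for a closed immersion $\cat\hookrightarrow\cau_0$ cut out by an ideal, passing to completions along the zero sections gives a closed immersion of formal schemes with the expected functor of points, so $\wh{(\cau_0^{\Gal(K/L)})_0}=(\wh{\cau_0})^{\Gal(K/L)^\circ}$ in the sense of universal fixed pairs.

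Next I would invoke the chain of isomorphisms of $\cao_L$-schemes already recorded: by Proposition~\ref{tam_iso}, $\omega$ induces $\cat\xrightarrow{\sim}\cau^{\Gal(K/L)}$, and by Proposition~\ref{cc_iso} the natural map $((\cau_0)^{\Gal(K/L)})_0\to(\cau^{\Gal(K/L)})_0$ is an isomorphism. Since $\hat\cat$ only sees a neighbourhood of the zero section, which lies in the connected component of the identity, we have $\hat\cat\cong\wh{((\cau_0)^{\Gal(K/L)})_0}$. So it remains to produce a universal fixed pair $(F,f)$ for $(\Phi,\Gal(K/L)^\circ)$ and identify $F$ with $\wh{((\cau_0)^{\Gal(K/L)})_0}$.

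For existence of the universal fixed pair I would apply Theorem~\ref{ufix_ex}, part~II. Its hypothesis is that the set $\cag^*=\{D:\Lambda^{-1}\circ(D+I_{nd})\Lambda\in\cag\}$ — which by the remark preceding the theorem consists of the matrices $\theta(\sigma)^T-I_{nd}=\chi(\sigma)^T\otimes\psi(\sigma)-I_{nd}$ for $\sigma\in\Gal(K/L)$, using Theorem~\ref{q_mat} — satisfies one of the equivalent conditions (i)--(iii). The natural one to verify is condition~(i): the $\cao_L$-rank of $\cap_{D\in\cag^*}\Kera D$ equals the $k$-dimension of $\cap_{D\in\cag^*}\Ker(D\otimes k)$. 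This is where tame ramification enters: the reduction mod $p$ of $\psi(\sigma)$ behaves well precisely when $K/L$ is tamely ramified, so that the common kernel over $\cao_L$ of the matrices $\chi(\sigma)^T\otimes\psi(\sigma)-I$ and its reduction have the same size. I would extract this rank-stability from the explicit shape of $\psi$ (tame inertia acts by a single semisimple matrix of order prime to $p$, and the Frobenius part is unramified), reducing the check to linear algebra over $\cao_L$ and over $k$ simultaneously. I expect this verification of condition~(i) — the compatibility of fixed spaces with reduction under the tameness hypothesis — to be the main obstacle; everything else is formal bookkeeping.

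Granting condition~(i), Theorem~\ref{ufix_ex}~II yields a universal fixed pair $(F,f)$ for $(\Phi,\Gal(K/L)^\circ)$. Finally I would argue that $F$ represents $\hat\cat$: by Proposition~\ref{ufix_suf} a fixed pair with bijective $f(N):F(N)\to\Phi(N)^{\Gal(K/L)^\circ}$ is universal, so universality is detected on functors of points; and on functors of points, $\wh{((\cau_0)^{\Gal(K/L)})_0}(N)$ is exactly the subset of fixed points of $\wh{\cau_0}(N)=\Phi(N)$ (using that $N$ is nilpotent, so every point factors through the connected component of the identity), hence $(\wh{((\cau_0)^{\Gal(K/L)})_0},\wh{\text{inclusion}})$ is itself a universal fixed pair. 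By uniqueness of universal fixed pairs up to canonical isomorphism (the remark after the definition of universal fixed pair), $F$ and $\wh{((\cau_0)^{\Gal(K/L)})_0}\cong\hat\cat$ are strongly isomorphic, which gives the statement.
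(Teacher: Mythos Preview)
Your proposal contains the paper's proof in its final paragraph, but buries it under an unnecessary and partially ill-posed detour through Theorem~\ref{ufix_ex}.

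The problem with invoking Theorem~\ref{ufix_ex}: that theorem is stated only for $L$ a finite unramified extension of $\Q_p$ (see the standing hypothesis introduced just before Lemma~\ref{ker_zero}), whereas Theorem~\ref{repr} lives in the generality of Section~6, where $L$ may be a global field. So the appeal to Theorem~\ref{ufix_ex} does not even type-check in the global case. Even restricting to local $L$, you correctly flag the verification of condition~(i) as ``the main obstacle'' but do not carry it out; and since that rank stability is essentially the smoothness of the fixed subscheme, proving it directly would amount to reproving a piece of Edixhoven's result that you are already invoking via Proposition~\ref{tam_iso}.

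The paper's proof is exactly what you sketch at the end, minus the detour: take $F$ to be a formal group law representing $\wh{(\Sp\cah)^{\Gal(K/L)}}$ and $f=\hat\iota$ the completed inclusion. The fixed subscheme is cut out by the ideal $\cai$ generated by the elements $a-\sigma a$, so for any nilpotent $\cao_L$-algebra $N$ the map $f(N)\colon F(N)\to\Phi(N)^{\Gal(K/L)}$ is a bijection by inspection. Then Proposition~\ref{ufix_suf} (not Theorem~\ref{ufix_ex}) gives universality of $(F,f)$, and the identification of $F$ with $\hat\cat$ follows from the chain $\hat\cat\cong\wh{((\cau_0)^{\Gal(K/L)})_0}\cong\wh{(\Sp\cah)^{\Gal(K/L)}}$ via Propositions~\ref{tam_iso} and~\ref{cc_iso}, exactly as you outline. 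No rank condition needs to be checked, and the argument is uniform in $L$. Simply delete the middle step.
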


\begin{proof}
According to Proposition~\ref{tam_iso}, the restriction
of $\omega$ to $\cat_0$ is an isomorphism from
$\cat_0$ to $(\cau^{\Gal(K/L)})_0$. Proposition~\ref{cc_iso}
implies that $\cat_0$ is isomorphic to $((\cau_0)^{\Gal(K/L)})_0$.
Since $\cau_0$ is identified with
$\car_{\cao_K/\cao_L}((\G_m)_{\cao_K}^d)$, and $\nu$
is an isomorphism, we obtain that $\cat_0$ is
isomorphic to $((\Sp\cah)^{\Gal(K/L)})_0$. Therefore
$\hat\cat$ is isomorphic to $\wh{(\Sp\cah)^{\Gal(K/L)}}$.
Moreover, $(\Sp\cah)^{\Gal(K/L)}$ is represented by the algebra
$\cah/\cai$, where $\cai$ is the ideal generated by elements
$a-\sigma a$, $a\in\cah$, $\sigma\in\Gal (K/L)$.
Consider a formal group law $F$ which represents
$\wh{(\Sp\cah)^{\Gal(K/L)}}$. The morphism
$\iota:(\Sp\cah)^{\Gal(K/L)}\to(\Sp\cah)$ induces a homomorphism
$f=\hat\iota: F\to \Phi$. Evidently, $(F,f)$ is a fixed pair for
$(\Phi,\Gal(K/L)^\circ)$. For any nilpotent $\cao_L$-algebra $N$,
$\wh{(\Sp\cah)^{\Gal(K/L)}}(N)=\Hom^*_A(\cah/\cai,A\oplus N)$ consists
of elements $s\in\Hom^*_A(\cah,A\oplus N)$ such that $sa-\sigma a=0$
for any $a\in\cah$, $\sigma\in\Gal (K/L)$, and hence, it coincides
with $\wh{\Sp\cah}(N)^{\Gal(K/L)}$. Therefore the map
$f(N):\wh{(\Sp\cah)^{\Gal(K/L)}}(N)\to \wh{\Sp\cah}(N)^{\Gal(K/L)}$
is bijective, and by Proposition~\ref{ufix_suf},
$(F,f)$ is a universal fixed pair for $(\Phi,\Gal(K/L)^\circ)$.
\end{proof}

\section{Tori split over a tamely ramified abelian extension of $\Q_p$}

We keep the notations of Section 6. Suppose that $L=\Q_p$,
and $K/\Q_p$ is a tamely ramified abelian extension. In this
case $K=K_1K_2$, where $K_1/\Q_p$ is an unramified extension of
degree $n_1$, and $K_2/\Q_p$ is a totally ramified extension of
degree $n_2$, $n=n_1n_2$. Then $K_1=\Q_p(\xi)$, where
$\xi$ is a primitive $(p^{n_1}-1)$-th root of unity; $K_2=\Q_p(\pi)$,
where $\pi^{n_2}=p\epsilon$, $\epsilon\in\Z_p^*$ and $n_2\mid p-1$.
The group $\Gal(K/K_2)$ is isomorphic to $\Gal(K_1/\Q_p)\cong\Z/n_1\Z$.
Take $\sigma_1\in\Gal(K/K_2)$ such that $\sigma_1|_{K_1}$
is the Frobenius automorphism. Then $\sigma_1$ generates $\Gal(K/K_2)$.
The inertia subgroup $\Gal(K/K_1)$ of $\Gal(K/\Q_p)$ is
isomorphic to $\Gal(K_2/\Q_p)\cong\Z/n_2\Z$.
Let $\zeta$ be a primitive $n_2$-th root of unity.
Take $\sigma_2\in\Gal(K/K_1)$ such that $\sigma_2(\pi)=\zeta\pi$.
Then $\sigma_2$ generates $\Gal(K/K_1)$.
Consider the following $\Z_p$-basis of $\cao_K$:
$e_{in_2+j}=\pi^j\xi^{p^i}$, $0\leq i\leq n_1-1$,
$0\leq j\leq n_2-1$.

If $\kappa\in K[[y_1,\ldots,y_m]]$ then there exist unique
$\kappa^{(j)}\in K_1[[y_1,\ldots,y_m]]$, $0\leq j\leq n_2-1$,
such that $\kappa=\sum_{j=0}^{n_2-1}\kappa^{(j)}\pi^j$.

\begin{sm_ram}\cite[Proposition 1.7, Proposition 1.8]{De}\label{sm_ram}
Let $\lambda=\sum_{i=1}^\infty c_iy^i\in K[[y]]$ be such that
$p^kc_{rp^k}\in \cao_K$ for any non-negative integers $r$, $k$.
If $\phi\in \cao_K[[y_1,\ldots,y_m]]$, then

\noindent{\rm (i)}
$(\lambda\circ\phi)^{(0)}\equiv\lambda^{(0)}\circ\phi^{(0)}\mod p$;

\noindent{\rm (ii)}
$(\lambda\circ\phi)^{(j)}\equiv\lambda^{(j)}\circ\phi^{(0)}\mod \cao_K$
for $0<j\leq n_2-1$.
\end{sm_ram}

\begin{tip3}\label{tip3}
$\Lambda$ is of type
$v=pI_{nd}-I_d\otimes J_{n_2}\otimes P_{n_1}\bt$.
\end{tip3}

\begin{proof}
For any $1\le l\le d$, consider
$$
\phi_l=\sum_{i=0}^{n_1-1}\sum_{j=0}^{n_2-1}
z_{(in_2+j)d+l}e_{in_2+j}\in\cao_K[[z_l,z_{d+l},\ldots,z_{(n-1)d+l}]].
$$
By Corollary from Proposition~\ref{wr_mgs}, we have
$$
\sum_{j=0}^{n_2-1}\sum_{i=0}^{n_1-1}\Lambda_{(in_2+j)d+l}e_{in_2+j}
=\LL \circ\phi_l
$$
which implies
$\sum_{i=0}^{n_1-1}\Lambda_{(in_2+j)d+l}\xi^{p^i}=
(\LL \circ\phi_l)^{(j)}$ for any $0\le j\le n_2-1$.

If $j\ne 0$, then by Lemma~\ref{sm_ram}
$$
\sum_{i=0}^{n_1-1}\Lambda_{(in_2+j)d+l}\xi^{p^i}\equiv
\LL ^{(j)}\circ\phi_l^{(0)}=0\mod \cao_K,
$$
whence  $p\Lambda_{(in_2+j)d+l}\equiv0\mod p$.

If $j=0$, then Lemma~\ref{sm_ram} implies
$\Lambda_{in_2d+l}\xi^{p^i}\equiv
\LL \circ\phi_l^{(0)}\mod p$.

We consider the action of $\bt$ on $K_1[[z_1,\ldots,z_{nd}]]$
introduced in Section 2. Then applying Lemma~\ref{ass} we get
$$
(p-\bt) \sum_{i=0}^{n_1-1}\Lambda_{in_2d+l}\xi^{p^i}\equiv
(p-\bt)(\LL \circ\phi_l^{(0)})\equiv
\left((p-\bt)\LL \right)\circ\phi_l^{(0)}\equiv 0\mod p.
$$
Therefore
$$
p\sum_{i=0}^{n_1-1}\Lambda_{in_2d+l}\xi^{p^i}\equiv
\sum_{i=0}^{n_1-1}(\bt\Lambda_{in_2d+l})\xi^{p^{i+1}}\mod p.
$$
Hence $p\Lambda_{in_2d+l}-\bt\Lambda_{(i-1)n_2d+l}\equiv0\mod p$ for
$1\le i\le n_1-1$ and
$p\Lambda_l-\bt\Lambda_{(n_1-1)n_2d+l}\equiv0\mod p$.

Thus $p\Lambda\equiv W\bt\Lambda\mod p$ and
$W=\{w_{\alpha,\beta}\}_{1\le\alpha,\beta\le nd}$ where
$w_{in_2+l,(i-1)n_2d+l}=1$ for $1\le i\le n_1-1, 1\le l\le d$ and
$w_{l,(n_1-1)n_2d+l}=1$ for $1\le l\le d$,
the other entries being equal to 0.
Therefore $W=I_d\otimes J_{n_2}\otimes P_{n_1}$ as required.
\end{proof}

For $0\leq i\leq n_1-2$, $0\leq j\leq n_2-1$, we have
$\sigma_1(e_{in_2+j})=e_{(i+1)n_2+j}$,
$\sigma_1(e_{(n_1-1)n_2+j})=e_j$, i.e., in the basis
$e_0,\ldots,e_{n-1}$ the automorphism $\sigma_1$ is given
by the matrix $I_{n_2}\otimes P_{n_1}$. Therefore,
$\psi(\sigma_1)=((I_{n_2}\otimes P_{n_1})^{-1})^T=I_{n_2}\otimes P_{n_1}$.
Denote $U_1=\chi(\sigma_1)^T$. Then by Theorem~\ref{q_mat},
$\theta(\sigma_1)^T=U_1\otimes I_{n_2}\otimes P_{n_1}^T$.

\begin{v_unr}\label{v_unr}
If
$$
\hat C=
\begin{pmatrix}
U_1^{-1}\otimes I_{n_2}\\
U_1^{-2}\otimes I_{n_2}\\
\vdots\\
U_1^{-n_1+1}\otimes I_{n_2}
\end{pmatrix},\quad
\quad Q_1=
\begin{pmatrix}
I_{n_2d} & 0\\
\hat C & I_{nd-n_2d}
\end{pmatrix},
$$
and $D=U_1\otimes I_{n_2}\otimes P_{n_1}^T$,
then $Q_1^{-1}DQ_1-I_{nd}=\begin{pmatrix}
0 & \hat D\\
0 & \tilde D
\end{pmatrix},
$
where
$\hat D\in\M_{n_2d,nd-n_2d}(\Z)$,
$\tilde D\in\M_{nd-n_2d}(\Z)$,
and $\hat C\hat D+\tilde D$ is invertible.
\end{v_unr}
\begin{proof}
First, notice that
$$
U_1\otimes I_{n_2}\otimes P_{n_1}^T=
\begin{pmatrix}
0      & U_1\otimes I_{n_2} & 0 & \cdots & 0 \\
0      & 0 & U_1\otimes I_{n_2} &        & 0\\
\vdots &   &   & \ddots & \vdots\\
0      & 0 & 0 &        & U_1\otimes I_{n_2}\\
U_1\otimes I_{n_2}      & 0 & 0 & \cdots & 0
\end{pmatrix}.
$$
Then one can directly compute that $Q_1^{-1}DQ_1-I_{nd}$
is of the required form with
$\hat D=
\begin{pmatrix}
U_1\otimes I_{n_2} & 0 & 0 & \cdots & 0
\end{pmatrix}
\in\M_{n_2d,nd-n_2d}(\Z)$, and
$$
\tilde  D=
\begin{pmatrix}
-2I_{n_2d}                  &U_1\otimes I_{n_2}& 0              &\cdots& 0& 0\\
-U_1^{-1}\otimes I_{n_2}    &-I_{n_2d}       &U_1\otimes I_{n_2}&    & 0& 0\\
-U_1^{-2}\otimes I_{n_2}    & 0              &-I_{n_2d} &        & 0    & 0\\
 \vdots                     &                &     & \ddots &      & \vdots \\
-U_1^{-n_1+3}\otimes I_{n_2}& 0         &0& & -I_{n_2d} & U_1\otimes I_{n_2}\\
-U_1^{-n_1+2}\otimes I_{n_2}& 0              & 0    & \cdots & 0    & -I_{n_2d}
\end{pmatrix} \in\M_{nd-n_2d}(\Z).
$$
The last assertion is now obvious.
\end{proof}

\begin{edlem}\cite[Lemma 3.3]{Ed}\label{edlem}
Let $\cag=\Z/n\Z$, $(n,p)=1$, and $\bar M$ be a finitely generated
$\F_p[\cag]$-module. Then there exists a unique up to isomorphism
$\Z_p[\cag]$-module $M$ which is free as a $\Z_p$-module and such that
$M\otimes_{\Z_p}\F_p\cong\bar M$.
\end{edlem}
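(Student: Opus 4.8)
The plan is to exploit that the coprimality $(n,p)=1$ makes the integral group ring as simple as possible. First I would note that $\Z_p[\cag]=\Z_p[t]/(t^n-1)$ is finite \'etale over $\Z_p$, since the derivative $nt^{n-1}$ is a unit in this quotient ($n$ is a unit and $t$ is invertible, as $t\cdot t^{n-1}=1$); equivalently, $t^n-1$ is separable modulo $p$. Hence $\Z_p[\cag]\cong\prod_i\cao_i$ is a finite product of rings of integers $\cao_i$ of finite \emph{unramified} extensions of $\Q_p$, each a complete discrete valuation ring, and reduction modulo $p$ yields the compatible decomposition $\F_p[\cag]\cong\prod_i\kappa_i$ into the residue fields $\kappa_i=\cao_i/p\cao_i$. (Equivalently, $\Z_p[\cag]$ is the maximal order of the semisimple commutative $\Q_p$-algebra $\Q_p[\cag]$.)

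Second, under this decomposition a $\Z_p[\cag]$-module is the same as a tuple $(M_i)_i$ with $M_i$ an $\cao_i$-module, and $M$ is finitely generated and free as a $\Z_p$-module precisely when each $M_i$ is finitely generated and $\cao_i$-torsion-free, i.e.\ (as $\cao_i$ is a discrete valuation ring finite over $\Z_p$) finitely generated free: $M_i\cong\cao_i^{r_i}$. In the same way a finitely generated $\F_p[\cag]$-module $\bar M$ is a tuple $(\bar M_i)_i$ of finite-dimensional $\kappa_i$-vector spaces $\bar M_i\cong\kappa_i^{s_i}$, and the integers $s_i$ --- the multiplicities of the simple $\F_p[\cag]$-modules in $\bar M$ --- are invariants of $\bar M$. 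Since $\cao_i^{r_i}\otimes_{\Z_p}\F_p\cong\kappa_i^{r_i}$, an $\F_p[\cag]$-isomorphism $M\otimes_{\Z_p}\F_p\cong\bar M$ holds if and only if $r_i=s_i$ for every $i$.

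Both claims then follow at once: for existence take $M=\prod_i\cao_i^{s_i}$, i.e.\ lift each isotypic component $\kappa_i^{s_i}$ of $\bar M$ to $\cao_i^{s_i}$; for uniqueness observe that any admissible $M$ is forced to have $r_i=s_i$, hence is isomorphic (as a $\Z_p[\cag]$-module) to this one.

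The only point I expect to require care is the structure statement for $\Z_p[\cag]$ together with checking that ``free as a $\Z_p$-module'' translates correctly into ``free over each $\cao_i$'': this uses that each $\cao_i$ is module-finite over $\Z_p$, so that $\Z_p$-torsion-freeness of $M$ is equivalent to $\cao_i$-torsion-freeness of each $M_i$, and that a finitely generated torsion-free module over a discrete valuation ring is free. Everything else is bookkeeping with the Wedderburn decomposition and its compatibility with reduction modulo $p$.
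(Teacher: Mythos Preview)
Your proof is correct. Note, however, that the paper does not actually prove this lemma: it is simply quoted from \cite[Lemma~3.3]{Ed} without argument, so there is no ``paper's own proof'' to compare against. Your approach via the \'etale decomposition $\Z_p[\cag]\cong\prod_i\cao_i$ into unramified DVRs is the standard and clean way to establish the result; the key points you identify (that $p$ is a uniformizer in each $\cao_i$, so $\Z_p$-torsion-freeness equals $\cao_i$-torsion-freeness, and that the idempotent decomposition is compatible with reduction mod $p$) are exactly what is needed, and you handle them correctly.
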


\begin{eigen} \label{eigen}
Let $\cag=\Z/{n_2}\Z$, $n_2\mid p-1$, $\sigma$ be a generator
of $\cag$ and $M$ be a finitely generated $\Z_p[\cag]$-module
which is free as a $\Z_p$-module. Then there exists a free
$\Z_p$-basis of $M$ consisting of eigenvectors of $\sigma$.
The multiplicity of the eigenvalue 1 is equal to $\rk_{\Z_p}M^\cag$.
\end{eigen}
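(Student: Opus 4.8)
The plan is to split $M$ into $\sigma$-eigenspaces by means of idempotents of the group ring $\Z_p[\cag]$, which is possible precisely because the hypothesis $n_2\mid p-1$ forces $n_2$ to be a unit in $\Z_p$ and forces $\Z_p$ to contain all $n_2$-th roots of unity.

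First I would produce a primitive $n_2$-th root of unity $\omega\in\Z_p^{*}$: the polynomial $x^{n_2}-1$ is separable modulo $p$ (its derivative $n_2x^{n_2-1}$ has only $0$ as a root, and $0$ is not a root of $x^{n_2}-1$), and since $\F_p^{*}$ is cyclic of order $p-1$ with $n_2\mid p-1$, it has $n_2$ distinct roots in $\F_p$; Hensel's lemma lifts each uniquely to $\Z_p$. Identifying $\Z_p[\cag]=\Z_p[x]/(x^{n_2}-1)$ via $\sigma\mapsto x$, we then have $x^{n_2}-1=\prod_{k=0}^{n_2-1}(x-\omega^{k})$ with pairwise coprime factors, so by the Chinese Remainder Theorem $\Z_p[\cag]\cong\prod_{k=0}^{n_2-1}\Z_p$. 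Concretely, the elements
$$
\epsilon_k=\frac1{n_2}\sum_{j=0}^{n_2-1}\omega^{-kj}\sigma^{j}\in\Z_p[\cag],\qquad 0\le k\le n_2-1,
$$
are well defined (this is where $1/n_2\in\Z_p$ enters), and a short computation with geometric sums shows they are orthogonal idempotents with $\sum_k\epsilon_k=1$ and $\sigma\epsilon_k=\omega^{k}\epsilon_k$.

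Next I would set $M_k=\epsilon_kM$, so that $M=\bigoplus_{k=0}^{n_2-1}M_k$ as $\Z_p$-modules and $\sigma$ acts on $M_k$ as multiplication by $\omega^{k}$. Each $M_k$, being a direct summand of the free $\Z_p$-module $M$, is finitely generated projective, hence free, over the principal ideal domain $\Z_p$. Choosing a free $\Z_p$-basis of each $M_k$ and taking the union gives a free $\Z_p$-basis of $M$ all of whose members are eigenvectors of $\sigma$, which proves the first assertion. For the second, note that $\epsilon_0=\frac1{n_2}\sum_{j=0}^{n_2-1}\sigma^{j}$ is the norm (averaging) projector, so $M_0=\epsilon_0M=M^{\cag}$; hence the multiplicity of the eigenvalue $1$ among the chosen basis vectors is $\rk_{\Z_p}M_0=\rk_{\Z_p}M^{\cag}$.

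I do not expect a serious obstacle: once the idempotents $\epsilon_k$ are in hand the eigenspace decomposition is purely formal, and the freeness of each summand $M_k$ is automatic over the PID $\Z_p$. The only genuine input is the numerical hypothesis $n_2\mid p-1$, which is exactly what makes $n_2$ invertible in $\Z_p$ and guarantees the existence of $\omega$; everything else is bookkeeping.
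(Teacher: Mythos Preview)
Your proof is correct and takes a genuinely different route from the paper's. You work directly over $\Z_p$: since $n_2\in\Z_p^{*}$ and $\Z_p$ contains all $n_2$-th roots of unity, you write down the orthogonal idempotents $\epsilon_k$ explicitly and split $M$ into eigenspaces at once, with freeness of each summand coming from the PID structure of $\Z_p$. The paper instead passes to the reduction $M\otimes_{\Z_p}\F_p$, invokes Maschke's theorem over $\F_p$ to see that every irreducible $\F_p[\cag]$-module is one-dimensional, builds a diagonal model $M'=\Z_p^d$ using Teichm\"uller lifts of the eigenvalues, and then appeals to Edixhoven's uniqueness-of-lift lemma (Lemma~\ref{edlem}) to conclude $M\cong M'$. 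Your argument is more self-contained, as it bypasses the lifting lemma entirely, and it also disposes of the second assertion cleanly via the identification $M_0=\epsilon_0M=M^{\cag}$, which the paper's proof leaves implicit. The paper's approach, on the other hand, makes the connection to the mod-$p$ picture explicit, which is in keeping with how the surrounding section repeatedly passes between characteristic $0$ and characteristic $p$.
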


\begin{proof}
Since $(n_2,p)=1$, the algebra $\F_p[\cag]$ is semisimple by Mashke's
Theorem, and the $\F_p[\cag]$-module $M\otimes_{\Z_p}\F_p$ is
isomorphic to the direct sum of irreducible $\F_p[\cag]$-modules.
Let $\bar M_0$ be an irreducible $\F_p[\cag]$-module, and
$x\in\bar M_0$, $x\neq0$. Then
$0=(\sigma^{n_2}-1)x=\prod_{i=1}^{n_2}(\sigma-\zeta_i)x$,
where $\zeta_1,\ldots,\zeta_{n_2}$ are the $n_2$-th roots of unity.
Therefore, there exist $y\in M$, $y\ne 0$, and an index
$1\leq i\leq n_2$ such that $(\sigma-\zeta_i)y=0$. Obviously $\F_py$
is an $\F_p[\cag]$-submodule of $\bar M_0$. Since $\bar M_0$ is
irreducible and $y\ne 0$, we get $\F_py=M$, i.e., every irreducible
$\F_p[\cag]$-module is one-dimensional.
Thus there exists a free $\F_p$-basis $m_1,\ldots,m_d$
of $M\otimes_{\Z_p}\F_p$ consisting of eigenvectors of $\sigma$.
Let $\bar\alpha_i\in\F_p$ be the eigenvalue of the vector $m_i$,
$1\leq i\leq d$. Define a $\Z_p[\cag]$-module structure on $M'=\Z_p^d$
by $\sigma(a_1,\ldots,a_d)=(\alpha_1a_1,\ldots,\alpha_da_d)$,
where $\alpha_i\in\Z_p$ is the multiplicative representative of
$\bar\alpha_i$, $1\leq i\leq d$. Then
$M'\otimes_{\Z_p}\F_p\cong M\otimes_{\Z_p}\F_p$. Therefore,
Lemma~\ref{edlem} implies that $M\cong M'$.
\end{proof}

For $0\leq i\leq n_1-1$, $0\leq j\leq n_2-1$, we have
$\sigma_2(e_{in_2+j})=\zeta^je_{in_2+j}$ and
$\sigma_2(\tilde e_{in_2+j})=\zeta^{-j}\tilde e_{in_2+j}$, i.e.,
$\psi(\sigma_2)=V \otimes I_{n_1}$, where $V $ is
the diagonal matrix with $j$-th diagonal entry
equal to $\zeta^{-j+1}$.

Denote $d_0=\rk_{\Z_p}(\cax\otimes_{\Z}\Z_p)^{<\sigma_2>}$. Then
by Proposition~\ref{eigen}, there exists a free $\Z_p$-basis
$y_1,\ldots,y_d$ of $\cax\otimes_{\Z}\Z_p$ consisting of
eigenvectors of $\sigma_2$ such that the eigenvalues of
$y_1,\ldots,y_{d_0}$ are equal to $1$, and the eigenvalues of
$y_{d_0+1},\ldots,y_d$ are not equal to $1$. Let $W\in\GL_d(\Z_p)$
be the transition matrix from $y_1,\ldots,y_d$ to
$x_1,\ldots,x_d$. Then $W^{-1}\chi(\sigma_2)W$ is a diagonal
matrix with $l$-th diagonal entry equal to $\zeta^{i_l}$, where
$i_l=0$ for $1\leq l\leq d_0$, and $1\leq i_l\leq n_2-1$ for
$d_0+1\leq l\leq d$.

Denote $U_2=\chi(\sigma_2)^T$. Then by Theorem~\ref{q_mat},
$\theta(\sigma_2)^T=U_2\otimes V \otimes I_{n_1}$.

Define a permutation $\tau$ of the set $\{1,\ldots,n_2d\}$
in the following way: for  $0\le j\le n_2-1, 1\le l\le d$
put $\tau(jd+l)=j_1d+l$, where $j_1$ is the reminder of
the division of $j+i_l$ by $n_2$.
Then $\tau(i)=i$ for $1\leq i\leq d_0$ and $\tau(i)>d$ for
$d_0+1\le i\le d$.

\begin{v_ram}\label{v_ram}
If $P_\tau\in\M_{n_2d}(\Z)$ is the matrix of $\tau$, i.e.,
$P_\tau=\{\delta_\alpha^{\tau(\beta)}\}_{1\le\alpha,\beta\le n_2d}$,
$Q_2=((W^T)\otimes I_{n_2})^{-1}P_\tau$ and $D=U_2\otimes V$, then
$Q_2^{-1}DQ_2-I_{n_2d}=\begin{pmatrix}
0 & 0\\
0 & \tilde D
\end{pmatrix}$, where
$\tilde D\in\M_{n_2d-d}(\Z_p)$ is invertible.
\end{v_ram}

\begin{proof}
As we showed above,
$(W^T\otimes I_{n_2})D(W^T\otimes I_{n_2})^{-1}
=(W^{-1}\chi(\sigma_2)W)^T\otimes V$ is a diagonal
matrix with the $(jd+l)$-th diagonal entry equal to
$\zeta^{i_l-j}$ for $0\le j\le n_2-1, 1\le l\le d$.
According to the definition of $\tau$, we obtain that
$P_\tau^{-1}(W^T\otimes I_{n_2})
D(W^T\otimes I_{n_2})^{-1} P_\tau$
is a diagonal matrix with first $d$ diagonal entries equal
to $1$ and the rest of the diagonal entries equal to $\zeta^i$
for $1\le i\le n_2-1$. Hence, $Q_2^{-1}DQ_2-I_{n_2d}$ is of
the required form. Finally, since $\zeta^i-1$ is invertible
for $1\le i\le n_2-1$, we get that
$\tilde D\in\M_{n_2d-d}(\Z_p)$ is invertible.
\end{proof}

The character group of the maximal subtorus $T_s$
(resp. $T_a$) of $T$ which is split (resp. anisotropic)
over $K_1$ is canonically isomorphic to $\cax/\Ker\rho_s$
(resp. $\cax/\Ker\rho_a$),  where $\rho_s\colon\cax\to\cax$
and $\rho_a\colon\cax\to\cax$ are defined by
$$
\rho_s(x)=\sum_{\sigma\in\Gal(K/K_1)}\sigma x=
x+\sigma_2x+\cdots+\sigma_2^{n_2-1}x,\quad
\rho_a(x)=\sigma_2(x)-x.$$
Denote $d_s=\dim T_s$, $d_a=\dim T_a$. Then $d_s+d_a=d$
(see \cite[Section 7.4]{Wa}). Let $\tilde x_1,\ldots,\tilde x_{d_s}$
and $\hat x_1,\ldots,\hat x_{d_a}$
be free $\Z$-bases of $\cax/\Ker\rho_s$ and
$\cax/\Ker\rho_a$, respectively, and let
$\tilde\chi:\Gal(K_1/\Q_p)\to\GL_{d_s}(\Z)$
be the representation corresponding
to $\tilde x_1,\ldots,\tilde x_{d_s}$.
Denote $\tilde U_1=\tilde\chi(\sigma_1|_{K_1})^T$.

Let $T'_s$, $T'_a$, $\rho'_s$, $\rho'_a$, $d'_s $, $d'_a$
be defined for the torus $T'$ similar to
$T_s$, $T_a$, $\rho_s$, $\rho_a$, $d_s$, $d_a$.
Let $\tilde x'_1,\ldots,\tilde x'_{d'_s }$
and $\hat x'_1,\ldots,\hat x'_{d'_a}$
be free $\Z$-bases of $\cax'/\Ker\rho'_s$
and $\cax'/\Ker\rho'_a$, respectively.
The morphism $\eta\colon T\to T'$ induces
the morphisms $\eta_s\colon T_s\to T'_s$ and
$\eta_a\colon T_a\to T'_a$ which correspond to
homomorphisms $\cax'/\Ker\rho'_s\to\cax/\Ker\rho_s$
and $\cax'/\Ker\rho'_a\to\cax/\Ker\rho_a$.
Denote the matrices of these homomorphisms in
the chosen bases by
$C_s\in\M_{d_s,d'_s }(\Z)$ and
$C_a\in\M_{d_a,d'_a }(\Z)$, respectively.

\begin{tpt3}\label{tpt3}
Let $T$, $T'$ be tori over $\Q_p$ which are split
over an abelian tamely ramified extension $K$ of $\Q_p$,
$\cat$, $\cat'$ be their N\'eron models, and
$\eta\colon T\to T'$ be a morphism. Then

{\rm I.} A formal group law with logarithm of type
$$
pI_d-
\begin{pmatrix}
\tilde U_1 & 0\\
0 & 0\end{pmatrix}\bt
$$
represents $\hat\cat$.

{\rm II.}
The linear coefficient of the homomorphism of formal group
laws corresponding to $\hat\cat$ and $\hat\cat'$
in the sense of part {\rm I} induced by $\eta$ is equal to
$$
\begin{pmatrix}
C_s^T & 0\\
0 & C_a^T
\end{pmatrix}.
$$
\end{tpt3}

\begin{proof}
I. According to Theorem~\ref{repr}, a formal group law which appears
in a universal fixed pair for $(\Phi,\Gal(K/\Q_p))$ represents
$\hat\cat$. By Theorem~\ref{ufix_ex} we can take a universal
fixed pair $(F_1, f_1)$ for $(\Phi, \langle\sigma_1\rangle)$
such that the linear coefficient of $f_1$ is $Q_1I_{nd,n_2d}$,
and the logarithm of $F_1$ is of type $u_1$, where $u_1$ is
the upper-left $n_2d\times n_2d$-submatrix of $Q_1^{-1}vQ_1$
for $v$ and $Q_1$ being from Proposition~\ref{tip3} and
Lemma~\ref{v_unr}, respectively, i.e.,
$$
v=pI_{nd}-
\begin{pmatrix}
0                          & 0              &\cdots& 0& I_d\otimes J_{n_2}\\
I_d\otimes J_{n_2}         & 0               &     & 0&     0\\
0                          &I_d\otimes J_{n_2}&    &  0  & 0  \\
 \vdots                     &                &\ddots     & &\vdots \\
0& 0              & \cdots       & I_d\otimes J_{n_2}&0
\end{pmatrix}\bt,
$$
$$
Q_1=
\begin{pmatrix}
I_{n_2d}                    & 0              &\cdots& 0& 0\\
U_1^{-1}\otimes I_{n_2}     & I_{n_2d}       &      & 0& 0\\
U_1^{-2}\otimes I_{n_2}     & 0              &      & 0&  0 \\
 \vdots                     &                &\ddots&  & \vdots \\
U_1^{-n_1+1}\otimes I_{n_2}& 0              &\cdots& 0& I_{n_2d}
\end{pmatrix}.
$$
An easy calculation shows that $u_1=pI_{n_2d}-U_1\otimes
J_{n_2}\bt$. The linear coefficient of $f_1$ is equal to
$Q_1I_{nd,n_2d}$. The action of
$\langle\sigma_2\rangle$ on $\Phi$ induces the action on
$F_1$ by condition $f_1\circ\sigma'_2=\sigma_2\circ f_1$ for
$\sigma'_2\in\Aut_{\Z_p}(F_1)$. If $Z\in\M_{n_2d}(\Z_p)$ is its
linear coefficient, we have $I_{nd,n_2d}Z=Q_1^{-1}(U_2\otimes V
\otimes I_{n_1})Q_1I_{nd,n_2d}$, which implies that $Z$ is the
upper-left $n_2d\times n_2d$-submatrix of $Q_1^{-1}(U_2\otimes V
\otimes I_{n_1})Q_1$. An easy calculation gives $Z=U_2\otimes V $.

By Theorem~\ref{ufix_ex} we can take a universal fixed pair
$(F_2, f_2)$ for $(F_1,\langle\sigma'_2\rangle)$ such that
the linear coefficient of $f_2$ is $Q_2I_{n_2d,d}$, and
the logarithm of $F_2$ is of type $u_2$, where $u_2$ is
the upper-left $d\times d$-submatrix of
$$
Q_2^{-1}u_1Q_2=pI_{n_2d}-Q_2^{-1}(U_1\otimes J_{n_2})Q_2\bt
=pI_{n_2d}-P_\tau^{-1}(W^{-1}\chi(\sigma_1)W\otimes J_{n_2})^TP_\tau\bt
$$
for $Q_2$ being from Lemma~\ref{v_ram}. Properties of the
permutation $\tau$ imply that the upper-left $d\times d$-submatrix
of $P_\tau^{-1}(W^{-1}\chi(\sigma_1)W\otimes J_{n_2})^TP_\tau$
is equal to
$\begin{pmatrix}
\hat U_1 & 0\\
0 & 0
\end{pmatrix}
\in\M_d(\Z_p)$,
where $\hat U_1$ is the upper-left $d_0\times d_0$-submatrix
of $(W^{-1}\chi(\sigma_1)W)^T$.

If we extend $\rho_s$ and $\rho_a$ on $\cax\otimes_{\Z}\Z_p$,
then we get $\rho_s(y_l)=n_2y_l$ and $\rho_a(y_l)=0$
for $1\leq l\leq d_0$, $\rho_s(y_l)=0$ and
$\rho_a(y_l)=(\zeta^{i_l}-1)y_l$ for $d_0+1\leq l\leq d$,
and consequently,
$\Ker\rho_s=\langle y_{d_0+1},\ldots,y_d\rangle$,
$\Ker\rho_a=\langle y_1,\ldots,y_{d_0}\rangle$.
Moreover
$y_1+\Ker\rho_s,\ldots,y_{d_0}+\Ker\rho_s$ and
$y_{d_0+1}+\Ker\rho_a,\ldots,y_d+\Ker\rho_a$
are free $\Z_p$-bases of
$(\cax\otimes_{\Z}\Z_p)/\Ker\rho_s$ and
$(\cax\otimes_{\Z}\Z_p)/\Ker\rho_a$, respectively. Clearly,
$\tilde x_1,\ldots,\tilde x_{d_s}$ is a free $\Z_p$-basis of
$(\cax/\Ker\rho_s)\otimes_{\Z}\Z_p\cong(\cax\otimes_{\Z}\Z_p)/\Ker\rho_s$,
and $\hat x_1,\ldots,\hat x_{d_a}$ is a free $\Z_p$-basis of
$(\cax/\Ker\rho_a)\otimes_{\Z}\Z_p\cong(\cax\otimes_{\Z}\Z_p)/\Ker\rho_a$.
It yields in particular $d_0=d_s$.
Denote by $W_s\in\GL_{d_s}(\Z_p)$ the transition
matrix from $y_1+\Ker\rho_s,\ldots,y_{d_s}+\Ker\rho_s$
to $\tilde x_1,\ldots,\tilde x_{d_s}$ and by
$W_a\in\GL_{d_a}(\Z_p)$ the transition matrix
from $y_{d_s+1}+\Ker\rho_a,\ldots,y_d+\Ker\rho_a$
to $\hat x_1,\ldots,\hat x_{d_a}$.

The matrices $W^{-1}\chi(\sigma_1)W$ and $W^{-1}\chi(\sigma_2)W$
commute, therefore the former is a block diagonal matrix with
two blocks of size $d_s\times d_s$ and
$d_a\times d_a$.
One can easily see that its upper-left block coincides with
$W_s^{-1}\tilde\chi(\sigma_1)W_s$. It implies
$\hat U_1=W_s^T\tilde U_1(W_s^T)^{-1}$.
Thus
$$
u_2=pI_{n_2d}-\begin{pmatrix}
W_s^T\tilde U_1(W_s^T)^{-1} & 0\\
0 & 0
\end{pmatrix}\bt.
$$
Now $(F_2, f_1\circ f_2)$ is a universal
fixed pair for $(\Phi,\Gal(K/\Q_p) )$ by Proposition~\ref{ufix_prod}.

According to Proposition~\ref{hon} (i),(iii), there exists
a formal group law $F_3$ whose logarithm is of type
$$
\begin{pmatrix}
W_s^T & 0\\
0 & W_a^T
\end{pmatrix}^{-1}u_2
\begin{pmatrix}
W_s^T & 0\\
0 & W_a^T
\end{pmatrix}
=pI_d-
\begin{pmatrix}
\tilde U_1 & 0\\
0 & 0
\end{pmatrix}\bt,
$$
and
$$
f_3(X)=
\begin{pmatrix}
W_s^T & 0\\
0 & W_a^T
\end{pmatrix}X
$$
is an isomorphism from $F_3$ to $F_2$.
Thus, $(F_3, f_1\circ f_2\circ f_3)$ is a universal fixed
pair for $(\Phi, \Gal(K/\Q_p) )$ with $F_3$ as required.

II. The homomorphism $\Phi\to\Phi'$ induced by $\eta$ commutes
with the actions of $\Gal(K/L)$ on $\Phi$ and $\Phi'$.
According to Proposition~\ref{linco}, its linear coefficient
is equal to $C^T\otimes I_n$. Let $(F_1,f_1)$
and $(F'_1,f'_1)$ be as above.
Then by Proposition~\ref{mor_ufix}, we get a homomorphism
$F_1\to F'_1$ whose linear coefficient $Z_1$ satisfies
$Q'_1I_{nd',n_2d'}Z_1=(C^T\otimes I_n)Q_1I_{nd,n_2d}$. It implies
that $Z_1$ is the upper-left $n_2d'\times n_2d$-submatrix of
${Q'_1}^{-1}(C^T\otimes I_n)Q_1$. An easy calculation shows
that $Z_1=C^T\otimes I_{n_2}$. The homomorphism $F_1\to F'_1$
commutes with the actions of $\langle\sigma_2\rangle$ on $F_1$
and $F'_1$. Then Proposition~\ref{mor_ufix} yields a homomorphism
$F_2\to F'_2$ whose linear coefficient $Z_2$ satisfies
$Q'_2I_{n_2d',d'}Z_2=(C^T\otimes I_{n_2})Q_2I_{n_2d,d}$.
Hence, $Z_2$ is the upper-left $d'\times d$-submatrix of
${Q'_2}^{-1}(C^T\otimes I_{n_2})Q_2=
P_{\tau'}^{-1}((W^{-1}CW')^T\otimes I_{n_2})P_\tau$.

Let $W^{-1}CW'=\{\hat c_{l,l'}\}_{1\le l\le d;1\le l'\le d'}$,
$Z_2=\{\tilde c_{l',l}\}_{1\le l'\le d';1\le l\le d}$. Then
direct computation shows that
$\tilde c_{l',l}=\hat c_{l,l'}\delta_{i_l}^{i'_{l'}}$.
Since $(W^{-1}CW')(W'^{-1}\chi'(\sigma_2)W')=
(W^{-1}\chi(\sigma_2)W)(W^{-1}CW')$, we obtain
that if $i_l\neq i'_{l'}$, then $\hat c_{l,l'}=0$.
It yields $Z_2=(W^{-1}CW')^T$.
Since $i_l=i'_{l'}=0$ for $1\le l\le d_s$,
$1\le l'\le d'_s $ and $i_l\neq0$, $i'_{l'}\neq0$, for
$d_s+1\le l\le d$, $d'_s +1\le l'\le d'$, we obtain
that $Z_2$ is a block-diagonal matrix with two blocks of size
$d'_s \times d_s$ and $d'_a\times d_a$.
One can easily see that the upper-left block is
$(W_s^{-1}C_sW'_s)^T$ and the lower-right block is
$(W_a^{-1}C_aW'_a)^T$.
Finally, the linear coefficient of the composition
$F_3\stackrel{f_3}{\longrightarrow}F_2\longrightarrow F'_2
\stackrel{(f'_3)^{-1}}{\longrightarrow}F'_3$, is equal to
$$
\begin{pmatrix}
(W'_s)^T & 0\\
0 & (W'_a)^T
\end{pmatrix}^{-1}Z_2
\begin{pmatrix}
W_s^T & 0\\
0 & W_a^T
\end{pmatrix}=
\begin{pmatrix}
C_s^T & 0\\
0 & C_a^T
\end{pmatrix}
$$
as required.
\end{proof}

\begin{tpt3+}\label{tpt3+}
Suppose that the assumptions of Theorem~\ref{tpt3} are satisfied.

{\rm I.} $\hat\cat$ is isomorphic to the direct sum
of a $p$-divisible group and $d_a$ copies of the additive
formal group scheme.

{\rm II.} If $K/\Q_p$ is unramified, then $\hat\cat$
is represented by a formal group law whose logarithm is of type
$pI_d-U_1\bt$.

{\rm III.} If $K/\Q_p$ is totally tamely ramified, then
$\hat\cat$ is isomorphic to the direct sum of $d_s$ copies
of the multiplicative formal group scheme and $d_a$ copies
of the additive formal group scheme.
\end{tpt3+}

Part II of the above Corollary follows from Theorem 1.5 of
\cite{DN}. Besides, Theorems 0.1 and 1.3 of \cite{NX} imply that
part III holds for the reduction of the formal group scheme
$\hat\cat$.

\begin{isrefg}\label{isrefg}
Let $F$, $F'$ be $d$-dimensional formal group laws over $\Z_p$
with logarithms of types $u$, $u'$,
respectively, such that their reductions are isomorphic.
If
$$
u=pI_d-\begin{pmatrix}
S & 0\\
0 & 0
\end{pmatrix}\bt, \quad
u'=pI_d-\begin{pmatrix}
S' & 0\\
0 & 0
\end{pmatrix}\bt,
$$
with matrices $S\in GL_{d_1}(\Z_p)$, $S'\in GL_{d'_1 }(\Z_p)$,
then $F$ and $F'$ are isomorphic.
\end{isrefg}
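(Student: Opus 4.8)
The plan is to reduce everything to Honda's isomorphism criterion, Proposition~\ref{hon}~(iii), by exhibiting suitable intertwining matrices over $\cae_p$. Since the reductions of $F$ and $F'$ are isomorphic, there is a matrix $D_0\in\M_d(\Z_p)$ whose reduction $\overline{D_0}$ gives an isomorphism of the reduced formal group laws, and hence $\overline{D_0}$ is invertible; lifting, $D_0\in\GL_d(\Z_p)$. The first task is to upgrade this modulo-$p$ isomorphism to an honest isomorphism over $\Z_p$. By Proposition~\ref{hon}~(iii), $\lambda'^{-1}\circ D_0\lambda\in\Hom_{\Z_p}(F,F')$ precisely when there exists $w\in\M_d(\cae_p)$ with $u'D_0=wu$, i.e.
$$
\left(pI_d-\begin{pmatrix}S' & 0\\ 0 & 0\end{pmatrix}\bt_p\right)D_0
=w\left(pI_d-\begin{pmatrix}S & 0\\ 0 & 0\end{pmatrix}\bt_p\right).
$$
Writing $w=w_0+w_1\bt_p+\cdots$ and comparing the constant term forces $w_0=D_0$, and comparing the $\bt_p$-term gives an equation expressing $w_1$ in terms of $D_0$, $S$, $S'$, $\sigma$; continuing, each $w_k$ for $k\ge 1$ is determined recursively, and one must check the series lies in $\cae_p$ (i.e. the coefficients are in $\Z_p$). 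This is where the block-diagonal shape with $S,S'$ invertible is used: because the lower-right blocks of $u,u'$ are exactly $pI$, the recursion never requires dividing by a non-unit, so the $w_k$ stay integral. Thus $\lambda'^{-1}\circ D_0\lambda$ is a homomorphism $F\to F'$.

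Next I would produce the inverse direction symmetrically: since $\overline{D_0}$ is invertible with inverse $\overline{D_0}^{-1}=\overline{D_0^{-1}}$ inducing the inverse isomorphism of reductions, the same argument with the roles of $(u,S)$ and $(u',S')$ swapped yields $w'\in\M_d(\cae_p)$ with $uD_0^{-1}=w'u'$, so that $\lambda^{-1}\circ D_0^{-1}\lambda'\in\Hom_{\Z_p}(F',F)$. The composite $\lambda^{-1}\circ D_0^{-1}\lambda'\circ\lambda'^{-1}\circ D_0\lambda=\lambda^{-1}\circ (D_0^{-1}D_0)\lambda=\id$, and likewise the other composite is the identity, so $\lambda'^{-1}\circ D_0\lambda$ is an isomorphism $F\to F'$. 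In particular this also forces $d_1=d_1'$ (the ranks of $S$, $S'$ must agree, as they are determined by the reductions), though we do not need to isolate that fact.

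The main obstacle is the integrality of the recursively defined $w=\sum w_k\bt_p^k$: one has to verify that solving $u'D_0=wu$ coefficient by coefficient never introduces denominators. Concretely, write $S_0=\begin{pmatrix}S & 0\\ 0 & 0\end{pmatrix}$ and $S_0'=\begin{pmatrix}S' & 0\\ 0 & 0\end{pmatrix}$; expanding and using $\bt_p a=\sigma(a)\bt_p$ one gets, for $k\ge 1$, a relation of the form $p\,w_k = w_{k-1}\!\cdot(\text{something})\, - S_0'\cdot(\text{something})$ where the right-hand side is visibly divisible by $p$ once one knows $w_{k-1}$ has the appropriate structure — the point being that the ``bad'' blocks of $S_0,S_0'$ are zero, so the only obstruction to integrality would come from the invertible blocks $S,S'$, and there division is harmless. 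An alternative, perhaps cleaner route avoiding an explicit recursion: use Proposition~\ref{hon}~(ii) to pick types for $F$ and $F'$ compatibly, or invoke the general principle — implicit in the proof of Proposition~\ref{hon}~(iii) in \cite{Ho} — that a mod-$p$ homomorphism between formal group laws of Honda type lifts whenever the relevant type matrix is ``$p\,I$ on the kernel part.'' I would present the recursion since it is self-contained and short, flagging the integrality check as the one place requiring care.
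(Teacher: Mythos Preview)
Your integrality claim does not go through. With $S_0=\begin{pmatrix}S&0\\0&0\end{pmatrix}$ and $S_0'=\begin{pmatrix}S'&0\\0&0\end{pmatrix}$, the recursion forced by $u'D_0=wu$ over $\cae_p=\Z_p[[\bt_p]]$ (commutative, since $\sigma=\id$ on $\Z_p$) is
\[
w_0=D_0,\qquad pw_1=D_0S_0-S_0'D_0,\qquad pw_k=w_{k-1}S_0\ \ (k\ge 2),
\]
so $w_k=p^{-(k-1)}w_1S_0^{k-1}$. Since $S_0^{k-1}$ has invertible upper-left block $S^{k-1}$, integrality of every $w_k$ forces the first $d_1$ columns of $w_1$ to vanish, which unwinds to the \emph{exact} identities $AS=S'A$ and $C=0$ in $\Z_p$ (writing $D_0=\begin{pmatrix}A&B\\C&E\end{pmatrix}$ in blocks). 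Nothing in your argument supplies these: you use the isomorphism of reductions only to produce the linear coefficient $\overline{D_0}$, discarding all higher-order information in $\bar\phi$, and then appeal solely to the block shape of $u,u'$. A toy case makes the failure explicit: take $d=d_1=1$, $S=1$, $S'=1+p$, and any unit $D_0$; then $w_1=-D_0$ and $w_2=-D_0/p\notin\Z_p$. Here the reductions happen not to be isomorphic, so the lemma is not contradicted --- but your argument cannot detect that, since after extracting $D_0$ it never again uses the hypothesis. The ``general principle'' you invoke as an alternative is not an available result.

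The paper's proof is organised quite differently and relies on an ingredient you are missing. It first applies Honda's Theorem~6 to obtain invertible $w,z\in\GL_d(\cae)$ with $u'z=wu$ --- crucially $z$ is a unit of $\cae$, not a scalar in $\Z_p$ --- deduces $d_1=d_1'$, passes to the upper-left $d_1\times d_1$ blocks, and thereby splits off the additive summand $(\F_a)^{d-d_1}$ on each side. This reduces the problem to comparing two $d_1$-dimensional formal group laws $\tilde F,\tilde F'$ with the \emph{same} reduction. The decisive step is then a rigidity theorem from deformation theory: since $S\in\GL_{d_1}(\Z_p)$, the group $\tilde F$ is $p$-divisible of height $d_1$, and Theorem~2 of \cite{DG} gives that every deformation over $\Z_p$ of its reduction is isomorphic to $\tilde F$. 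This deformation-theoretic input is what replaces the bare-hands lifting you attempt.
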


\begin{proof}
By Theorem $6$ of \cite{Ho}, there exist $w,z\in\GL_d(\cae)$ such
that $u'z=wu$. In particular, it implies $d'_1 =d_1 $. Denote by
$\tilde u$, $\tilde u'$, $\tilde z$, $\tilde w$ the  upper-left
$d_1\times d_1$-submatrix of $u$, $u'$, $z$, $w$, respectively.
Then we get $\tilde u'\tilde z=\tilde w\tilde u$. Let $\lambda$, $\lambda'$
be of types $\tilde u$, $\tilde u'$, respectively.
By Proposition~\ref{hon} (i), $\lambda$ and $\lambda'$ are the logarithms
of formal group laws $\tilde F$ and $\tilde F'$. Obviously,
$F$ is isomorphic to the direct sum of $\tilde F$ and $d-d_1$ copies
of the additive formal group law, and $F'$ is isomorphic to the direct sum
of $\tilde F'$ and $d-d_1$ copies of the additive formal group law.
Thus it remains to show that $\tilde F$ is isomorphic to $\tilde F'$. Let
$$
z=
\begin{pmatrix}
\tilde z & \hat z\\
* & *
\end{pmatrix},
$$
with $\hat z\in\M_{d_1,d-d_1}(\cae)$. Since
$$
u'z=wu\equiv
\begin{pmatrix}
* & 0\\
* & 0
\end{pmatrix}\mod p,
$$
we get $\tilde u'\hat z\equiv 0\mod p$. Therefore $\hat z\equiv 0\mod p$,
and since $z$ is invertible, $\tilde z $ is also invertible.
Take $D\in\GL_{d_1 }(\Z_p)$ such that $\tilde z=D\mod\bt$.
Then $\lambda''=D^{-1}\tilde z\lambda$ is of type $\tilde u\tilde z^{-1}D$,
and hence, by Proposition~\ref{hon} (i),(iii),
the formal power series $\lambda''$ is
the logarithm of a formal group law $\tilde F''$ over $\Z_p$
which is isomorphic to $\tilde F'$.  Since
$(\lambda^{-1}\circ \tilde z^{-1}D\lambda'')(X)=X$,
Theorem $5$ (ii) of \cite{Ho} implies that the reduction
of $\tilde F$ coincides with that of $\tilde F''$.
Since $\tilde u=pI_{d_1 }-S\bt$
and $S$ is invertible, $\tilde F$ is a $p$-divisible group of
height $d_1 $. Then by Theorem 2 of \cite{DG}, any
deformation over $\Z_p$ of the reduction of $\tilde F$
is isomorphic to $\tilde F$. In particular, $\tilde F''$
is isomorphic to $\tilde F$.
\end{proof}

\begin{isreto}\label{isreto}
Let $T$, $T'$ be tori over $\Q_p$ which are split
over an abelian tamely ramified extension of $\Q_p$, and
$\cat$, $\cat'$ be their N\'eron models. If the reductions
of $\cat$ and $\cat'$ are isomorphic, then $\hat\cat$
and $\hat\cat'$ are isomorphic.
\end{isreto}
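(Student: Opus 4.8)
The plan is to deduce the claim from Lemma~\ref{isrefg}, fed with the explicit types of logarithms supplied by Theorem~\ref{tpt3}.

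First I would apply Theorem~\ref{tpt3}, part~I, to $T$ relative to an abelian tamely ramified extension of $\Q_p$ splitting $T$, and, separately, to $T'$ relative to such an extension splitting $T'$. This produces formal group laws $F$ and $F'$ over $\Z_p$, of dimensions $d=\dim T$ and $d'=\dim T'$, representing $\hat\cat$ and $\hat\cat'$, with logarithms of types
$$
pI_d-\begin{pmatrix}S & 0\\ 0 & 0\end{pmatrix}\bt
\qquad\text{and}\qquad
pI_{d'}-\begin{pmatrix}S' & 0\\ 0 & 0\end{pmatrix}\bt,
$$
where $S=\tilde U_1$ and $S'=\tilde U'_1$ are, by construction, transposes of matrices of finite-order automorphisms of free $\Z$-modules; hence $S\in\GL_{d_s}(\Z)\subset\GL_{d_s}(\Z_p)$ and $S'\in\GL_{d'_s}(\Z)\subset\GL_{d'_s}(\Z_p)$, so the two types have exactly the shape demanded by the hypotheses of Lemma~\ref{isrefg}.

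Next I would convert the hypothesis on the reductions into a statement about $F$ and $F'$. Formation of the formal completion along the zero section commutes with the base change $\Z_p\to\F_p$ applied to $\cat_0$; hence the reduction modulo $p$ of $F$ represents the formal completion of $\cat\otimes_{\Z_p}\F_p$ along its zero section, and likewise for $F'$ and $\cat'$. A group-scheme isomorphism $\cat\otimes\F_p\cong\cat'\otimes\F_p$ carries the zero section to the zero section, hence restricts to an isomorphism of connected components and induces an isomorphism of their formal completions; by the Yoneda lemma this yields an isomorphism of formal group laws $F\otimes\F_p\cong F'\otimes\F_p$ over $\F_p$. In particular the special fibres have equal dimension, so $d=d'$.

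With $d=d'$, with the logarithms of $F$ and $F'$ of the types above (their upper-left blocks invertible over $\Z_p$), and with $F\otimes\F_p\cong F'\otimes\F_p$, Lemma~\ref{isrefg} yields an isomorphism $F\cong F'$ over $\Z_p$, equivalently $\hat\cat\cong\hat\cat'$, as claimed. The only delicate point — not really an obstacle — is the compatibility asserted at the start of the third paragraph, namely that the reduction modulo $p$ of the formal group law representing $\hat\cat$ represents the formal completion of the special fibre $\cat\otimes\F_p$ along its zero section; once that is granted, the argument is a straightforward combination of Theorem~\ref{tpt3} and Lemma~\ref{isrefg}.
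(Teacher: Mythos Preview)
Your proposal is correct and follows essentially the same route as the paper: invoke Theorem~\ref{tpt3} to get formal group laws of the required shape, pass from the isomorphism of reductions of $\cat,\cat'$ to an isomorphism of the reductions of the representing formal group laws, and then apply Lemma~\ref{isrefg}. You have simply unpacked what the paper dismisses as ``clear'' (the compatibility of formal completion with reduction, and the equality $d=d'$), which is entirely appropriate.
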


\begin{proof}
It is clear that the reductions of $\hat\cat$ and
$\hat\cat'$ are isomorphic. Then by Theorem~\ref{tpt3},
$\hat\cat$ and $\hat\cat'$ are represented by formal
group laws which satisfy the conditions of Lemma~\ref{isrefg}.
Therefore $\hat\cat$ and $\hat\cat'$ are isomorphic.
\end{proof}

\noindent{\bf Remark.} It is easy to give an example
of tori $T$, $T'$ such that $\hat\cat$ and $\hat\cat'$
are isomorphic, but the reductions of $\cat$ and $\cat'$
are not. Take $K$ to be the unramified extension of $\Q_p$
of degree 2, and define
$\chi,\chi'\colon\Gal(K/\Q_p)\to\GL_2(\Z)$ as follows:
$$
\chi(\Delta_p)=\begin{pmatrix}
1 & 0\\
0 & -1
\end{pmatrix}, \quad
\chi'(\Delta_p)=\begin{pmatrix}
0 & 1\\
1 & 0
\end{pmatrix}.
$$
Let $T$, $T'$ be tori over $\Q_p$ corresponding to
the representations $\chi,\chi'$. Then the connected
components of the reductions of $\cat$, $\cat'$ are
tori over $\F_p$ which correspond to representations
$\bar\chi,\bar\chi'\colon\Gal(\F_{p^2}/\F_p)\to\GL_2(\Z)$
defined by
$$
\bar\chi(\Delta_p)=\begin{pmatrix}
1 & 0\\
0 & -1
\end{pmatrix}, \quad
\bar\chi'(\Delta_p)=\begin{pmatrix}
0 & 1\\
1 & 0
\end{pmatrix}.
$$
Clearly, these representations are not isomorphic, and
hence, the reductions of $\cat$ and $\cat'$ are not
isomorphic. On the other hand, by Theorem~\ref{tpt3},
$\hat\cat$, $\hat\cat'$ can be represented by formal
group laws with logarithms of types
$u=pI_2-\chi(\Delta_p)\bt$,
$u'=pI_2-\chi'(\Delta_p)\bt$, respectively.
Since
$$
u\begin{pmatrix}
1 & 1\\
-1 & 1
\end{pmatrix}=\begin{pmatrix}
1 & 1\\
-1 & 1
\end{pmatrix}u',
$$
Proposition~\ref{hon} (iii) implies that for $p\neq2$,
$\hat\cat$ and $\hat\cat'$ are isomorphic.

\section{One-dimensional tori over $\Q$}

We keep the notations of Section 6. Suppose that
$T$ is a non-split one-dimensional torus over $L=\Q$
which is split over a tamely ramified quadratic extension $K$ of $\Q$.
Then $d=1$, $n=2$, $\Gal(K/\Q)=\{\id_{\Q},\sigma\}$ and
the discriminant of $\cao_K$ is odd, i.e., there exists
$\xi\in\cao_K$ such that $1,\xi$ form a free $\Z$-basis
of $\cao_K$ and $q=(\sigma(\xi)-\xi)^2$ is odd.
Let $x^2-rx+s\in\Z[X]$ be the minimal polynomial for $\xi$.
Then $q=r^2-4s$. Finally, put $e_0=1$, $e_1=\xi$.

\begin{tip41}\label{tip41}
$\Lambda$ is of $p$-type $v_p=pI_2-V_p\bt_p$, where
$$
V_p=\left\{
\begin{array}{rl}
I_2, & {\it if} \quad\bigl(\frac qp\bigr)=1\\
\begin{pmatrix}
1 & r\\
0 & -1
\end{pmatrix}, & {\it if} \quad\bigl(\frac qp\bigr) =-1\\
\begin{pmatrix}
1 & r/2\\
0 & 0
\end{pmatrix}, & {\it if} \quad p\mid q
\end{array}\right..
$$
\end{tip41}

\begin{proof}
Let $(p,q)=1$. Then $\Q_p(\xi)/\Q_p$ is an unramified
extension of degree 1, if $\bigl(\frac qp\bigr)=1$, and 2,
if $\bigl(\frac qp\bigr) =-1$. Indeed, for $p\neq2$, it
follows from the definition of the Jacobi symbol, and
for $p=2$, it is true, since $q\equiv 1\mod 4$.

Denote $\phi(z_1,z_2)=z_1+\xi z_2$.
By Corollary from Proposition~\ref{wr_mgs}, we have
$\Lambda_1+\Lambda_2\xi=\LL \circ\phi$.

We consider the action of $\bt_p$ on $\Q_p(\xi)[[z_1,z_2]]$
introduced in Section~2. Then according to Lemma~\ref{ass},
$$(p-\bt_p)(\Lambda_1+\Lambda_2\xi)\equiv
((p-\bt_p)\LL )\circ\phi\equiv0\mod p.$$

If $\bigl(\frac qp\bigr)=1$, then $\bt_p(\xi)=\xi$, and
we get $p\Lambda_1-\bt_p\Lambda_1\equiv0\mod p$
and $p\Lambda_2-\bt_p\Lambda_2\equiv0\mod p$.
If $\bigl(\frac qp\bigr)=-1$, then $\bt_p(\xi)=r-\xi$, and we obtain
$p\Lambda_1-\bt_p(\Lambda_1+r\Lambda_2)\equiv0\mod p$
and $p\Lambda_2+\bt_p\Lambda_2\equiv0\mod p$.
Thus in each case, $\Lambda$ is of the required $p$-type.

We proceed with the case $p\mid q$. Since $1, \xi$ is a free
$\Z$-basis in $\cao_K$, $p^2\nmid q$.
Let $\pi=r-2\xi$. We have $\pi^2=q$. Therefore, $\Q_p(\xi)/\Q_p$
is a totally ramified extension of degree 2, and $e_0,e_1$ form
a free $\Z_p$-basis of $\Z_p[\xi]$. Then Proposition~\ref{wr_mgs}
implies that $\Phi=\car_{\Z_p[\xi]/\Z_p}((\F_m)^d_{\Z_p[\xi]})$.
Take $e'_0=1$, $e'_1=\pi$, and denote
$\Phi'=\car'_{\Z_p[\xi]/\Z_p}((\F_m)^d_{\Z_p[\xi]})$.
Let $\Lambda'$ be the logarithm of $\Phi'$.
Then by Proposition~\ref{tran_wr}, we get
$\Lambda^{-1}\circ (I_d\otimes W)\Lambda'
\in\Hom_{\Z_p}(\Phi',\Phi)$, where
$$W=\begin{pmatrix}
1 & r\\
0 & -2
\end{pmatrix}.
$$
Applying Proposition~\ref{tip3} for the data $d=1$, $n_1=1$,
$n_2=2$, we deduce that $\Lambda'$ is of $p$-type $pI_2-J_2\bt_p$.
Hence, Proposition~\ref{hon} (iii) implies that $\Lambda$ is
of $p$-type $pI_2-WJ_2W^{-1}\bt_p$ just as required.
\end{proof}

Since  $\sigma(\xi)=r-\xi$, in the basis $e_0$, $e_1$
the automorphism $\sigma$ is given by the matrix
$\begin{pmatrix}
1 & r\\
0 & -1
\end{pmatrix}$.
Therefore, $\psi(\sigma)=
\begin{pmatrix}
1 & 0\\
r & -1
\end{pmatrix}$. Since $T$ is non-split, $\chi(\sigma)=-1$.
Then
$\theta(\sigma)^T=
\begin{pmatrix}
-1 & -r\\
0 & 1
\end{pmatrix}$ by Theorem~\ref{q_mat}.

For a prime number $p$, let $\Xi(p)=\bigl(\frac qp\bigr)$,
if $(p,q)=1$; $\Xi(p)=0$, if $p\mid q$.
Further, let $F_\Xi$ be defined as at the end
of Section 2. Due to Proposition~\ref{glob},
$F_\Xi$ is a formal group law over $\Z$.
Finally, denote the formal group laws
$F_{r,s}(x,y)=(x+y+rxy)(1-sxy)^{-1}$
and $F_q(x,y)=x+y+\sqrt q xy$.

\begin{tpt4}\label{tpt4}
Let $T$ be a non-split one-dimensional torus over $\Q$
which is split over a tamely ramified quadratic
extension of $\Q$, and $\cat$ be its N\'eron model. Then

{\rm I.} $F_\Xi$ represents $\hat\cat$;

{\rm II.} $F_\Xi$ is strongly isomorphic
to $F_{r,s}$ over $\Z$;

{\rm III.} $F_\Xi$ is strongly isomorphic
to $F_q$ over $\Z[\xi]$.

\end{tpt4}

\begin{proof}
According to Theorem~\ref{repr}, a formal group law from
a universal fixed pair for $(\Phi,\Gal(\Q(\xi)/\Q))$ represents
$\hat\cat$. Take
$$
Q=\begin{pmatrix}
-r & (r+1)/2\\
2 & -1
\end{pmatrix}.
$$
Then condition (iii) of Theorem~\ref{ufix_ex},I is obviously
satisfied for $L=\Q_p$ and any prime $p$. Further, applying
Proposition~\ref{tip41} one can directly check that the upper-left
entry of $Q^{-1}v_pQ$ is equal to $pI_d-\Xi(p)\bt_p$.
On the other hand, Proposition~\ref{glob} implies that
the logarithm of $F_\Xi$ is of $p$-type $pI_d-\Xi(p)\bt_p$.
Then by Proposition~\ref{glufix}, there exists
$f_\Xi\in\Hom_{\Z}(F_\Xi,\Phi)$ such that $(F_\Xi,f_\Xi)$ is
a universal fixed pair for $(\Phi,\Gal(\Q(\xi)/\Q))$. Thus $F_\Xi$
represents $\hat\cat$.

Proposition~\ref{wr_mgs} implies that $\Phi=(\Phi_1,\Phi_2)$,
where $\Phi_1(x_1,x_2;y_1,y_2)=x_1+y_1+x_1y_1-sx_2y_2,$
$\Phi_2(x_1,x_2;y_1,y_2)=x_2+y_2+x_1y_2+x_2y_1+rx_2y_2.$
Besides, one can check that $g(x_1,x_2)=x_2(1+x_1)^{-1}$
belongs to $\Hom_{\Z}(\Phi,F_{r,s})$ and its linear
coefficient is $(0,1)\in\M_{1,2}(\Z)$. According to
Proposition~\ref{hon} (ii), for every prime $p$,
there exists $u'_p\in\cae_p$ such that the logarithm
$\lambda_{r,s}$ of $F_{r,s}$ is of $p$-type $u'_p$.
By Proposition~\ref{hon} (iii), we have
$u'_p(0,1)=(\hat w_p,\tilde w_p)v_p$ for some
$\hat w_p,\tilde w_p\in\cae_p$.
Applying Proposition~\ref{tip41} we obtain $\hat w_p=0$
and $u'_p=\tilde w_p(p-\Xi(p)\bt_p)$. Therefore,
$\tilde w_p\equiv1\mod\bt_p$, and $\lambda_{r,s}$ is
of $p$-type $\tilde w_p^{-1}u'_p=p-\Xi(p)\bt_p$.
Further, Proposition~\ref{hon} (iii) implies that
$\lambda_{r,s}^{-1}\circ\lambda_{\Xi}
\in\Hom_{\Z_p}(F_{\Xi},F_{r,s})$.
Since it is true for any prime $p$,
$\lambda_{r,s}^{-1}\circ\lambda_{\Xi}
\in\Hom_{\Z}(F_{\Xi},F_{r,s})$, i.e., $ F_{\Xi}$
and $F_{r,s}$ are strongly isomorphic over $\Z$.

Finally, if $\sqrt q=r-2\xi$, then
$x(1+\xi x)^{-1}\in\Hom_{\Z[\xi]}(F_{r,s},F_q)$.
\end{proof}

Part I of the above Proposition is a special case
of Theorem 1.5 of \cite{DN}, and part III is
precisely Theorem 4 of \cite{Ho1}.

\section{Tori split over a tamely ramified abelian extension of $\Q$}

We keep the notation of Section~6. Suppose that $L=\Q$ and $K/\Q$
is a tamely ramified abelian extension. Due to the Kronecker-Weber
Theorem, one can replace $K$ by a larger field so that
$K=\Q(\xi)$, where $\xi$ is a primitive $q$-th root of unity and
$q=p_1\cdots p_k$ is the product of distinct primes $p_i$. Then
$K$ is the composite of the extensions $\Q(\xi_i)/\Q$, $1\leq i
\leq k$, where $\xi_i=\xi^{q/p_i}$, and the degree of $K/\Q$ is
$n=(p_1-1)\cdots(p_k-1)$. Denote
$K_i=\Q_{p_i}(\xi_1,\ldots,\xi_{i-1},\xi_{i+1},\ldots,\xi_k)$.

For any $i=1,\dots,k$, fix $s_i\in\Z$ such that $s_i$ is a
multiplicative generator modulo $p_i$, i.e. $s_i\not\equiv 0\mod
p_i$ and $s_i^\alpha\not\equiv 1\mod p_i$ for any $1\leq
\alpha\leq p_i-2$. Further, for any integer $l$ relatively prime
to $p_i$, take $0\leq r_i(l)\leq p_i-2$ such that $l\equiv
s_i^{r_i(l)}\mod p_i$. Denote also
$n_i=\prod_{j=1}^{i-1}(p_j-1)$ for $1\leq i\leq k$.
Consider the bijection
$$
\gamma\colon\prod_{i=1}^k\{0,\ldots,p_i-2\}\to\{0,\ldots,n-1\}
$$
given by
$ \gamma(\alpha_1,\ldots,\alpha_k)= \sum_{i=1}^k\alpha_i n_i$.
Notice that for any matrices
$U^{(i)}=\{a^{(i)}_{\alpha_i,\beta_i}\}_
{0\leq\alpha_i,\beta_i\leq p_i-2}, 1\leq i\leq k$, and
$U^{(1)}\otimes\cdots\otimes U^{(k)}=\{a_{s,t}\}_{0\leq s,t\leq
n-1}$, we have
$a_{\gamma(\alpha_1,\ldots,\alpha_k),\gamma(\beta_1,\ldots,\beta_k)}=
a^{(1)}_{\alpha_1,\beta_1}\cdots a^{(k)}_{\alpha_k,\beta_k}$.

For $0\leq \alpha_i\leq p_i-2$, $1\le i\le k$, put
$e_{\gamma(\alpha_1,\ldots,\alpha_k)}=\prod_{i=1}^k\xi_i^{s_i^{\alpha_i}}$.
Obviously, $e_0,\ldots,e_{n-1}$ is a free $\Z$-basis of $\cao_K=\Z[\xi]$.

\begin{tip51}\label{tip51}
$\Lambda$ is of $p$-type $v_p=pI_{nd}-I_d\otimes V_p\bt_p$, where
$V_p=P_{p_1-1}^{r_1(p)}\otimes\cdots\otimes P_{p_k-1}^{r_k(p)}$ if
$p\ne p_i$ for any $i=1,\dots,k$ and
$$
V_{p_i}=P_{p_1-1}^{r_1(p_i)}\otimes\cdots \otimes
P_{p_{i-1}-1}^{r_{i-1}(p_i)}
\otimes(p_iI_{p_i-1}-J'_{p_i-1})\otimes
P_{p_{i+1}-1}^{r_{i+1}(p_i)}\otimes \cdots\otimes
P_{p_k-1}^{r_k(p_i)}.
$$
\end{tip51}

\begin{proof}
Let $p\ne p_i$ for any $1\le i\le k$. Then $\Q_p(\xi)/\Q_p$
is unramified. For any $1\leq l\leq d$, consider
$\phi_l=\sum_{i=0}^{n-1}z_{id+l}e_i$.
By Corollary from Proposition~\ref{wr_mgs}, we have
$\sum_{j=0}^{n-1}\Lambda_{jd+l}e_j=\LL \circ\phi_l$
for any $1\leq l\leq d$.

We consider the action of $\bt_p$ on $\Q_p(\xi)[[z_1,\ldots,z_{nd}]]$
introduced in Section 2. Then applying Lemma~\ref{ass} we get
$$
(p-\bt_p) \sum_{j=0}^{n-1}\Lambda_{jd+l}e_j\equiv
((p-\bt_p)\LL )\circ\phi_l\equiv0\mod p.
$$
Hence, $\sum_{j=0}^{n-1}p\Lambda_{jd+l}e_j\equiv
\sum_{j=0}^{n-1}(\bt_p\Lambda_{jd+l})e_j^p\mod p$.
If $0\leq \alpha_i,\alpha'_i\leq p_i-2$, $1\le i\le k$,
are such that $\alpha'_i\equiv\alpha_i+r_i(p)\mod p_i-1$,
then $e_{\gamma(\alpha_1,\ldots,\alpha_k)}^p=
e_{\gamma(\alpha'_1,\ldots,\alpha'_k)}$. Therefore, we obtain
$$
p\Lambda_{\gamma(\alpha'_1,\ldots,\alpha'_k)d+l}-
\bt_p\Lambda_{\gamma(\alpha_1,\ldots,\alpha_k)d+l}\equiv0\mod p.
$$
Thus $\Lambda$ is of the required $p$-type.

We proceed with the case $p=p_i$.
Since $e_0,\ldots,e_{n-1}$ is a free $\Z_{p_i}$-basis of $\Z_{p_i}[\xi]$,
Proposition~\ref{wr_mgs} implies that
$\Phi=\car_{\Z_{p_i}[\xi]/\Z_{p_i}}((\F_m)^d_{\Z_{p_i}[\xi]})$.
Take $\pi_i$ such that $\pi_i^{p_i-1}=-p_i$. Then by Lubin-Tate theory,
$\Q_{p_i}(\xi_i)=\Q_{p_i}(\pi_i)$ and there exists
$W=\{w_{\alpha,\beta}\}_{0\leq \alpha,\beta\leq p_i-2}\in\GL_{p_i-1}(\Z_{p_i})$
such that
$\pi_i^\beta=\sum_{\alpha=0}^{p_i-2}w_{\alpha,\beta}\xi_i^{s_i^\alpha}$
for any $0\le\beta\le p_i-2$.

Take $ e'_0,\ldots,  e'_{n-1}$, another free $\Z_{p_i}$-basis of $\Z_{p_i}[\xi]$,
where
$$
e'_{\gamma(\alpha_1,\ldots,\alpha_k)}=\pi_i^{\alpha_i}
\prod_{1\le j\le k,\, j\neq i}
\xi_j^{s_j^{\alpha_j}},\qquad
0\leq \alpha_i\leq p_i-2, 1\le i\le k.
$$
Denote for this basis
$\Phi'=\car'_{\Z_p[\xi]/\Z_p}((\F_m)^d_{\Z_p[\xi]})$.
Let $\Lambda'$ be the logarithm of $\Phi'$,
then Proposition~\ref{tran_wr} implies that
$\Lambda^{-1}\circ (I_d\otimes \hat W)\Lambda'
\in\Hom_{\Z_p}(\Phi',\Phi)$, where
$$
\hat W=I_{p_1-1}\otimes\cdots\otimes I_{p_{i-1}-1}\otimes
W\otimes I_{p_{i+1}-1}\otimes\cdots\otimes I_{p_k-1}.
$$
For any $1\leq l\leq d$, consider
$\phi'_l=\sum_{0\leq i\le n-1}z_{id+l} e'_i$.
By Corollary from Proposition~\ref{wr_mgs}, we have
$\sum_{0\leq i\leq n-1} \Lambda'_{id+l} e'_i=\LL \circ\phi'_l$.

Remind that for any $\kappa\in\Q_{p_i}(\xi)[[y_1,\ldots,y_m]]$,
there exist unique
$\kappa^{(j)}\in K_i[[y_1,\ldots,y_m]]$, $0\leq j\leq p_i-2$,
such that $\kappa=\sum_{j=0}^{p_i-2}\kappa^{(j)}\pi_i^j$.
Thus for any $0\leq\alpha_i\leq p_i-2$,
$$
\sum_{\scriptstyle 0\leq \alpha_j\leq p_j-2\atop\scriptstyle j\neq i}
\Lambda'_{\gamma(\alpha_1,\ldots,\alpha_k)d+l} \prod_{j\neq i}
\xi_j^{s_j^{\alpha_j}}=\left(\LL \circ\phi'_l\right)^
{(\alpha_i)}.
$$
If $\alpha_i\neq 0$, then by Lemma~\ref{sm_ram}
$$
\sum_{\scriptstyle 0\leq \alpha_j\leq p_j-2\atop\scriptstyle j\neq
i}\Lambda'_{\gamma(\alpha_1,\ldots,\alpha_k)d+l} \prod_{j\neq
i}\xi_j^{s_j^{\alpha_j}}\equiv
\LL ^{(\alpha_i)}\circ\phi_l^{\prime(0)}=0\mod\Z_{p_i}[\xi],
$$
whence
$p_i\Lambda'_{\gamma(\alpha_1,\ldots,\alpha_k)d+l}\equiv0\mod
p_i$.

If $\alpha_i=0$, then Lemma~\ref{sm_ram} implies
$$
\sum_{\scriptstyle 0\leq \alpha_j\leq p_j-2\atop\scriptstyle j\neq
i}\Lambda'_{\gamma(\alpha_1,\ldots,\alpha_k)d+l} \prod_{j\neq
i}\xi_j^{s_j^{\alpha_j}}\equiv
\LL \circ\phi_l^{\prime(0)}\mod p_i.
$$
We consider the action of $\bt_{p_i}$ on $K_i[[z_1,\ldots,z_{nd}]]$
introduced in Section 2. Then applying Lemma~\ref{ass} we get
$$
(p_i-\bt_{p_i})\sum_{\scriptstyle 0\leq \alpha_j\leq
p_j-2\atop\scriptstyle j\neq
i}\Lambda'_{\gamma(\alpha_1,\ldots,\alpha_k)d+l} \prod_{j\neq
i}\xi_j^{s_j^{\alpha_j}}\equiv
(p_i-\bt_{p_i})\left(\LL \circ\phi_l^{\prime(0)}\right)\equiv
$$
$$
\left((p_i-\bt_{p_i})\LL \right)\circ\phi_l^{\prime(0)}\equiv
0\mod p_i.
$$
Therefore,
$$
p_i\!\!\!\!\!\sum_{\scriptstyle 0\leq \alpha_j\leq
p_j-2\atop\scriptstyle j\neq
i}\Lambda'_{\gamma(\alpha_1,\ldots,\alpha_k)d+l} \prod_{j\neq
i}\xi_j^{s_j^{\alpha_j}}\equiv \!\!\!\!\!\!\!\sum_{\scriptstyle
0\leq \alpha_j\leq p_j-2\atop\scriptstyle j\neq
i}\!\!\!\!\bt_{p_i}\Lambda'_{\gamma(\alpha_1,\ldots,\alpha_k)d+l}
\prod_{j\neq i}\xi_j^{s_j^{\alpha_j+r_j(p_i)}}\!\!\!\!\mod p_i.
$$
As before, choose $0\leq\alpha'_j\leq p_j-2$ for $j=1,\ldots,i-1,i+1,\ldots,k$
such that $\alpha'_j\equiv\alpha_j+r_j(p_i)\mod p_j-1$ and put $\alpha_i=0$.
It gives
$$
p_i\Lambda'_{\gamma(\alpha'_1,\ldots,\alpha'_k)d+l}\equiv
\bt_{p_i}\Lambda'_{\gamma(\alpha_1,\ldots,\alpha_k)d+l}\mod
p_i.
$$
Thus $\Lambda'$ is of a $p_i$-type
$v'=p_iI_{nd}-I_d\otimes V'\bt_{p_i}$, where
$$
V'=P_{p_1-1}^{r_1(p_i)}\otimes\cdots
\otimes P_{p_{i-1}-1}^{r_{i-1}(p_i)}
\otimes J_{p_i-1}\otimes
P_{p_{i+1}-1}^{r_{i+1}(p_i)}\otimes
\cdots\otimes P_{p_k-1}^{r_k(p_i)}.
$$
Hence, by Proposition~\ref{hon} (iii), $\Lambda$ is of $p_i$-type
$v=(I_d\otimes\hat W)v'(I_d\otimes\hat W)^{-1}=
p_iI_{nd}-I_d\otimes V\bt_{p_i}$, where $V=\hat WV'\hat W^{-1}$.

Since $1=\sum_{\alpha=0}^{p_i-2}(-\xi_i^{s_i^\alpha})$, we get
$w_{\alpha,0}=-1$ for any $0\leq \alpha\leq p_i-2$, i.e., all
entries of the first column of $W$ are equal to $-1$. Further, let
$W^{-1}=\{w'_{\alpha,\beta}\}_{0\leq \alpha,\beta\leq p_i-2}$.
For any $0\leq\beta\leq p_i-2$, there exists
$\tau_\beta\in\Gal(\Q_{p_i}(\xi_i)/\Q_{p_i})$
such that $\tau_\beta(\xi_i)=\xi_i^{s_i^\beta}$.
Denote $\zeta_\beta=\tau_\beta(\pi_i)/\pi_i$.
Then $\zeta_\beta^{p_i-1}=1$, $\zeta_\beta\in\Z_{p_i}$,
and hence, $\tau_\beta(\xi_i)=\sum_{\alpha=0}^{p_i-2}
w'_{\alpha,0}\tau_\beta(\pi_i)^\alpha=\sum_{\alpha=0}^{p_i-2}
w'_{\alpha,0}\zeta_\beta^\alpha\pi_i^\alpha$ which implies
$w'_{\alpha,\beta}=\zeta_\beta^\alpha w'_{\alpha,0}$.
Since $\sum_{\beta=0}^{p_i-2}\zeta_\beta^\alpha=0$
for any $1\le\alpha\le p_i-2$, we get
$$
-1=\sum_{\beta=0}^{p_i-2}\xi_i^{s_i^\beta}=
\sum_{\alpha=0}^{p_i-2}\sum_{\beta=0}^{p_i-2}
\zeta_\beta^\alpha w'_{\alpha,0}\pi_i^\alpha=(p_i-1)w'_{0,0},
$$
and then $w'_{0,\beta}=w'_{0,0}=-(p_i-1)^{-1}$, i.e., all entries of
the first row of $W^{-1}$ are equal to $-(p_i-1)^{-1}$. Therefore,
$WJ_{p_i-1}W^{-1}=(p_i-1)^{-1}J'_{p_i-1}$, and thus
$$
V=P_{p_1-1}^{r_1(p_i)}\otimes\cdots \otimes
P_{p_{i-1}-1}^{r_{i-1}(p_i)} \otimes (p_i-1)^{-1}J'_{p_i-1}\otimes
P_{p_{i+1}-1}^{r_{i+1}(p_i)}\otimes \cdots\otimes
P_{p_k-1}^{r_k(p_i)}.
$$
Clearly, $\Lambda$ is also of $p_i$-type
$(I_{nd}-I_d\otimes W'\bt_{p_i})v$, where
$$
W'=P_{p_1-1}^{r_1(p_i)}\otimes\cdots \otimes
P_{p_{i-1}-1}^{r_{i-1}(p_i)}\otimes
\left(I_{p_i-1}-(p_i-1)^{-1}J'_{p_i-1}\right) \otimes
P_{p_{i+1}-1}^{r_{i+1}(p_i)}\otimes\cdots \otimes
P_{p_k-1}^{r_k(p_i)}.
$$
Finally, since
$\left(I_{p_i-1}-(p_i-1)^{-1}J'_{p_i-1}\right)J'_{p_i-1}=0$, we
obtain that $\Lambda$ is of the  required $p_i$-type.
\end{proof}

Let $\sigma_i\in\Gal(\Q(\xi)/\Q), 1\leq i\leq k$,
be such that $\sigma_i(\xi_i)=\xi_i^{s_i}$,
$\sigma_i(\xi_j)=\xi_j$, for $j\neq i$.
They span $\Gal(\Q(\xi)/\Q)$ and,
in the basis $e_0,\ldots,e_{n-1}$,
are given by the matrices
$$
\hat P_i=I_{p_1-1}\otimes\cdots\otimes I_{p_{i-1}-1}\otimes
P_{p_i-1}\otimes I_{p_{i+1}-1}\otimes\cdots\otimes I_{p_k-1}.
$$
Therefore, $\psi(\sigma_i)=(\hat P_i^{-1})^T=\hat P_i$.
Denote $U_i=\chi(\sigma_i)^T$. Then $U_i U_j=U_j U_i$
and by Theorem~\ref{q_mat},
$\theta(\sigma_i)^T=U_i\otimes\hat P_i^T$.

From now on, we employ the ``matrix-of-matrices'' representation
described in Introduction. Denote $D_i=U_i\otimes\hat
P_i^T-I_{nd}$. Then, under our convention,
$D_i=\{d^{(i)}_{s,t}\}\in\M_n(\M_d(\Z))$, where $ d^{(i)}_{s,t}=
\delta_{s}^{t-n_i}U_i-\delta_{s}^{t}I_d$ for
$n_i=\prod_{j=1}^{i-1}(p_j-1)=\gamma(\epsilon_i)$, and $\epsilon_i$ stands for
the $k$-vector $(0,\ldots,1,\ldots,0)$ with unity on the $i$-th position.

\begin{ms_pabz}\label{ms_pabz}
Denote $U^{\langle\alpha\rangle}=U_1^{-\alpha_1}\cdots U_k^{-\alpha_k}$
for $0\leq\alpha_i\leq p_i-2$
and let $Q=\{q_{s,t}\}\in\GL_n(\M_d(\Z))$, where
$$
q_{\gamma(\alpha), \gamma(\beta)} =\left\{\begin{array}{rl}
U^{\langle\alpha\rangle}, &
{\it if} \quad \gamma(\beta) = 0 \quad {\it and} \quad
\gamma(\alpha)\ne 0\\
\delta_{\gamma(\alpha)}^{\gamma(\beta)}I_d, & {\it otherwise}
\end{array}\right..
$$
Then for any $1\le i\le k$,
$$
Q^{-1}D_iQ=
\begin{pmatrix}
0 & \hat D_i\\
0 & \tilde D_i
\end{pmatrix},
$$
where $\hat D_i\in\M_{1,n-1}(\M_d(\Z))$
and $\tilde D_i\in\M_{n-1}(\M_d(\Z))$.

Moreover, for any prime $p$ there exist
$\hat C^{(p)}_i\in\M_{n-1,1}(\M_d(\Z_p))$ and
$\tilde C^{(p)}_i\in\M_{n-1}(\M_d(\Z_p))$ such that
$\sum_{i=1}^k\hat C^{(p)}_i\hat D_i+\tilde C^{(p)}_i\tilde D_i
\in\GL_{n-1}(\M_d(\Z_p))$.
\end{ms_pabz}

\begin{proof}
Observe first that $Q^{-1}=\{q'_{s,t}\}\in\GL_n(\M_d(\Z))$ is the
matrix given by $q'_{s,t}=-q_{s,t}$ if $s=0$ and $t\ne 0$, and
$q'_{s,t}=q_{s,t}$ otherwise.

We have $Q^{-1}D_iQ=\{a^{(i)}_{s,t}\}_{0\leq s,t\leq n-1}$, where
$$
a^{(i)}_{s,t}=\sum_{s',t'=0}^{n-1} q'_{s,s'}d^{(i)}_{s',t'}q_{t',t}.
$$
Direct calculation shows that
$$
a^{(i)}_{0,0}=-I_d+U_iU^{\langle \epsilon_i\rangle}=-I_d+U_iU_i^{-1}=0
$$
and
$$
a^{(i)}_{\gamma(\alpha),0}=U^{\langle\alpha\rangle}-
U^{\langle\alpha\rangle}U_iU^{\langle \epsilon_i\rangle}=0
$$
if $\gamma(\alpha)\ne 0$, proving the first assertion.

In order to prove the existence of $\hat C^{(p)}_i$ and
$\tilde C^{(p)}_i$, we need to compute explicitly the matrices
$\hat D_i=\{\hat a^{(i)}_t\}_{1\leq t\leq n-1}$ and
$\tilde D_i=\{a^{(i)}_{s,t}\}_{1\leq s,t\leq n-1}$. Here we get
$$
\hat a^{(i)}_t=a^{(i)}_{0,t}=\delta_{n_i}^{t}U_i
$$
and
$$
a^{(i)}_{\gamma(\alpha),\gamma(\beta)}=
-\delta^{\gamma(\beta)}_{n_i}U_iU^{\langle\alpha\rangle}
+ \delta_{\gamma(\alpha)+n_i}^{\gamma(\beta)}U_i-
\delta_{\gamma(\alpha)}^{\gamma(\beta)}I_d
$$
for $\gamma(\alpha)\ne 0$ and $\gamma(\beta)\ne 0$.

By Lemma~\ref{ker_zero}, we only need to prove that, for any prime $p$
$$
\left(\cap_{i=1}^k\Kera (\hat D_i\otimes \F_p)\right)\cap
\left(\cap_{i=1}^k\Kera (\tilde D_i\otimes \F_p)\right)=\{0\}.
$$
Let $(x_1,\ldots, x_{n-1})\in\F_p^d$ be such that
$(x_1,\ldots,x_{n-1})$ lies in the intersection of the kernels.
Denote $\overline U_i=U_i\otimes\F_p\in\GL_d(\F_p)$, ${\overline
U}^{\langle\alpha\rangle}=
U^{\langle\alpha\rangle}\otimes\F_p\in\GL_d(\F_p)$. Then, for any
$1\leq i\leq k$, we have
$$
\left\{\begin{array}{l}
\overline U_i x_{n_i}=0\\
-\overline U_i \overline U^{\langle\alpha\rangle}x_{n_i}+
\overline U_i x_{\gamma(\alpha)+n_i}
-x_{\gamma(\alpha)}=0\\
\end{array}\right.
$$
which gives
$$
\left\{\begin{array}{l}
x_{n_i}=0\\
\overline U_i x_{\gamma(\alpha)+n_i}
=x_{\gamma(\alpha)}\\
\end{array}\right..
$$
This implies $x_s=0$ for any $1\leq s\leq n-1$ as required.
\end{proof}

For a prime number $p$, define $\Xi(p)\in M_d(\Z)$
in the following way:
if $p\neq p_i$ for any $1\leq i\leq k$, put
$\Xi(p)=\prod_{i=1}^kU_i^{r_i(p)}=\chi(\Delta_p)^T$,
where $\Delta_p\in\Gal(\Q(\xi)/\Q)$ is the Frobenius
automorphism corresponding to $p$; if $p=p_i$, put
$$
\Xi(p_i)=\left(p_iI_d-\sum_{j=0}^{q-2}U_i^j\right)
\prod_{1\le j\le k,\, j\neq i}U_j^{r_j(p_i)}
=\left(p_iI_d-\!\!\!\!\!\sum_{\tau\in G_i}\chi(\tau)^T\right)
\chi(\Delta_{p_i})^T,
$$
where
$G_i=\Gal(\Q(\xi)/\Q(\xi_1,\ldots,\xi_{i-1},\xi_{i+1},\ldots,\xi_k))$,
and $\Delta_{p_i}\in\Gal(\Q(\xi)/\Q(\xi_i))$ is the Frobenius
automorphism corres\-pon\-ding to $p_i$. Further, let $F_\Xi$ be
defined as at the end of Section 2. Due to Proposition~\ref{glob},
$F_\Xi$ is a formal group law over $\Z$.


Let $F'_\Xi$ be defined for the torus $T'$ similar to $F_\Xi$.

\begin{tpt5}\label{tpt5}
Let $T$, $T'$ be tori over $\Q$ which are split
over an abelian tamely ramified extension of $\Q$,
$\cat$, $\cat'$ be their N\'eron models, and
$\eta\colon T\to T'$ be a morphism. Then

{\rm I.} $F_\Xi$ represents $\hat\cat$.

{\rm II.} The linear coefficient of the homomorphism $F_\Xi\to F'_\Xi$
induced by $\eta\colon T\to T'$ is $C^T$.
\end{tpt5}

\begin{proof}
I. According to Theorem~\ref{repr}, a formal group law from
a universal fixed pair for $(\Phi,\Gal(\Q(\xi)/\Q) )$ represents
$\hat\cat$. Take $Q\in\GL_{nd}(\Z)$ as in Lemma~\ref{ms_pabz}.
Then condition (iii) of Theorem~\ref{ufix_ex},I is
satisfied for $L=\Q_p$ and any prime $p$. Further, we
calculate the upper-left $d\times d$-submatrix $u_p$ of $Q^{-1}v_pQ$
with the aid of Proposition~\ref{tip51}.

If $p\ne p_i$ for $1\leq i \leq k$, define
$r(p)=(r_1(p),\dots,r_k(p))$. Then
$$
u_p=\sum_{\alpha,\beta} \delta_0^{\gamma(\beta)}I_d
\left(p\delta^{\gamma(\alpha)}_{\gamma(\beta)}I_d
-\delta^{\gamma(\alpha)}_{\gamma(\beta-r(p))}I_d\bt_p\right)
U^{\langle\alpha\rangle}=pI_d-U_1^{r_1(p)}\cdots
U_k^{r_k(p)}\bt_p.
$$
If $p=p_i$, denote
$$
\delta(p_i,\alpha,\beta)=\delta^{\alpha_1}_{\beta_1-r_1(p_i)}\cdots
\delta^{\alpha_{i-1}}_{\beta_{i-1}-r_{i-1}(p_i)}
\delta^{\alpha_{i+1}}_{\beta_{i+1}-r_{i+1}(p_i)}\cdots
\delta^{\alpha_k}_{\beta_k-r_k(p_i)}.
$$
Then
$$
u_{p_i}=\sum_{\alpha,\beta} \delta_0^{\gamma(\beta)}I_d
\left(p_i\delta^{\gamma(\alpha)}_{\gamma(\beta)}I_d
-(p_i\delta^{\alpha_i}_{\beta_i}-1)
\delta(p_i,\alpha,\beta)I_d\bt_{p_i}\right)U^{\langle\alpha\rangle}
$$
$$=p_iI_d-U_1^{r_1(p_i)}\cdots
U_{i-1}^{r_{i-1}(p_i)}U_{i+1}^{r_{i+1}(p_i)}\cdots
U_k^{r_k(p_i)}\left(p_iI_d-\sum_{s=0}^{p_i-2}U_i^{s}\right)\bt_{p_i}.
$$
Obviously, in both cases $u_p=pI_d-\Xi(p)\bt_p$.

On the other hand, Proposition~\ref{glob} implies that
the logarithm of $F_\Xi$ is of $p$-type $pI_d-\Xi(p)\bt_p$.
Then by Proposition~\ref{glufix}, there exists
$f_\Xi\in\Hom_{\Z}(F_\Xi,\Phi)$ such that the linear coefficient
of $f_\Xi$ is $QI_{nd,d}$, and $(F_\Xi,f_\Xi)$ is a universal
fixed pair for $(\Phi,\Gal(\Q(\xi)/\Q))$. Thus $F_\Xi$
represents $\hat\cat$.

II. The homomorphism $\Phi\to\Phi'$ induced by $\eta$ commutes with
the actions of $\Gal(K/L)$ on $\Phi$ and $\Phi'$. According to
Proposition~\ref{linco}, the linear coefficient of this
homomorphism is equal to $C^T\otimes I_n$. Let $(F_\Xi,f_\Xi)$ and
$(F'_\Xi,f'_\Xi)$ be as above. Then by Proposition~\ref{mor_ufix},
we get a homomorphism $F_\Xi\to F'_\Xi$ whose linear coefficient $Z$
satisfies $Q'I_{nd',d'}Z=(C^T\otimes I_n)QI_{nd,d}$. It implies
that $Z$ is the upper-left $d'\times d$-submatrix of
${Q'}^{-1}(C^T\otimes I_n)Q$. Thus we obtain
$$Z=\sum_{\alpha,\beta}\delta_0^\gamma(\beta)I_{d'}
\delta_{\gamma(\alpha)}^{\gamma(\beta)}
U^{\langle\beta\rangle}=C^T.$$
\end{proof}

\begin{appl}\label{appl}
Let $T$, $T'$ be tori over $\Q$ which are split
over an abelian tamely ramified extension of $\Q$, and
$\cat$, $\cat'$ be their N\'eron models. Then
the natural homomorphism $\Hom_\Q(T,T')\to\Hom_\Z(F_\Xi,F'_\Xi)$
is an isomorphism.
\end{appl}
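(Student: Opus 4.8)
The plan is to translate everything into matrices and then read the result off Theorem~\ref{tpt5} together with Honda's criterion. First I would reduce to the case where $T$ and $T'$ are split over a common field $K=\Q(\xi)$ with $q$ squarefree, replacing $K$ by the compositum of the two splitting fields if necessary (still a tamely ramified abelian extension of $\Q$ by the Kronecker--Weber Theorem, and this is exactly the normalization made at the start of Section~9). By the standard anti-equivalence between tori over $\Q$ split over $K$ and $\Z[\Gal(K/\Q)]$-lattices, $\Hom_\Q(T,T')$ is identified, in the fixed bases of $\cax'$ and $\cax$, with the set of matrices $C\in\M_{d,d'}(\Z)$ such that $\chi(\sigma)C=C\chi'(\sigma)$ for every $\sigma\in\Gal(K/\Q)$; and by Proposition~\ref{homo} a homomorphism $F_\Xi\to F'_\Xi$ over $\Z$ is uniquely determined by its linear coefficient $D\in\M_{d',d}(\Z)$. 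By the second part of Theorem~\ref{tpt5}, under these identifications the natural homomorphism is the assignment $C\mapsto C^T$. Injectivity is then formal: composing with the injective passage to the linear coefficient gives $\eta\mapsto C^T$, and both $\eta\mapsto C$ and transposition are injective.

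For surjectivity I would take $f\in\Hom_\Z(F_\Xi,F'_\Xi)$ with linear coefficient $D$ and show that $D^T$ intertwines $\chi$ and $\chi'$. Fix a prime $p$ and view $f$ over $\Z_p$. By Proposition~\ref{glob} the logarithms of $F_\Xi$ and $F'_\Xi$ have $p$-types $v_p=pI_d-\Xi(p)\bt_p$ and $v'_p=pI_{d'}-\Xi'(p)\bt_p$, so Proposition~\ref{hon}~(iii) produces $w_p\in\M_{d',d}(\cae_p)$ with $v'_pD=w_pv_p$. Writing $w_p=\sum_j w_{p,j}\bt_p^j$ and comparing the coefficients of $\bt_p^0$ and $\bt_p^1$ (in $\cae_p=\Z_p[[\bt_p]]$, where $\bt_p$ commutes with integral matrices) gives $w_{p,0}=D$ and $D\,\Xi(p)-\Xi'(p)\,D=p\,w_{p,1}$, hence $D\,\Xi(p)\equiv\Xi'(p)\,D\pmod p$. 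Next I would restrict to the primes $p\nmid q$, which are unramified in $K$ and for which $\Xi(p)=\chi(\Delta_p)^T$ and $\Xi'(p)=\chi'(\Delta_p)^T$ with $\Delta_p$ the Frobenius at $p$; then $D\,\chi(\Delta_p)^T\equiv\chi'(\Delta_p)^T\,D\pmod p$. Since $K/\Q$ is abelian, $\Delta_p$ depends only on $p\bmod q$, and by Dirichlet's theorem every element of $\Gal(K/\Q)$ occurs as $\Delta_p$ for infinitely many $p$; so for each fixed $\sigma$ the entries of the integer matrix $D\,\chi(\sigma)^T-\chi'(\sigma)^T\,D$ are divisible by infinitely many primes and therefore vanish. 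Transposing, $C:=D^T$ satisfies $\chi(\sigma)C=C\chi'(\sigma)$ for all $\sigma$, hence corresponds to some $\eta\in\Hom_\Q(T,T')$, whose image under the natural homomorphism has linear coefficient $C^T=D$ and thus equals $f$ by Proposition~\ref{homo}.

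I do not expect a serious obstacle. The only input not already available in the paper is Dirichlet's theorem on primes in arithmetic progressions, used merely to realize every Frobenius class for infinitely many primes (Chebotarev would serve equally well), and the two computational steps --- the $\bt_p$-coefficient bookkeeping and the ``divisible by infinitely many primes, hence zero'' argument --- are routine. The one thing to verify carefully is that conventions are aligned so that Theorem~\ref{tpt5} really identifies the natural homomorphism with $C\mapsto C^T$; granting that, injectivity is immediate and surjectivity is exactly the ``only if'' direction of Honda's criterion combined with the density of Frobenius automorphisms.
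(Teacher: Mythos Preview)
Your proof is correct and follows essentially the same line as the paper's. The one minor difference is that the paper claims the \emph{exact} equality $\chi'(\Delta_p)^TE=E\chi(\Delta_p)^T$ directly from Proposition~\ref{hon}(iii): if you push your coefficient comparison further, the relations $pw_{p,j}=w_{p,j-1}\Xi(p)$ for $j\ge2$ together with $\Xi(p)\in\GL_d(\Z)$ force $p^{j-1}\mid w_{p,1}$ for all $j$, hence $w_{p,1}=0$, so one prime per Galois element already suffices --- but your route via congruences modulo infinitely many primes reaches the same conclusion just as cleanly.
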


\begin{proof}
Theorem~\ref{tpt5} implies the injectivity.
Take a homomorphism from $F_\Xi$ to $F'_\Xi$ and denotes
its linear coefficient by $E$. If $p\neq p_i$ for any
$1\leq i\leq k$, then $F_{\Xi}$ and $F_{\Xi'}$ are of $p$-types
$pI_d-\chi(\Delta_p)^T\bt_p$ and $pI_d-\chi'(\Delta_p)^T\bt_p$,
respectively, where $\Delta_p\in\Gal(\Q(\xi)/\Q)$ is
the $p$-Frobenius.  By Proposition~\ref{hon} (iii),
we get $\chi'(\Delta_p)^TE=E\chi(\Delta_p)^T$.
Since any element of $\Gal(\Q(\xi)/\Q)$ is the $p$-Frobenius
for some prime $p$, the matrix $E^T\in\M_{d,d'}(\Z)$
defines a $\Gal(\Q(\xi)/\Q)$-modules homomorphism from $\cax'$
to $\cax$ which gives rise to a morphism from $T$ to $T'$.
According to Theorem~\ref{tpt5}, this morphism is
the inverse image of the homomorphism $F_\Xi\to F'_\Xi$ taken
in the beginning of the proof. This proves the surjectivity.
\end{proof}

\begin{isfoco}\label{isfoco}
Suppose that the assumptions of Proposition~\ref{appl}
are satisfied.
If $\hat\cat$ and $\hat\cat'$ are isomorphic, then
$T$ and $T'$ are isomorphic.
\end{isfoco}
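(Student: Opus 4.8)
The plan is to deduce the Corollary formally from Proposition~\ref{appl} and Theorem~\ref{tpt5}, once one observes that the natural map $\Hom_\Q(T,T')\to\Hom_\Z(F_\Xi,F'_\Xi)$ is compatible with composition. First I would record that, by Theorem~\ref{tpt5}.I, $F_\Xi$ represents $\hat\cat$ and $F'_\Xi$ represents $\hat\cat'$; hence an isomorphism $\hat\cat\cong\hat\cat'$ of formal group schemes is the same datum as an isomorphism $g\colon F_\Xi\to F'_\Xi$ of formal group laws over $\Z$, with inverse $h\colon F'_\Xi\to F_\Xi$ satisfying $h\circ g=\id_{F_\Xi}$ and $g\circ h=\id_{F'_\Xi}$.

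Next I would check functoriality of the map in Proposition~\ref{appl}. A morphism of split tori is determined by the induced matrix on character groups, and composition of morphisms of tori corresponds (contravariantly) to multiplication of these matrices. By Theorem~\ref{tpt5}.II, the homomorphism of formal group laws attached to $\eta\colon T\to T'$ with character matrix $C$ has linear coefficient $C^T$; by Proposition~\ref{homo} a homomorphism of formal group laws over a $\Q$-algebra is determined by its linear coefficient, and composition of such homomorphisms corresponds to multiplication of linear coefficients. Combining these, the assignment sending $\eta$ to the induced homomorphism of formal group laws sends $\id_T$ to $\id_{F_\Xi}$ and $\eta_2\circ\eta_1$ to (homomorphism induced by $\eta_2$)$\circ$(homomorphism induced by $\eta_1$); this is precisely the natural homomorphism of Proposition~\ref{appl}.

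The argument then closes. By surjectivity in Proposition~\ref{appl}, choose $\eta\colon T\to T'$ inducing $g$ and $\eta'\colon T'\to T$ inducing $h$. Then $\eta'\circ\eta\colon T\to T$ induces $h\circ g=\id_{F_\Xi}$, which is also the homomorphism induced by $\id_T$; by injectivity in Proposition~\ref{appl}, applied with $T'=T$, we get $\eta'\circ\eta=\id_T$, and symmetrically $\eta\circ\eta'=\id_{T'}$. Hence $\eta$ is an isomorphism and $T\cong T'$. The only point requiring a little care is the compatibility with composition of the map of Proposition~\ref{appl}; but as indicated this reduces to the elementary facts that morphisms of split tori are encoded by their matrices on character groups and that homomorphisms of formal group laws over a $\Q$-algebra are encoded, compatibly with composition, by their linear coefficients, so no further computation is needed.
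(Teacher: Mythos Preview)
Your proof is correct. The paper states this result as an immediate corollary of Proposition~\ref{appl} without giving any argument, so there is no separate proof to compare against; your write-up simply spells out the standard deduction (functoriality plus bijectivity on Hom-sets implies reflection of isomorphisms) that the paper leaves implicit.
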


Comparing the above Corollary with Proposition~\ref{isreto}
and the subsequent remark we see that while in the local case,
the completion of the N\'eron model for a torus contains even
less information than its reduction, in the global case
the completion of the N\'eron model determines the torus
uniquely up to isomorphism.

\noindent
\textit{Authors' addresses:}\\ \\
\textsc{Oleg Demchenko: \\
Department of Mathematics and Mechanics\\ St.Petersburg State
University\\ Universitetski pr.
28, Staryj Petergof\\ 198504, St.Petersburg, Russia} \\
\texttt{email: vasja@eu.spb.ru} \\ \\
\textsc{Alexander Gurevich: \\
Department of Mathematics\\ Bar-Ilan University\\
52900, Ramat-Gan, Israel} \\
\texttt{email: gurevia1@macs.biu.ac.il} \\ \\
\textsc{Xavier Xarles: \\
Departament de Matem\`atiques\\Universitat
Aut\`onoma de Barcelona\\08193, Bellaterra, Barcelona, Spain} \\
\texttt{email: xarles@mat.uab.cat}


\begin{thebibliography}{BRL}

\bibitem[BLR]{BLR} S. Bosch, W. L\"utkebohmert, M. Raynaud,
{\it N\'eron models}, Springer-Verlag, New-York Heidelberg
Berlin, 1990.

\bibitem[DN]{DN} C. Deninger, E. Nart, {\it Formal groups
and $L$-series}, Comment. Math. Helvetici {\bf 65} (1990) 318--333.

\bibitem[De]{De} O. Demchenko, {\it Formal groups over $p$-adic rings
of integers with small ramification and distinguished isogenies},
St.Petersburg Math. J. {\bf 14} (2003) 405--428.

\bibitem[DG]{DG} O. Demchenko, A. Gurevich, {\it Coordinate
systems on the moduli space of deformations of a $p$-divisible
group}, to appear

\bibitem[Ed]{Ed} B. Edixhoven, {\it N\'eron models and tame
ramification}, Comp. Math. {\bf 81} (1992) 291--306.

\bibitem[Ho1]{Ho1} T. Honda, {\it Formal groups and zeta-functions},
Osaka J. of Math. {\bf 5} (1968) 199--213.

\bibitem[Ho2]{Ho} T. Honda, {\it On the theory of commutative
formal groups}, J. of Math. Soc. Japan {\bf 22} (1970) 213--246.

\bibitem[Ib]{Ib} T. Ibukiyama, {\it Formal groups and $L$-functions},
J. Fac. Sci. Univ. Tokyo Sect. IA Math. {\bf 21} (1974) 249--262.

\bibitem[Na]{Na} E. Nart, {\it Formal group laws for certain
formal groups arising from modular curves}, Comp. Math. {\bf 85}
(1993) 109--119.

\bibitem[NX]{NX} E. Nart, X. Xarles, {\it Additive reduction
of algebraic tori}, Arch. Math. {\bf 57} (1991) 460--466.

\bibitem[Wa]{Wa} W. Waterhouse, {\it Introduction to affine
group schemes}, Graduate texts in mathematics,
Springer-Verlag, New-York Heidelberg Berlin, 1979.

\end{thebibliography}
\end{document}